\title{Minimizing Closed Geodesics on Polygons and Disks}
\author{Ian Adelstein, Arthur Azvolinsky, Joshua Hinman, Alexander Schlesinger}
\date{}
\address{Department of Mathematics, Yale University \\ New Haven, CT 06520 United States}
\newtheorem{theo}{Theorem}[section]
\newtheorem{defi}[theo]{Definition}
\newtheorem{rema}[theo]{Remark}
\newtheorem{form}[theo]{Formula}
\newtheorem{prop}[theo]{Proposition}
\newtheorem{lemma}[theo]{Lemma}
\newtheorem{conjecture}[theo]{Conjecture}
\newtheorem{Corollary}[theo]{Corollary}
\newtheorem{example}[theo]{Example}
\newcommand{\mychi}{\raisebox{0pt}[1ex][1ex]{$\chi$}}
\begin{document}
\begin{abstract}
In this paper we study 1/k geodesics, those closed geodesics that minimize on all subintervals of length $L/k$, where $L$ is the length of the geodesic. We develop new techniques to study the minimizing properties of these curves on doubled polygons, and demonstrate a sequence of doubled polygons whose closed geodesics exhibit unbounded minimizing properties. We also compute the length of the shortest closed geodesic on doubled odd-gons and show that this length approaches $4\cdot$diameter.
\end{abstract}
\maketitle
\section{Introduction}
A geodesic in a metric space $\mathcal{M}$ is a locally length-minimizing curve.  Some metric spaces admit closed geodesics, $\gamma \colon S^1 \to \mathcal{M}$, which can be viewed as maps from the circle $S^1=[0,2\pi]$ into the metric space. In \cite{sor}, Sormani defined a $\textit{1/k geodesic}$ to be a constant speed closed geodesic such that $d(\gamma(t), \gamma(t+2\pi/k)) = L/k$ for every $t \in S^1$, where $L$ is the length the geodesic. Intuitively, a $1/k$ geodesic realizes the distance between points which are $1/k$ of the geodesic apart. As a first example note that the great circles on the round sphere are $1/2$ geodesics. 

By compactness of $S^1$ and the local length minimization property, every closed geodesic is a $1/k$ geodesic for some integer $k\geq 2$. It is clear that if $\gamma$ is a $1/k_0$ geodesic, then it is also a $1/k$ geodesic for all $k \geq k_0$. In order to specify the optimal value of $k$ for a given closed geodesic, we use the notion of $\textit{minimizing index}$, defined by Sormani, where $minind(\gamma)$ is the smallest integer $k \geq2$ such that $\gamma$ is a $1/k$ geodesic. The concept of minimizing index extends to a metric space $\mathcal{M}$, by defining $minind(\mathcal{M})$ to be the smallest integer $k \geq2$ such that the space admits a $1/k$ geodesic.

In this paper we study $minind(X_n)$, where $X_n$ is the doubled regular $n$-gon:~the metric space obtained by gluing two $n$-gons along their common boundary. We also study the minimizing index of closed geodesics on the doubled disk, the Gromov-Hausdorff limit of the doubled regular $n$-gons, which we denote by $X_\infty$. On these spaces, closed geodesics can be viewed as the concatenation of line segments between edge points, alternating between the front and back faces of the space. 
We exploit the symmetry of the disk to prove our first result:

\begin{theo}
\label{double disk minind}
For all closed geodesics $\gamma \in X_\infty$, $minind(\gamma)$ is the number of segments of $\gamma$.
\end{theo}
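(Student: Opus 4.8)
The plan is to pass to a concrete model of the closed geodesics on $X_\infty$ and then bound $minind(\gamma)$ from above and below, where $n$ denotes the number of segments of $\gamma$. First I would record two structural facts. Since $X_\infty$ is flat off the seam $\partial D$ and geodesics remain $C^1$ across it, a geodesic meeting the seam transversally passes to the other face obeying the billiard reflection law; consequently $\gamma$ is a (possibly multiply traversed) regular star polygon inscribed in $\partial D$, all of whose $n$ chords have the common length $\ell := L/n$, consecutive chords are mirror images across the radius through their shared vertex, and $n$ is even. Also, the folding map $\pi\colon X_\infty\to\overline D$ onto one face is $1$-Lipschitz and fixes the seam, so $d(p,q)=|p-q|$ for $p,q$ on a common face, while for $p$ on the front face and $q$ on the back face $d(p,q)=\min_{c\in\partial D}\bigl(|p-c|+|c-q|\bigr)$ (additional seam crossings only lengthen a path).

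For the lower bound I would show $\gamma$ is not a $1/(n-1)$ geodesic; since a $1/k$ geodesic is a $1/(n-1)$ geodesic for every $k\le n-1$, this yields $minind(\gamma)\ge n$. The case $n=2$ is immediate, so assume $n\ge 4$; then the common chord angle $\Delta$ is not $\pi$. Take $t$ with $\gamma(t)=v$ a vertex. Then $q:=\gamma(t+L/(n-1))$ lies a distance $\ell/(n-1)$ past the next vertex $w$ along the following chord, and the sub-arc of $\gamma$ of length $L/(n-1)$ from $v$ to $q$ realizes $|v-w|+|w-q|=L/(n-1)$ while crossing the seam once, at $w$. Because $v$ lies on the seam, $d(v,q)=|v-q|$, and $v,w,q$ are not colinear as $\Delta\ne\pi$; hence $d(v,q)=|v-q|<|v-w|+|w-q|=L/(n-1)$, so the $1/(n-1)$ condition fails at $t$.

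For the upper bound I would show $\gamma$ is a $1/n$ geodesic, i.e.\ $d(\gamma(t),\gamma(t+\ell))=\ell$ for all $t$. The sub-arc from $p:=\gamma(t)$ to $q:=\gamma(t+\ell)$ lies on two consecutive chords $e,e'$ meeting at a vertex $v$, crosses the seam once at $v$, and gives $|p-v|+|v-q|=\ell$, hence $d(p,q)\le\ell$. By the distance formula it suffices to show $v$ minimizes $c\mapsto|p-c|+|c-q|$ over $\partial D$; equivalently, that the ellipse $E$ with foci $p,q$ through $v$ lies in $\overline D$. Here the symmetry of the disk enters. The billiard law at $v$ says the chords $vp,vq$ make equal angles with the tangent line to $\partial D$ at $v$, which is precisely the optical reflection law of $E$ at $v$, so $E$ and $\partial D$ are tangent at $v$; and combining the length-$\ell$ shift symmetries of $\gamma$ with a reflection reversing $\gamma$ reduces an arbitrary sub-arc to one in the one-parameter family interpolating between a full chord of $\gamma$ (trivially minimizing) and the sub-arc joining the midpoints of $e,e'$, where $E$ and $\partial D$ are both symmetric across $Ov$ and in fact osculate to second order at $v$.

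The main obstacle is upgrading this tangency at $v$ to the global statement $E\subseteq\overline D$ — equivalently, excluding a competing single-bounce billiard path from $p$ to $q$ of length $<\ell$ that runs around the far side of the disk. I expect this to be the technical heart, and where the symmetry of the disk does the real work: the relevant geometric input is that moving $p$ toward one endpoint of $e$ forces $q$ toward the shared vertex $v$, so $p$ and $q$ can never be driven to the far side simultaneously, and the two vertices of $\gamma$ adjacent to $v$ lie strictly outside $E$ throughout the family. Concretely I would either (i) prove that $s\mapsto\min_{c\in\partial D}\bigl(|a(s)-c|+|c-b(s)|\bigr)$ along this family is concave — it equals $\ell$ at the parameters of the full chord and of the midpoint arc and never exceeds $\ell$, so concavity forces it to be $\equiv\ell$ — or (ii) verify the trigonometric inequality $|a(s)-c|+|c-b(s)|\ge\ell$ directly after placing $v$ at $(1,0)$. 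Granting this, $d(p,q)=\ell$ for every sub-arc, so $minind(\gamma)\le n$, and together with the lower bound, $minind(\gamma)=n$.
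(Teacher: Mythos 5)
Your setup and lower bound are essentially the paper's: the classification of closed geodesics on $X_\infty$ as (possibly iterated) regular polygons and stars with equal chords is the paper's Proposition 3.2, and your failure-of-$1/(n-1)$ argument at a vertex is exactly the paper's Proposition 2.2 ($minind(\gamma)\ge per(\gamma)$), proved by shortcutting across the seam on the opposite face. One small slip there: for $n\ge 4$ the geodesic can be an iterated back-and-forth chord, in which case $v,w,q$ \emph{are} collinear (with $q$ between $v$ and $w$); the strict inequality $|v-q|<|v-w|+|w-q|$ still holds because $w\notin[v,q]$, so the conclusion survives, but your stated justification ``$\Delta\neq\pi$'' does not cover that case.

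The genuine gap is in the upper bound, and you have flagged it yourself: everything reduces to the global inequality $\min_{c\in\partial D}\left(|p-c|+|c-q|\right)\ge\ell$, i.e.\ to the ellipse with foci $p,q$ through $v$ lying inside $\overline{D}$, and tangency of that ellipse to $\partial D$ at $v$ is only a local statement. Neither of your proposed closings is executed, and option (i) is shaky: for each fixed $c$ the map $a\mapsto|p(a)-c|+|c-q(a)|$ is \emph{convex} (distance to a fixed point along a line is convex), so the pointwise minimum over $c$ carries no a priori concavity, and even the value $\ell$ at the midpoint configuration is itself unproved without further work. The paper closes the gap by classifying the critical points of $c\mapsto |Xc|+|cY|$ on the circle: with $k=|XO|=|YO|$ and the reflection law at a minimizer $Z$, the law of sines forces $\sin\zeta=\sin\eta$, so either $\zeta+\eta=\pi$, in which case $Z$ is $Q$ or its mirror image and $|XZ|+|ZY|=2\cos\theta$ exactly, or $\zeta=\eta$, in which case $Z$ lies on the bisector of $\angle XOY$ through $O$ and the law of cosines gives $|XZ|+|ZY|=2\sqrt{1+k^2-2k\sin\theta}$, whose square exceeds $(2\cos\theta)^2$ by $4(k-\sin\theta)^2\ge 0$. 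That computation --- in effect your option (ii), carried out --- is the actual content of the theorem and is what your proposal is missing.
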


The lowest possible minimizing index for any closed geodesic (and thus for any metric space) is two, as it is impossible for a segment longer than half the length of a closed geodesic to be minimizing: traversing the geodesic in the opposite direction always provides a shorter path. 
One can therefore ask whether a given metric space admits a half geodesic ($1/2$ geodesic).
It was shown in \cite[Proposition 2.1]{ade} that $X_{2n}$ admits precisely $n$ half geodesics, and that $X_{2n+1}$ fails to admit half geodesics.
The $n$ half geodesics on $X_{2n}$ are precisely the perpendicular bisectors of parallel edges, passing through the center of each face of the polygon. 

Given that $X_{2n+1}$ fails to admit a half geodesic it is natural to consider  the smallest integer $k>2$ such that $X_{2n+1}$ admits a 1/k geodesic, i.e.~$minind(X_{2n+1})$.
This is studied in \cite{fong} by first defining the $\textit{over-under}$ curve, a closed geodesic on $X_n$ that passes through the midpoints of adjacent edges. These geodesics look like inscribed $n$-gons within $X_n$, which alternate between the front and back of the polygon (see Figure \ref{overunder}). 
Due to this alternating property, if $n$ is odd, the inner $n$-gon is traversed twice before closing up smoothly. 
The main result \cite[Theorem 2.4]{fong} shows that the minimizing index of the over-under curve on $X_n$ (for $n$ odd) is equal to $2n$.

\begin{figure}
\includegraphics[scale=.45]{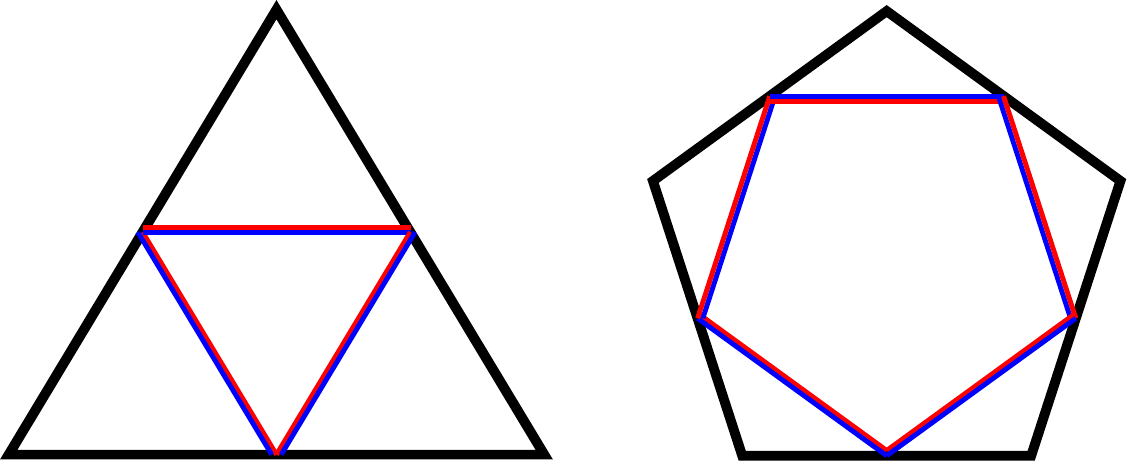}
\caption{The over-under curves on $X_3$ and $X_5$}
\label{overunder}
\end{figure}

\begin{figure}
\includegraphics[scale=.6]{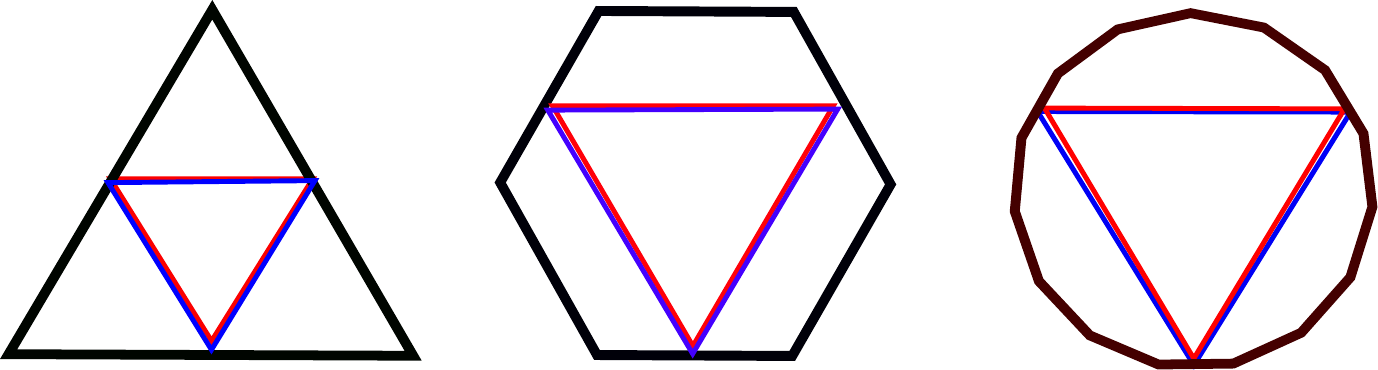}
\caption{The inscribed triangle geodesics on $X_{3k}$, where $k = 1,2,5$}
\label{3k}
\end{figure}

For $p$ an odd prime, the over-under curve gives an upper bound of $2p$ on the minimizing index of $X_p$. However, when $n$ is odd but not prime, we can write $n=kp$, for $p$ prime and $k$ odd. 
By connecting every $k$-th midpoint, we can inscribe a $p$-gon in $X_n$, again traversed twice before closing smoothly (see Figure $\ref{3k}$). 
We apply our Theorem~\ref{double disk minind} to show that this geodesic has minimizing index $2p$, thus extending \cite[Theorem 2.4]{fong} to the setting $n=kp$:
 
\begin{Corollary}
\label{divisor}
For all $n$, $minind(X_n) \leq 2d$, where $d$ is the smallest prime divisor of $n$.
\end{Corollary}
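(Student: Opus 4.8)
The plan is to exhibit, for each $n$, an explicit closed geodesic on $X_n$ whose minimizing index is at most $2d$, where $d$ is the smallest prime divisor of $n$. The case $n$ even is immediate: then $d=2$, and by \cite[Proposition 2.1]{ade} $X_n$ admits a half geodesic, so $minind(X_n)=2\le 4=2d$. So assume $n$ is odd, whence $d\ge 3$ is an odd prime; write $n=dm$ with $m=n/d$ odd.

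\textbf{Step 1: construct the geodesic.} Label the edge midpoints of the regular $n$-gon cyclically as $M_0,\dots,M_{n-1}$, and let $\gamma_d$ be the closed curve joining $M_{jm}$ to $M_{(j+1)m}$ (indices mod $n$) by chords, placed alternately on the front and back faces of $X_n$ (this is the inscribed $d$-gon of Figure \ref{3k}). Since $m\mid n$, the indices $0,m,2m,\dots$ cycle through exactly $d$ distinct midpoints, and since $d$ is odd the front/back alternation forces $\gamma_d$ to traverse this inscribed regular $d$-gon twice before closing up smoothly; hence $\gamma_d$ has exactly $2d$ segments. Reflection of the $n$-gon through the line from its center to $M_{jm}$ interchanges the two chords meeting at $M_{jm}$, so incidence and reflection angles agree there, and $\gamma_d$ is a genuine closed geodesic. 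All segments are congruent chords subtending central angle $2\pi m/n=2\pi/d$, so $\gamma_d$ has constant speed and each segment has length $\ell:=L/(2d)$, where $L=\mathrm{length}(\gamma_d)$.

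\textbf{Step 2: reduce to one sub-arc.} To get $minind(\gamma_d)\le 2d$ I must show $d_{X_n}(\gamma_d(t),\gamma_d(t+\pi/d))=\ell$ for all $t$; note that $\gamma_d(t+\pi/d)$ is exactly one segment ahead of $\gamma_d(t)$. The stabilizer of $\gamma_d$ in $\mathrm{Isom}(X_n)$ contains the rotations cycling the chords together with the face-swap deck transformation, and these move $t$ through a single fundamental interval; so it suffices to treat the sub-arcs that begin within one chord $c_1$ and end within the next chord $c_2$. Such an arc runs from a point $A\in c_1$ to a point $B\in c_2$ on the opposite face, with $|AM|+|MB|=\ell$ where $M$ is the midpoint between them (the extreme case $A=M_0$, $B=M_m$ being the chord $c_1$ itself). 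Because $\gamma_d$ is geodesic at $M$, unfolding $X_n$ across the edge $e\ni M$ makes $A,M,B$ collinear with $|AB|=\ell$, and the claim becomes: this unfolded segment is a shortest path, i.e.\ $d_{X_n}(A,B)=\ell$.

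\textbf{Step 3: the minimizing estimate (the crux), and conclusion.} Here I would invoke the argument behind Theorem \ref{double disk minind}: the symmetry-and-unfolding analysis that proves it on the doubled disk adapts to the doubled regular $n$-gon. Any path from $A$ to $B$ crosses the boundary polygon an odd number of times, and unfolding it along its sequence of edge crossings turns it into a Euclidean broken path from $A$ to a congruent copy of $B$, hence of length at least the corresponding Euclidean distance; one then checks that, since the chord subtends a central angle $2\pi/d\le 2\pi/3<\pi$ and the $n$-gon is convex, no crossing sequence beats the single crossing of $e$. Granting this, every length-$\ell$ sub-arc of $\gamma_d$ minimizes, so $\gamma_d$ is a $1/(2d)$ geodesic and $minind(X_n)\le minind(\gamma_d)\le 2d$. (The reverse bound $minind(\gamma_d)=2d$, which recovers and extends \cite[Theorem 2.4]{fong}, would follow by noting that a sub-arc of length $L/(2d-1)$, being longer than a single chord, is shortcut by going around; but only the inequality is needed for the Corollary.) I expect Step 3 to be the main obstacle: ruling out all competing crossing sequences is exactly the geometric bookkeeping at the heart of Theorem \ref{double disk minind}, and carrying it from the fully symmetric disk over to the $n$-gon is where the real work lies.
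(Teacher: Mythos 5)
Your Step 1 and Step 2 reproduce the paper's setup exactly: the inscribed regular $d$-gon through every $(n/d)$-th edge midpoint, traversed twice for period $2d$ (Figure \ref{3k}), and the reduction by symmetry to showing that a single sub-arc $\overline{XQ}\cup\overline{QY}$ of length $\ell=L/(2d)$, with $X,Y$ on consecutive chords meeting at a midpoint $Q$, is length-minimizing. The problem is Step 3, which you do not actually carry out. You propose to unfold an arbitrary competing path along its sequence of boundary crossings and then to ``check that no crossing sequence beats the single crossing of $e$,'' but you never perform that check; you write ``granting this'' and you yourself flag it as the main obstacle. As written, the corollary has been reduced to an unproved claim about all unfolded broken paths in the $n$-gon, so there is a genuine gap precisely at the step that carries all the content.

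The paper closes this gap (Lemma \ref{inscribed polygons and stars}) without any crossing-sequence bookkeeping, by a comparison with the \emph{inscribed circle} rather than a re-derivation of the disk argument on the polygon. Since your geodesic meets the polygon boundary only at edge midpoints, it is verbatim a closed geodesic on the doubled incircle disk $X_\infty$ (Proposition \ref{polygons and stars}), and Theorem \ref{double disk minind} --- already proved --- says it is a $1/(2d)$ geodesic there, i.e.\ $XW+WY\ge XQ+QY$ for every point $W$ on the incircle $o$. A competing path on $X_n$ from $X$ to $Y$ must cross the polygon boundary at some point $Z$, and every boundary point of the $n$-gon lies on or outside $o$. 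If $Z\in o$ you are done; otherwise let $W$ be the intersection of $\overline{XZ}$ with $o$, and then
\[XZ+ZY \;\ge\; XW+WY \;\ge\; XQ+QY\]
by the triangle inequality applied to $\triangle WZY$. One line thus replaces the entire enumeration of crossing sequences you were anticipating. If you substitute this incircle comparison for your Step 3, the rest of your argument goes through; your proposed route could in principle also be completed, but it redoes from scratch, on a less symmetric domain, exactly the work that Theorem \ref{double disk minind} was designed to let you cite.
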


This corollary gives an upper bound on the minimizing index of the doubled regular $n$-gon, which is conjectured to be tight for odd $n$. Whether this bound is tight remains an open problem, but we are able to provide the following limiting lower bound:
\begin{theo}
\label{infinity}
For $p$ prime, as $p \to\infty$, the minimizing index of $X_p$ tends towards infinity. 
\end{theo}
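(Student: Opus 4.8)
The plan is to argue by contradiction, combining a soft compactness argument with one genuinely hard structural input about billiard trajectories in the regular $p$-gon. Suppose the conclusion fails: then there is an integer $N$ and a sequence of primes $p_i\to\infty$ with $\mathrm{minind}(X_{p_i})\le N$, so each $X_{p_i}$ carries a closed geodesic $\gamma_i$ which is a $1/N$ geodesic. Since every subarc of $\gamma_i$ of length $L(\gamma_i)/N$ realizes the distance between its endpoints, each such subarc has length at most $\mathrm{diam}(X_{p_i})$; as the doubled regular $p$-gons, scaled to sit in a fixed circle, have uniformly bounded diameter, we get $L(\gamma_i)\le N\cdot\mathrm{diam}(X_{p_i})\le C_0N$. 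Thus the $\gamma_i$ have uniformly bounded length.

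Next I would bound the number of segments of $\gamma_i$. The key point is a lemma asserting an absolute constant $M$ such that, for $p$ large, every minimizing geodesic segment in $X_p$ crosses at most $M$ edges. This should follow from the Gromov--Hausdorff convergence $X_p\to X_\infty$ together with the fact underlying Theorem~\ref{double disk minind} that minimizing geodesics in the flat doubled disk cross at most one edge: points on a common face realize their planar distance by convexity, and points on opposite faces realize theirs through a single boundary point, so on the nearly flat space $X_p$ a minimizing segment cannot wrap past a bounded number of edges. Granting this, cover $\gamma_i$ by $N$ consecutive subarcs of length $L(\gamma_i)/N$; each is minimizing, hence has at most $M$ edge-crossings, so $\gamma_i$ has at most $S:=NM$ segments, a bound independent of $i$.

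The contradiction then must come from showing that, once $p$ is a sufficiently large prime, $X_p$ admits \emph{no} closed geodesic with at most $S$ segments --- a fact the Gromov--Hausdorff limit alone does not supply, since $X_\infty$ has closed geodesics with any even number of segments. A closed geodesic on $X_p$ unfolds to a closed billiard trajectory in the regular $p$-gon, traversed once or (when its bounce-number is odd) twice, whose number of bounces is comparable to its number of segments; so it suffices to show the regular $p$-gon has no periodic billiard trajectory with a bounded number of bounces for $p$ a large prime. Primality enters exactly as in the absence of half geodesics for $X_{2n+1}$ \cite[Proposition 2.1]{ade}: closing up forces the composition of reflections in the crossed edges to be a translation, hence an integer relation $\sum_i\pm k_i\equiv 0\pmod p$ among the indices $k_i\in\mathbb Z/p\mathbb Z$ of the crossed edges; and, more restrictively, the trajectory must close up in \emph{position}, which pins the bounce angles (each a multiple of $2\pi/p$) into near-resonance with an inscribed star-polygon configuration $\{r/q\}$ on the circle with $r\le S$ --- and realizing such a configuration exactly in the regular $p$-gon forces a divisibility $r\mid p$ (generalizing the parallel-edges obstruction, the case $r=2$), impossible for a prime $p>S$. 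Making "comparable number of bounces" precise, and proving non-existence of low-complexity periodic trajectories using the rational (Veech) structure of the regular $p$-gon, is the heart of the argument and the step I expect to be the main obstacle; the length bound and segment-count bound are routine by comparison.

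Equivalently, one may package the estimate as $\mathrm{minind}(X_p)\ge s_{\min}(X_p)/M$, where $s_{\min}(X_p)$ is the least number of segments of a closed geodesic on $X_p$: the covering argument shows any $1/k$ geodesic has at most $kM$ segments, whence $k\ge s_{\min}(X_p)/M$, and Theorem~\ref{infinity} reduces to the single assertion that $s_{\min}(X_p)\to\infty$ through the primes, i.e.\ that the shortest-period billiard orbit of the regular $p$-gon has period tending to infinity. Note that primality is essential --- for even $n$, Corollary~\ref{divisor} gives $\mathrm{minind}(X_n)\le 4$ --- and that $p\to\infty$ is used twice: to make $X_p$ nearly flat for the segment bound, and to push the divisibility obstruction past the fixed bound $S$.
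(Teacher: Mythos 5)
Your reduction fails at its final and central step. The first half of your plan is fine and matches the paper: a minimizing subarc cannot contain more than one edge point in its interior (Proposition~\ref{minindperpf}), so $minind(\gamma)\geq per(\gamma)$ and a bounded minimizing index forces a bounded number of segments. But you then propose to reach a contradiction from the assertion that for $p$ a large prime $X_p$ admits \emph{no} closed geodesic with a bounded number of segments --- equivalently, that $s_{\min}(X_p)\to\infty$ through the primes. That assertion is false. The V-shaped geodesics of Section~\ref{vshapesec} are closed geodesics of period $4$ on every doubled odd-gon $X_{2n+1}$, in particular on $X_p$ for every odd prime $p$, so $s_{\min}(X_p)=4$ identically and no divisibility or Veech-type obstruction can exclude low-complexity periodic trajectories. (Your heuristic ``$r\mid p$'' obstruction, were it correct, would have to rule out these period-$4$ curves as well, which it cannot.)

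The real difficulty --- and the content of the paper's proof --- is to show that bounded-period closed geodesics on $X_p$, although they exist, must have large minimizing index. After passing to a convergent subsequence of fixed period $n$ (Lemma~\ref{convergent}), the paper tracks the \emph{vertex ratios}, i.e.\ how each segment divides the edge it meets, and proves through an arithmetic analysis of the skip-number and vertex-ratio sequences (Proposition~\ref{lims} through Proposition~\ref{Lim0}) that some vertex ratio must tend to $0$ or $1$: the geodesics pass arbitrarily close to a vertex. Near a vertex one can cut the corner, so only a very short subarc through that point can be minimizing, and the minimizing index blows up even while the period stays equal to $n$ (Lemma~\ref{converge}); the V-shaped geodesics, whose minimizing index is at least $\tfrac{1}{2}\cot^{2}(\pi/2p)$ while their period stays $4$, are exactly the model for this phenomenon. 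Primality enters only at the very last step, to exclude the alternative scenario in which the geodesics stay uniformly away from every vertex: that scenario forces the skip-number sum $lp$ (with $l<n$) to be divisible by $n$, hence $n\mid p$, impossible for large primes. So primality rules out bounded-period geodesics that \emph{avoid vertices}, not bounded-period geodesics altogether --- a distinction your plan misses, and without which there is no route to a contradiction.
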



Recall from \cite[Proposition 2.1]{ade} that the odd-gons fail to admit half geodesics.
The sequence of odd-gons converges in the Gromov-Hausdorff sense to the double disk, which does admit half geodesics (any pair of diameters meeting at the boundary circle).  This sequence shows that minimizing index need not converge in the Gromov-Hausdorff limit, but it was unknown previously how bad these differences could be.  Theorem \ref{infinity} shows that minimizing index can diverge in the worst possible way.
We state this result formally in the following corollary:
\begin{Corollary}
\label{coro}
Let $\mathcal{M}_i$ converge to $\mathcal{M}$ in the Gromov-Hausdorff sense.  The minimzing indices of the $\mathcal{M}_i$ need not converge to the minizing index of $\mathcal{M}$.  In particular, from Theorem \ref{infinity} we have that $minind(X_p) \rightarrow \infty$, whereas $$minind\left(\lim_{p\to\infty} X_p\right) = minind(X_\infty) = 2.$$
\end{Corollary}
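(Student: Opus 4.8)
The plan is to obtain the corollary by pairing Theorem~\ref{infinity} with a direct computation of $minind(X_\infty)$; since the Gromov--Hausdorff convergence $X_n\to X_\infty$ is already in hand (and restricting to the prime subsequence $X_p\to X_\infty$ changes nothing), the only real content is to evaluate the two minimizing indices appearing in the displayed equation and to observe that they are as far apart as possible.

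The lower-bound half is immediate: Theorem~\ref{infinity} says precisely that for every $N$ there is a prime $p_0$ with $minind(X_p)\ge N$ for all primes $p\ge p_0$, so $minind(X_p)\to\infty$. Nothing further is needed here.

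For the other half I would show $minind(X_\infty)=2$. Recall that no metric space can have minimizing index below $2$, since a subarc longer than half a closed geodesic is never minimizing (traverse the curve the other way), so it suffices to produce a half geodesic on the double disk. I would take a diameter of the underlying disk and let $\gamma$ traverse it once on the front face and once on the back face, reversing direction at each of its two endpoints on the boundary circle. Unfolding the double near a boundary point to its flat local model shows that the turning condition there is exactly billiard reflection off the tangent line to the circle, which for a diameter is automatic; hence $\gamma$ is a genuine closed geodesic, built from exactly $2$ segments. Theorem~\ref{double disk minind} then yields $minind(\gamma)=2$, whence $minind(X_\infty)=2$.

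Combining the two computations gives $minind(X_p)\to\infty\neq 2=minind(X_\infty)$, so the minimizing indices of the $X_p$ do not converge to that of their Gromov--Hausdorff limit, which is the statement of the corollary. I do not expect a serious obstacle; the one point meriting care is checking that the doubled diameter is a geodesic and not merely a closed curve, i.e.\ that the reflection condition at the boundary is the correct local geodesic condition on the double --- but this is the same local analysis that underpins Theorem~\ref{double disk minind}, so no new argument is required.
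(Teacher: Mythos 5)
Your proposal is correct and follows essentially the same route as the paper, which also derives the corollary by combining Theorem~\ref{infinity} with the observation that the doubled diameter (one copy on each face, meeting at two boundary points) is a period-$2$ closed geodesic on $X_\infty$ --- the degenerate case of Proposition~\ref{polygons and stars} --- so that Theorem~\ref{double disk minind} gives $minind(X_\infty)=2$. Your care about the reflection condition at the boundary is exactly Lemma~\ref{angle bisector} applied in the collinear case, so no new argument is needed.
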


Sormani initially defined $1/k$ geodesics to study convergence of the length spectra of sequences of metric spaces. 
She notes that the length spectrum of a metric space $\mathcal{M}$, defined to be the set of lengths of closed geodesics in $\mathcal{M}$, is not continuous under Gromov-Hausdorff convergence; even if $\mathcal{M}_i$ converge to $\mathcal{M}$ in the Gromov-Hausdorff sense, there could be sequences of geodesics in the $\mathcal{M}_i$ that do not converge to a geodesic on $\mathcal{M}$, c.f.~\cite[Example 7.2]{sor}.

This convergence issue is resolved if we instead consider the $1/k$ length spectrum, the set of lengths of $1/k$ geodesics. Sormani's main result \cite[Theorem 7.1]{sor} states that if the $\mathcal{M}_i$ converge to $\mathcal{M}$ in the Gromov-Hausdorff sense, then the $1/k$ length spectra of the $\mathcal{M}_i$ converge to a subset of the $1/k$ length spectrum of $\mathcal{M}$ union $\{0\}$.
An immediate corollary \cite[Corollary 7.2]{sor} is that if a sequence of geodesics $\gamma_i \in \mathcal{M}_i$ have lengths converging to a non-zero value that is not in the length spectrum of $\mathcal{M}$, then the minimizing indices of the $\gamma_i$ grow without bound. 

We show by example that the converse to this corollary does not hold. Details for this example are included in Section~\ref{vshapesec}, and an illustration can be found in Figure~\ref{vshape}.
\begin{example}
\label{vshapeEx}
There exists a sequence of V-shaped geodesics on the doubled regular odd-gons whose minimizing indices grow without bound, yet which converge to an iterated diameter geodesic on the double disk with minimizing index 4. 
\end{example}

\begin{figure}
\includegraphics[scale=0.6]{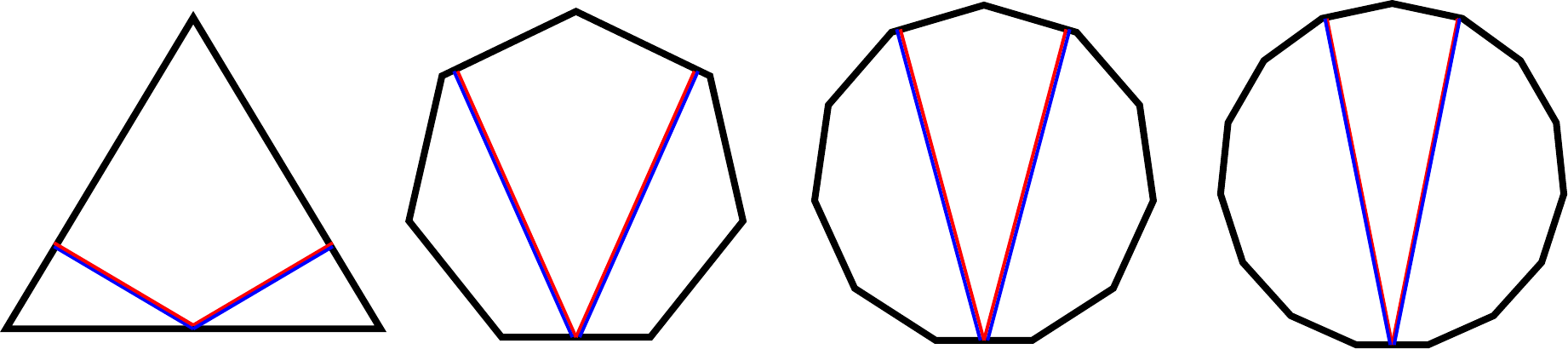}
\caption{A sequence of period 4 curves on $X_n$ for $n=3,7,11,15$}
\label{vshape}
\end{figure}

These V-shaped geodesics on the doubled odd-gons are of interest in terms of bounds on $L(S^2,g)$, the length of the shortest closed geodesic on the Riemannian 2-sphere. The best known bound in terms of the diameter is $L\leq 4$diam due to Nabutovsky and Rotman \cite{rotman}, and independently Sabourau \cite{sab}. The naive bound of $L\leq 2$diam does not hold as is demonstrated by a Zoll sphere with $L>2$diam \cite{croke}. The V-shaped geodesics have the following property:

\begin{theo}\label{vshapethm}
The V-shaped geodesics are the unique shortest closed geodesics on $X_{2n+1}$, and have length approaching $4diam(X_{2n+1})$. 
\end{theo}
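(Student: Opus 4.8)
The plan is to treat the asymptotic-length claim and the minimality claim in turn, both resting on an explicit model of the V-shaped geodesic.

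\emph{Setup.} Realize $X_{2n+1}$ as two copies of the regular $(2n+1)$-gon $P$ glued along $\partial P$; a closed geodesic is then a cyclically ordered list of chords of $P$ meeting $\partial P$ with the billiard reflection law and avoiding the vertices, and the number of chords is even, since each boundary crossing interchanges the two sheets. The V-shaped geodesic $V_n$ is the explicit period-four closed geodesic of Section~\ref{vshapesec}: I would begin by writing its four reflection points in terms of $n$, verifying the reflection law at each (a finite trigonometric check) and that they avoid the vertices, and reading off $\ell_n:=\operatorname{len}(V_n)$ in closed form. I would also record that these curves come in a one-parameter cylinder, and a full dihedral orbit of such cylinders, all of the common length $\ell_n$, so that ``the V-shaped geodesics'' is genuinely a family.

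\emph{Diameter and asymptotics.} The nearest-point retraction $X_{2n+1}\to P$ onto a single sheet is $1$-Lipschitz, so $\operatorname{diam}(X_{2n+1})\le\operatorname{diam}(P)$; conversely two points of a single sheet are joined there by the straight chord between them, which is globally minimizing, so $\operatorname{diam}(X_{2n+1})=\operatorname{diam}(P)$, the longest diagonal of $P$, which tends to $2R$ with $R$ the circumradius (the radius of the limiting disk $X_\infty$). Substituting the closed forms for $\ell_n$ and for $\operatorname{diam}(X_{2n+1})$ into the ratio and letting $n\to\infty$ gives $\ell_n/\operatorname{diam}(X_{2n+1})\to 4$, and the same estimates identify the Gromov--Hausdorff limit of $V_n$ with the iterated diameter geodesic of $X_\infty$ from Example~\ref{vshapeEx}.

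\emph{Minimality.} Let $\gamma$ be any closed geodesic on $X_{2n+1}$, with $m$ chords ($m$ even); the case $m=2$ is ruled out by \cite[Proposition~2.1]{ade}. The goal is $\operatorname{len}(\gamma)>\ell_n$ unless $\gamma$ is a dihedral image of a $V_n$. I would organize the remaining geodesics by the behaviour of the underlying billiard orbit in $P$, using throughout the arithmetic closure condition that the alternating sum of the cyclic sequence of edge labels of any closed billiard orbit of $P$ is $\equiv 0\bmod(2n+1)$. If the reflection point of $\gamma$ advances monotonically through $w\ge1$ turns around $\partial P$, then this alternating sum lies strictly between $0$ and $w(2n+1)$, forcing $w\ge2$; in particular every rotation-number-$1/m$ inscribed $m$-gon is excluded, precisely the family short enough to threaten $\ell_n$. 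A chord-versus-arc estimate then bounds $\operatorname{len}(\gamma)$ below by a definite fraction of $w\operatorname{per}(P)$, which for $w\ge2$ exceeds $\ell_n$; combined with a separate argument for orbits of odd combinatorial period, which run twice on $X_{2n+1}$ and so pick up a factor of two over an orbit at least as long as the shortest inscribed triangle of $P$, this disposes of all ``rotating'' $\gamma$. It then remains to show $V_n$ is the shortest ``back-and-forth'' orbit: a two-chord one would be a common perpendicular of two edges of $P$, of which an odd-gon has none, and longer ones are strictly longer. Assembling all of this into a single inequality valid for every $n$, with $V_n$ the unique equality case, is the crux; the step I expect to be the main obstacle is the uniform lower bound excluding short competitors of arbitrarily large combinatorial period, and, at the delicate end, showing that no back-and-forth orbit undercuts $V_n$ --- for which I would combine the edge-label constraint with sharp chord estimates and a careful accounting of the extra length forced by the double bounce near the vertex opposite an edge of $P$.
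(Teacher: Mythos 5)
Your proposal correctly frames the problem (even period, no period-two orbit on an odd-gon, a chord-versus-arc comparison against a circle), and your asymptotic computation of the length is essentially the paper's; but the two steps you yourself flag as ``the main obstacle'' are precisely where the content of the theorem lives, and they are not filled in. Two ideas are missing. First, the symmetry reduction: by the reflective symmetry of periodic billiard trajectories on odd-gons (Davis--Fuchs--Tabachnikov, extended from the pentagon), every closed geodesic can be translated to pass through the midpoint $M$ of an edge with a palindromic skip-number sequence; since $2n+1$ is odd, the perpendicular to that edge at $M$ hits the opposite \emph{vertex}, so the halfway point of the geodesic must again be $M$. This splits $\gamma$ into two $M$-to-$M$ arcs and reduces the theorem to showing each arc has length at least $l(g_1)+l(g_2)$, half the length of $V_n$; it also gives for free that each half winds at least once around the center, so the total winding is at least $2$ --- your ``alternating sum'' argument for $w\ge 2$ is not substantiated and, as stated, only addresses monotonically advancing orbits. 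Second, the actual lower bound: project the edge-crossing points radially onto the inscribed circle $C$ of radius $1$; if all intercepted arcs are minor, concavity of $\sin$ on $[0,\pi/2]$ gives $\sum_i 2\sin(\theta_i/2)\ge 4$, while $l(g_1)+l(g_2)=2+2\sin\theta<4$; if some arc is major, Proposition~\ref{skips} locates the \emph{first} segment with skip number exactly $n$, and one either applies the triangle inequality directly (when that segment is at least as long as $g_1$, with the equality analysis forcing $\gamma=V_n$) or replaces the complementary portion of the path by a single chord subtending a minor arc, reducing to the previous case. Your dichotomy ``rotating versus back-and-forth,'' with ``longer ones are strictly longer'' for the latter, assumes the conclusion for exactly the competitors that are hardest to exclude.

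A smaller point: a $1$-Lipschitz surjection $X_{2n+1}\to P$ yields $\mathrm{diam}(P)\le \mathrm{diam}(X_{2n+1})$, the reverse of the inequality you drew from it. This does not affect the limit $4\,\mathrm{diam}$ (with inradius normalized to $1$, every reasonable candidate for the diameter tends to $2$ while the length tends to $8$), but identifying $\mathrm{diam}(X_{2n+1})$ with the longest diagonal of $P$ requires a genuine upper-bound argument for pairs of points on opposite faces.
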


It would appear as though the doubled odd-gons realize the best known diameter upper bound for the length of shortest closed geodesic on the 2-sphere. However, the upper bound of $4$diam given in \cite{rotman} and \cite{sab} is only for smooth Riemannian metrics, whereas the doubled polygons are metric spaces with cone points (corners). One could try to extend the $4$diam bound to an appropriate setting, but it is already known that there exist doubled triangles with angles close to 30-60-90 which have shortest closed geodesic of arbitrary length \cite{schwartz}. One could hope to smooth the metric on the odd-gons to achieve an $L(S^2, g)$ close to $4$diam, but the naive smoothing collapses the curve into an iterate of a closed geodesic with length bounded above by $2$diam.

We now provide an overview of the paper.
Section~\ref{disk} of this paper will be dedicated to proving Theorem~\ref{double disk minind}. We will provide a classification of the minimizing index for closed geodesics on the doubled disk using purely geometric arguments. Corollary~\ref{divisor} then shows that $minind(X_n) \leq 2d$, where $d$ is the smallest prime divisor of $n$.

In Section~\ref{limit} we prove Theorem~\ref{infinity}, providing a limiting lower bound on the minimizing index of $X_p$. In the proof we develop new ways to describe geodesic paths on the doubled n-gons:~the skip number, which tracks the number of vertices between consecutive edge points on a geodesic, and the vertex ratio, which measures how close to a vertex the geodesic hits an edge.
The sequences of skip numbers and vertex ratios across the path of a geodesic reveals surprising arithmetic properties, which are used to prove the theorem. Section~\ref{vshapesec} explores the V-shaped geodesics on the doubled odd-gons and their various properties. 

In addition to \cite{ade, fong, sor} see also \cite{ade2, epstein, wkh} for more on 1/k geodesics.

\section{General Properties of Geodesics on Doubled Polygons and Disks}\label{general}
An important property of geodesics on $X_n$ is that the angle of incidence equals the angle of reflection whenever the geodesic intersects an edge of the doubled polygon.
This implies that the geodesics we study are essentially signed billiards paths on regular $n$-gons, c.f.~\cite{veech}.
In particular, our geodesics are composed of line segments from edge to edge. Based on this decomposition, we define the following useful property of these geodesics:
\begin{defi}
The $\textit{period}$ of a closed geodesic $\gamma$, denoted $per(\gamma)$, is the number of line segments it contains.
\end{defi}

Note that we can never have two consecutive segments on the same face of the doubled polygon. Such a curve would fail to be locally length minimizing at the edge point.
Indeed, in any neighborhood around such an edge point, there would be a point on each segment, and a straight-line path would be shorter than the path through the edge due to the triangle inequality.
Thus the segments must alternate faces and we conclude that the period of a closed geodesic must be even.

As an example, recall from the introduction that the over-under curves on $X_p$ have period $2p$, since they traverse the inscribed $p$-gon twice.
It is shown in \cite[Theorem 2.4]{fong} that the minimizing index of these over-under curves is $2p$. The following proposition partially codifies this example:

\begin{prop}
\label{minindperpf}
For all closed geodesics $\gamma$ on $X_n$, $minind(\gamma) \geq per(\gamma)$.
\end{prop}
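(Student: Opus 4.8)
The plan is to show the contrapositive in spirit: if $\gamma$ is a $1/k$ geodesic with $k < per(\gamma)$, we derive a contradiction with the local length-minimizing property. Write $L$ for the length of $\gamma$ and $p = per(\gamma)$, so $\gamma$ decomposes into $p$ edge-to-edge segments $s_1, \dots, s_p$ alternating faces. The key observation is that the distance condition $d(\gamma(t), \gamma(t+2\pi/k)) = L/k$ must hold \emph{for every} $t$, in particular for $t$ chosen so that $\gamma(t)$ is one of the edge points (breakpoints) of $\gamma$. If $k < p$, then the arc of $\gamma$ from $\gamma(t)$ to $\gamma(t + 2\pi/k)$ has length $L/k > L/p$, and I want to argue this arc cannot be a distance-realizing geodesic in $X_n$ because it is not even locally minimizing — it contains a breakpoint in its interior.

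First I would make precise the following subclaim: \emph{an arc of $\gamma$ that is a genuine minimizing geodesic between its endpoints can contain at most one breakpoint in its interior}, or more carefully, the relevant statement is that an arc realizing the distance $d(\gamma(t),\gamma(t+2\pi/k))$ must itself be a geodesic of $X_n$, hence locally length-minimizing, hence (by the triangle-inequality argument already given in the excerpt for why consecutive segments cannot lie on the same face) it cannot turn "the wrong way" at a breakpoint. The cleanest route: the subarc of length $L/k$ realizing the distance is one of possibly several geodesics between those two points, but the point is that the subarc of $\gamma$ between $\gamma(t)$ and $\gamma(t+2\pi/k)$, when $2\pi/k$ spans strictly more than one segment, passes through at least one interior breakpoint, and at such a breakpoint the curve is still locally minimizing \emph{as part of $\gamma$} — so that's fine — but the issue is global: I should instead count segments. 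Let me restructure.

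The real argument: choose $t$ so that $\gamma(t)$ is a breakpoint. Since the $p$ segments partition $S^1$ into $p$ arcs (not necessarily of equal length, but each a single segment), the point $\gamma(t + 2\pi/k)$ lies on some segment, and the arc from $\gamma(t)$ to $\gamma(t+2\pi/k)$ of $\gamma$-length $L/k$ contains between its endpoints a certain number of full segments. If $k < p$ I claim we can pick the breakpoint $\gamma(t)$ so that this forward arc contains at least one breakpoint strictly in its interior — indeed, summing over all $p$ breakpoints, the forward arcs of $\gamma$-length $L/k$ starting at breakpoints collectively cover $S^1$ with multiplicity equal to the average number of breakpoints they contain, which is $p \cdot (1/k) \cdot$(something) — more simply, since the forward arc from \emph{any} point has length $L/k$ and there are $p$ breakpoints on $\gamma$, by a pigeonhole/averaging argument some forward arc of length $L/k$ starting at a breakpoint contains at least $\lceil p/k \rceil \geq 2$ breakpoints counting the starting one, i.e.\ at least one in its interior. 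Wait — I need a breakpoint in the interior, so I want the arc to contain $\geq 2$ breakpoints total including endpoints isn't enough if both are endpoints; I'd argue: the $p$ forward arcs of length $L/k$ based at the $p$ breakpoints, each contains its initial breakpoint; total incidences of (breakpoint, arc containing it) is at least... hmm, each breakpoint lies in at least $\lceil$ (number of arcs of length $L/k$ that can cover a point) $\rceil$... Let me just say: since $k < p$, the map sending each breakpoint to the segment containing its forward-$L/k$ image is not a bijection onto a set avoiding interior hits, and a short combinatorial argument produces a breakpoint $\gamma(t)$ whose forward arc of length $L/k$ strictly contains another breakpoint. At that interior breakpoint, reflect the sub-arc across the edge (replace the billiard bounce by a straight crossing into the same face, using the "unfolding" of the doubled polygon): this yields a strictly shorter path between $\gamma(t)$ and $\gamma(t+2\pi/k)$ unless the two segments meeting at that breakpoint are collinear after unfolding — but they are not, since $\gamma$ genuinely bounces there (consecutive segments alternate faces, so the bounce is nontrivial whenever the incidence angle is nonzero, and a zero incidence angle would mean the "breakpoint" is a degenerate tangency, excluded). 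Hence $d(\gamma(t),\gamma(t+2\pi/k)) < L/k$, contradicting the $1/k$ condition. Therefore $k \geq p$, i.e.\ $minind(\gamma) \geq per(\gamma)$.

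The main obstacle I anticipate is the combinatorial step guaranteeing a breakpoint strictly interior to some forward arc of length $L/k$ — one must be careful because the segments have unequal lengths, so "forward $L/k$" need not align with breakpoints, and a forward arc could conceivably contain a breakpoint as an endpoint rather than an interior point. The clean fix is to perturb: if for \emph{every} breakpoint $\gamma(t)$ the forward arc of length $L/k$ has its other endpoint at a breakpoint and contains no breakpoint in between, then $L/k$ equals the length of a single segment for all segments, forcing all $p$ segments to have equal length $L/p$ and forcing $k = p$ — contradicting $k < p$. So in the case $k<p$ such a "good" breakpoint exists, and the unfolding/reflection argument applies. The remaining routine check is that the unfolded broken path at a genuine bounce is strictly longer than the straightened one, which is exactly the triangle-inequality observation already invoked in the paragraph of the excerpt ruling out two consecutive segments on the same face.
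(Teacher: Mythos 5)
Your overall architecture --- basing the candidate minimizing arcs of length $L/k$ at the breakpoints, showing that if $k < per(\gamma)$ some such arc must contain a breakpoint strictly in its interior, and then exhibiting a shortcut --- is exactly the skeleton of the paper's argument, and the combinatorial step is essentially fine (the clean version: if no forward arc of length $L/k$ based at a breakpoint contains an interior breakpoint, then $L/k \leq |s_i|$ for every segment $s_i$, and summing over the $p$ segments gives $pL/k \leq L$, i.e.\ $k \geq p$; this subsumes your case analysis). The genuine gap is in the geometric punchline. You locate the shortcut at the interior breakpoint $Q$ and justify it by claiming the two segments meeting at $Q$ are ``not collinear after unfolding'' because $\gamma$ ``genuinely bounces there.'' This is backwards: the reflection law satisfied by a geodesic is precisely the statement that consecutive segments become collinear under the billiard unfolding (the ``development''), so unfolding produces no local shortcut at $Q$ --- if it did, $\gamma$ would fail to be locally length-minimizing there and would not be a geodesic at all. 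A concrete counterexample to the local claim you are implicitly making: the half geodesics on $X_{2n}$ minimize on every subarc of length $L/2$, including subarcs containing a breakpoint in their interior, so a minimizing subarc of a geodesic can certainly pass through a breakpoint.

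The shortcut that actually exists is of a different nature and crucially uses that the arc's initial point $P = \gamma(t)$ is an edge point and therefore lies on \emph{both} faces. Let $Q$ be the first breakpoint strictly after $P$ inside the arc, with $\overline{PQ}$ on one face and the next segment of $\gamma$ leaving $Q$ on the other face, and pick $R$ on that next segment, strictly past $Q$ but still inside the arc. Because $P$ is a boundary point, the straight chord $\overline{PR}$ can be drawn entirely in the face containing $\overline{QR}$ (convexity keeps it inside the polygon), and in that face $P$, $Q$, $R$ form a nondegenerate triangle: the angle at $Q$ between the same-face copy of $\overline{QP}$ and $\overline{QR}$ is $\pi - 2\theta \neq \pi$, where $\theta \in (0,\pi/2]$ is the incidence angle. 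Hence $PR < PQ + QR$ strictly, and concatenating $\overline{PR}$ with the remainder of the arc gives $d(\gamma(t), \gamma(t+2\pi/k)) < L/k$, the desired contradiction. This is the triangle-inequality observation you cite at the end, but it applies only because $P$ sits on the boundary; for an arc whose endpoints are interior to the faces no such face-switching chord is available, which is exactly why the half geodesics survive. With this replacement your proof closes up and coincides with the paper's.
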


\begin{proof} 
This follows quickly from the fact that a geodesic on $X_n$ can not minimize on an open interval containing multiple edge points.
\end{proof}

While this lower bound is not always tight, it does provide a program (for small $p$) for demonstrating that $minind(X_p)=2p$. We already know that the over-under curves have minimizing index $2p$. By the proposition, we need only show that those closed geodesics with period less than $2p$ have minimizing index at least $2p$. 
Since for small $p$ there are relatively few geodesics with low period, one could hope to complete this classification program. Indeed, this program has been completed for the doubled triangle to show that $minind(X_3)=6$ (see \cite[Proposition 3.3]{fong} for details). However, as $p$ grows, the class of closed geodesics with period less than $2p$ grows, and this program becomes untenable. Recall that Theorem~\ref{infinity} only gives a limiting lower bound on the minimizing index of $X_p$, and that $minind(X_p)=2p$ is only conjecturally true.

Additionally, this proposition shows that if we have a sequence of geodesics with period tending to infinity, then their minimizing index tends to infinity as well. This argument will be used in the proof of Theorem~\ref{infinity}.

\section{Minimizing Properties of Closed Geodesics on the Doubled Disk}\label{disk}
Recall that the double disk $X_\infty$ consists of a ``front" and a ``back" unit disk, identified along their shared boundary circles. In this section, we will first classify all closed geodesics on the double disk (Proposition \ref{polygons and stars}) and then show that minimizing index equals period (Theorem~\ref{double disk minind}).

These results have two useful implications. First, there is a class of closed geodesics on double regular polygons which are identical in shape to those on the double disk (see Figures~\ref{Double disk figure 7} and \ref{Double disk figure 1}). Using our results on the double disk, we will be able to classify the minimizing indices of these geodesics on double polygons. This will give us a general upper bound on the minimizing index of $X_n$ (Corollary~\ref{divisor}). Second, we will use these results to study the relationship between minimizing index and Gromov-Hausdorff convergence, allowing us to prove Corollary~\ref{coro} in Section~\ref{limit}.


\begin{figure}
\includegraphics[scale=.6]{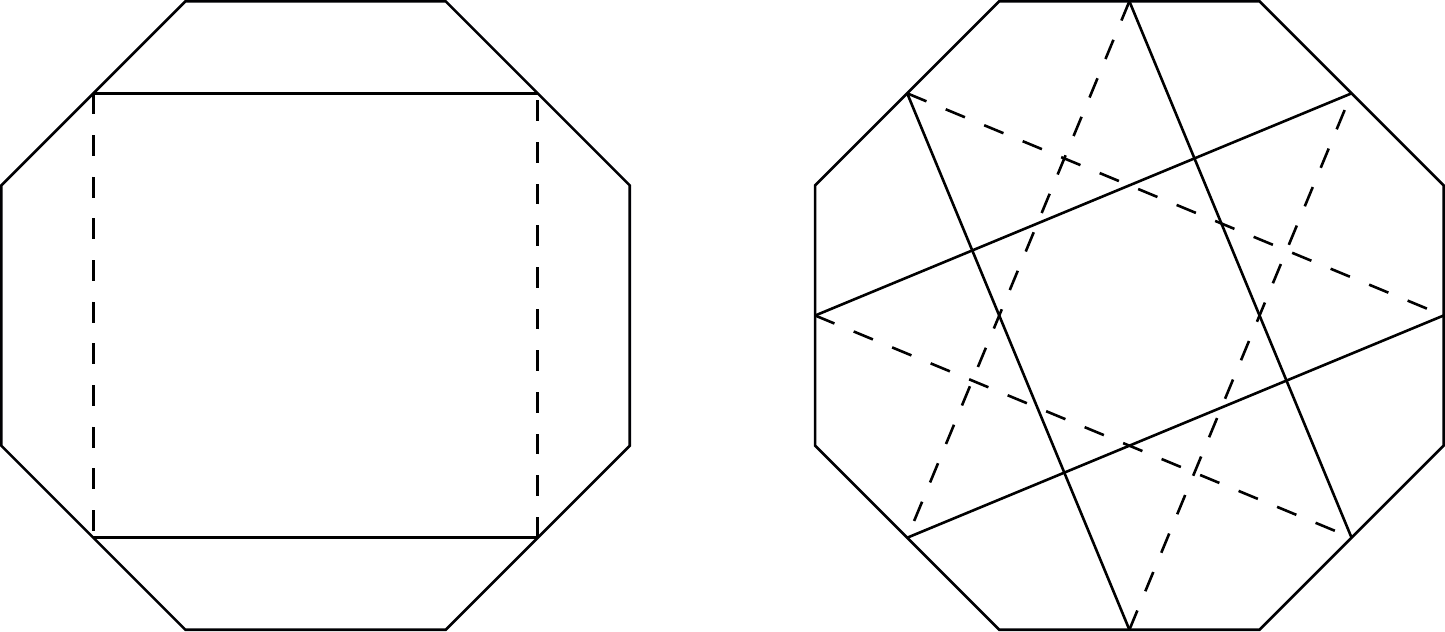}
\caption{A regular polygon geodesic and a regular star geodesic on $X_8$}
\label{Double disk figure 7}
\end{figure}

\begin{figure}
\includegraphics[scale=.6]{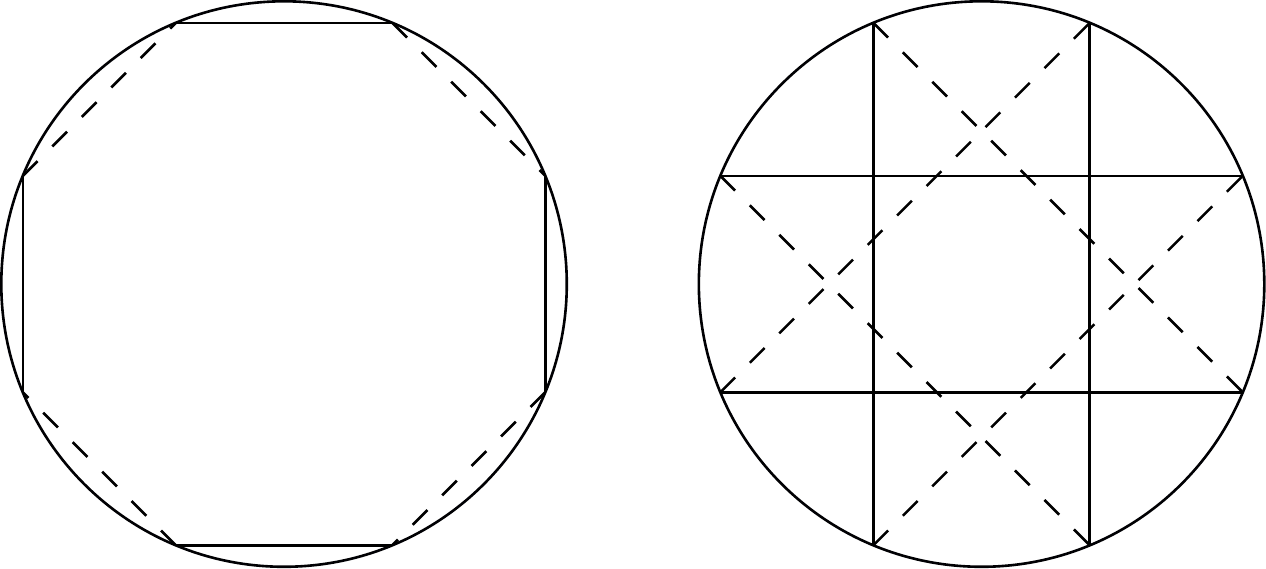}
\caption{A regular polygon geodesic and a regular star geodesic on $X_\infty$}
\label{Double disk figure 1}
\end{figure}

\subsection{Classification of double disk geodesics}\label{diskclassification}

We know that the only locally length-minimizing curve in Euclidean space is a straight line. Thus, every locally length-minimizing curve on $X_\infty$ is the concatenation of a series of line segments, which alternately lie on the front and back of $X_\infty$. Just like on the doubled polygons, the geodesics on $X_\infty$ have angle of incidence equal to angle of reflection (with the tangent line) whenever the geodesic intersects the boundary. We formalize this fact in the following lemma. See Figure~\ref{Double disk figure 4} for each of Lemma~\ref{angle bisector}, Proposition~\ref{polygons and stars}, and Theorem~\ref{double disk minind}. 


\begin{lemma}
\label{angle bisector}
Let $\gamma = \overline{AQ} \cup \overline{QB}$ be a curve on $X_\infty$, where $A$ is on the front, $B$ is on the back, and $Q$ is on the boundary circle. Let $O$ be the center of the disk. Then $\gamma$ is locally length-minimizing if and only if $m\angle{AQO} = m\angle{BQO}$.
\end{lemma}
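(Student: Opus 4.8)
The plan is to reduce the statement to a standard "billiard reflection on a circle" fact, and to handle the circle case by a reflection-through-the-boundary trick analogous to the planar case. Since $X_\infty$ is two copies of the unit disk glued along their common boundary circle, a curve $\gamma = \overline{AQ}\cup\overline{QB}$ with $A$ on the front, $B$ on the back, and $Q$ on the boundary is locally length-minimizing if and only if it cannot be shortened by a small perturbation of the crossing point $Q$ along the boundary circle. So I would fix $A$ and $B$ and consider the function $f(Q) = |AQ| + |QB|$ as $Q$ ranges over the boundary circle $\partial D$, and show that $f$ is critical at a point $Q_0$ precisely when the angle condition $m\angle AQ_0O = m\angle BQ_0O$ holds, and moreover that such a critical point is a local minimum.

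\textbf{Key steps.}

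First I would note that because $B$ lies on the back disk, the path $\overline{QB}$ crossing into the back face has the same length as the segment from $Q$ to the reflected point: more precisely, it is cleanest to work intrinsically and just compute the first variation of $f(Q) = |AQ| + |QB|$ directly. Parametrize the boundary circle as $Q(t)$ with unit-speed, so $Q'(t)$ is the unit tangent to $\partial D$ at $Q$. Then $\frac{d}{dt}|AQ| = \langle u_A, Q'(t)\rangle$ where $u_A$ is the unit vector from $A$ toward $Q$, and similarly $\frac{d}{dt}|QB| = \langle u_B, Q'(t)\rangle$ where $u_B$ is the unit vector from $B$ toward $Q$ (measured in the back face, but intrinsically this is the same computation). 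Hence $f'(t) = \langle u_A + u_B, Q'(t)\rangle$, and $Q$ is a critical point iff $u_A + u_B$ is orthogonal to the tangent $Q'(t)$, i.e. $u_A + u_B$ is parallel to the radial direction $\overline{OQ}$. This last condition says exactly that $u_A$ and $u_B$ make equal angles with the radius $\overline{QO}$, which is the claim $m\angle AQO = m\angle BQO$.

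Second, I would verify that a critical point of $f$ on the circle is in fact a local minimum, not a maximum or saddle — this is needed for the "only if" and "if" directions to match the local length-minimizing property rather than merely a critical-point property. For this I would either compute $f''$ at the critical point and check its sign (using that $A$, $B$ are in the interiors of their respective faces so $|AQ|, |QB|$ are bounded below, making the relevant curvature terms work out), or, more robustly, argue by a reflection/unfolding argument: unfold the back face across the tangent line at $Q_0$; the equal-angle condition makes $A$, $Q_0$, and the unfolded image of $B$ collinear, so $f(Q_0)$ equals a straight-line distance, while for nearby $Q$ the unfolded path is a genuine two-segment bent path, hence strictly longer by the triangle inequality. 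This unfolding argument simultaneously gives that the equal-angle point is the \emph{unique} local (indeed global) minimizer along the circle, and hence both implications of the "if and only if."

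\textbf{Main obstacle.}

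The subtle point — and the step I expect to require the most care — is the local-versus-global and the "unfolding across a curved boundary" issue: unlike the polygon case where the boundary is locally a straight line and reflection is exact, here the boundary is a circle, so unfolding the back face across the \emph{tangent line} at $Q_0$ is only a first-order approximation and does not literally send the back disk onto a Euclidean region containing the true geodesic. I would address this by keeping the argument infinitesimal (first-variation computation as above) for the characterization of criticality, and then separately justifying that criticality plus the second-order term (or a direct triangle-inequality estimate valid in a small enough neighborhood of $Q_0$, using that $A$ and $B$ are at positive distance from the boundary) upgrades "critical point" to "local minimum." The reader should be reminded that local length-minimization only needs to be checked against competitors that differ from $\gamma$ in a small neighborhood, so it genuinely suffices to analyze small perturbations of the single crossing point $Q$.
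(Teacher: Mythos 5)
Your first-variation computation correctly identifies the equal-angle condition with criticality of the crossing point, and this is the same reflection (Heron) principle the paper invokes. But your opening reduction, and the second step built on it, contain a genuine error. Local length-minimization of $\gamma$ is \emph{not} equivalent to $Q$ being a local minimum of $f(Z)=|AZ|+|ZB|$ with the endpoints $A,B$ held fixed: moving the crossing point while keeping $A$ and $B$ fixed is a global perturbation of the curve, whereas local minimization only requires that short subarcs $X\to Q\to Y$, with $X\in\overline{AQ}$ and $Y\in\overline{QB}$ near $Q$, be minimizing. Worse, the statement you set out to verify --- that the equal-angle critical point is a local \emph{minimum} of $f$ --- is false in exactly the situation the paper applies the lemma to. A direct computation gives, at the critical point,
\[
f''(t_0)=\cos\theta\left(\frac{\cos\theta}{|AQ|}+\frac{\cos\theta}{|QB|}-2\right),\qquad \theta=m\angle AQO,
\]
which is \emph{negative} when $|AQ|=|QB|=2\cos\theta$, i.e.\ when $A$ and $B$ are the far endpoints of the two chords, as in Proposition~\ref{polygons and stars}. (Extreme case: for the doubled diameter, $A$ and $B$ both sit at the antipode of $Q$, so $f(Z)=2|AZ|$ is \emph{maximized} at $Z=Q$; yet this curve is locally length-minimizing.) Consequently the unfolding/triangle-inequality argument, which would show $Q$ is a global minimizer of $f$ along the circle, cannot be made to work, and your parenthetical justification ``using that $|AQ|,|QB|$ are bounded below, making the curvature terms work out'' points in precisely the wrong direction: the second variation is positive only when these lengths are \emph{small}.

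The repair is to localize before varying. Fix $X\in\overline{AQ}$ and $Y\in\overline{QB}$ close to $Q$ and study $g(Z)=|XZ|+|ZY|$ for $Z$ on the boundary circle. The first variation is unchanged (the unit vector from $X$ toward $Q$ equals the one from $A$ toward $Q$), so when the angles differ one still produces a strictly shorter nearby path from $X$ to $Y$, proving non-minimality; and in the equal-angle case the same second-variation formula now reads $\cos\theta\bigl(\cos\theta/|XQ|+\cos\theta/|QY|-2\bigr)>0$ once $|XQ|,|QY|<\cos\theta$, after which one still has to upgrade the strict local minimum to a global minimum over the circle (crossing points far from $Q$ are excluded by the crude bound $|XZ|+|ZY|\ge |QZ|-|QX|$). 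This localized version is essentially the reflection argument the paper cites, and it is also why the paper's proof of Theorem~\ref{double disk minind} must work much harder for subcurves of full length $2\cos\theta$: there the competing paths through other boundary points can have exactly equal length.
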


\begin{proof}
Again, this is just the statement that angle of incidence equals angle of reflection (with the tangent line) for locally length minimizing curves on the doubled disk. This fact follows as an application of Heron's solution to shortest path problem.
\end{proof}

\begin{prop}
\label{polygons and stars}
Let $\gamma$ be a closed curve on $X_\infty$. Then $\gamma$ is a geodesic if and only if $\gamma$ has the shape of a regular polygon or regular star inscribed in the unit circle, whose edges alternate between the front and back of $X_\infty$.
\end{prop}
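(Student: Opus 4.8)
The plan is to use Lemma~\ref{angle bisector} locally at every boundary intersection point and then bootstrap that local angle condition into a global rigidity statement. First I would set up notation: write $\gamma$ as a cyclic concatenation of line segments $\overline{Q_0Q_1}, \overline{Q_1Q_2}, \dots, \overline{Q_{m-1}Q_m}$ with $Q_m = Q_0$, where the $Q_i$ lie on the boundary circle (the discussion before Lemma~\ref{angle bisector} already justifies that a geodesic decomposes this way, with segments alternating front/back). Let $O$ be the center. The key observation is that the chord $\overline{Q_iQ_{i+1}}$ subtends a central angle $2\alpha_i$, and Lemma~\ref{angle bisector} applied at $Q_i$ says that the two chords meeting at $Q_i$ make equal angles with the tangent line at $Q_i$, equivalently equal angles with the radius $\overline{OQ_i}$. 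Since the angle a chord makes with the radius at one of its endpoints is determined by the chord's subtended central angle (it is $\tfrac{\pi}{2} - \alpha_i$ by the isoceles triangle $\triangle OQ_iQ_{i+1}$), the equal-angle condition at $Q_i$ forces $\alpha_{i-1} = \alpha_i$ for every $i$. Hence all the chords subtend the same central angle $2\alpha$, i.e. all segments of $\gamma$ have equal length and each consecutive pair of boundary points is separated by the same arc.

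With that rigidity in hand, the remaining work is bookkeeping. Going around $\gamma$, the boundary points $Q_0, Q_1, \dots$ advance by a fixed angular step; since $\gamma$ closes up after $m$ segments, this step must be a rational multiple of $2\pi$, say $2\pi \cdot \tfrac{q}{p}$ in lowest terms, and the $p$ distinct boundary points visited are exactly the vertices of a regular $p$-gon inscribed in the circle. If the angular step is $2\pi/p$ (i.e. $q = 1$, or more precisely $q \equiv \pm 1$) the resulting figure is a regular polygon; otherwise it is the regular star polygon $\{p/q\}$. The alternating front/back condition is automatic from the general discussion (no two consecutive segments on the same face), and one should note the parity subtlety already flagged in the introduction for the over--under curves: when the number of distinct vertices is odd, the polygon or star is traversed twice before closing smoothly, so $\gamma$ has period $2p$ rather than $p$ — but in all cases the \emph{shape} is a regular polygon or regular star inscribed in the circle, which is what the proposition asserts. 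For the converse, I would simply check that any such regular polygon or star, with edges alternating faces, satisfies the equal-angle condition of Lemma~\ref{angle bisector} at every vertex by symmetry, hence is a geodesic.

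The main obstacle, such as it is, is not any single hard estimate but rather making the ``equal angles with the radius forces equal subtended arcs'' step fully rigorous while correctly handling stars: one must be careful that the angle between a chord and the radius does not by itself distinguish a chord from its ``reflection,'' so the argument that the angular steps are all equal \emph{and in the same rotational direction} needs the local picture at $Q_i$ (the incoming and outgoing chords lie on opposite sides of the radius $\overline{OQ_i}$) rather than just equality of angle magnitudes. Once the consistent-direction point is nailed down, everything else is elementary circle geometry and the classification of which constant angular steps close up (the theory of regular star polygons $\{p/q\}$). I would therefore devote the bulk of the written proof to the rigidity step and the direction subtlety, and treat the closing-up combinatorics and the converse as short verifications.
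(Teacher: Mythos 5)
Your proposal is correct and follows essentially the same route as the paper: apply Lemma~\ref{angle bisector} at each boundary point to get the equal-angle condition, deduce that all chords are congruent (the paper computes each chord length as $2\cos\theta$, where you equivalently use equal subtended central angles), and conclude that the shape is a regular polygon or star, with the converse checked by symmetry. The only difference is that you spell out the closing-up combinatorics and the reflection/orientation subtlety, which the paper compresses into the single sentence that a closed curve of equal chords reflected at equal angles must be a regular polygon or star; your extra care there is warranted but not a departure in method.
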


By ``regular star," we mean a self-intersecting polygon with equal angles and equal sides (see Figure~\ref{Double disk figure 1}).
Here follows the proof of Proposition \ref{polygons and stars}.

\begin{proof}
Suppose $\gamma$ has the shape of a regular polygon or star inscribed in the unit circle, whose edges alternate between the front and back of $X_\infty$. We wish to prove that $\gamma$ is locally length-minimizing. We know the line segments are length-minimizing, so it will suffice to prove that $\gamma$ is locally length-minimizing at its vertices.

Let $\overline{PQ}$ and $\overline{QR}$ be two consecutive segments of $\gamma$, where $P$, $Q$, and $R$ lie on the unit circle. Let $O$ be the center of the unit disk. Since $\gamma$ has the shape of a regular polygon or star inscribed in the unit circle, we know $\overline{QO}$ bisects $\angle{PQR}$. In other words, $m\angle{PQO} = m\angle{RQO}$. Thus, by Lemma \ref{angle bisector}, $\gamma$ is locally minimizing at point $Q$.

We have shown that $\gamma$ is locally length-minimizing at all its vertices. Thus, $\gamma$ is locally length-minimizing. This means by definition that $\gamma$ is a geodesic.

Conversely, suppose $\gamma$ is a geodesic. Then $\gamma$ is the concatenation of a series of chords, which alternately lie on the front and back of $X_\infty$. Let $\overline{PQ}$ and $\overline{QR}$ be two consecutive segments of $\gamma$, where $P$, $Q$, and $R$ lie on the unit circle. Let $O$ be the center of the unit disk. Then by Lemma \ref{angle bisector}, $m\angle{PQO} = m\angle{RQO}$.

Let $\theta = m\angle{PQO} = m\angle{RQO}$. Then
\[PQ = QR = 2\cos{\theta}\]

Thus, all the line segments of $\gamma$ are equal in length. Since $\gamma$ is a closed curve consisting of equal chords of the unit circle reflected at equal angles, $\gamma$ must have the shape of a regular polygon or star inscribed in the unit circle.
\end{proof}

Note that the geodesic $\gamma$ can traverse a regular polygon or star more than once. If the polygon or star has an odd number of edges, then $\gamma$ must traverse it an even number of times, since the period of $\gamma$ must be an even number. We have now classified all closed geodesics on $X_\infty$. Next, we will determine the minimizing indices of these geodesics.

\subsection{Minimizing indices of double disk geodesics}

We are now ready to prove Theorem~\ref{double disk minind}. We first sketch the idea, then provide the details. Given a closed geodesic $\gamma$ on the double disk, we wish to find its minimizing index. We will pick an arbitrary subcurve of $\gamma$ with some fixed length, and prove that this subcurve is length-minimizing. To do so, we will attempt to construct the shortest possible path from one endpoint of this subcurve to the other in $X_\infty$. We will prove that this path must intersect the unit circle at one of four possible points. For each of these points, we will calculate the length of its correspoinding path, and finally prove that the shortest distance is achieved by the geodesic $\gamma$ itself. Here follows the proof of Theorem~\ref{double disk minind}.

\begin{proof}[Proof of Theorem~\ref{double disk minind}]
Let $\gamma$ be a closed geodesic on $X_\infty$. Let $m = per(\gamma)$. We must prove that $minind(\gamma) = m$. We know by Proposition \ref{minindperpf} that $minind(\gamma) \geq m$; thus, it will suffice to prove that $minind(\gamma) \leq m$.

Let $\overline{PQ}$ and $\overline{QR}$ be two consecutive segments of $\gamma$, where $P$, $Q$, and $R$ lie on the unit circle. Let $O$ be the center of the unit disk.

Lemma \ref{angle bisector} implies that $m\angle{PQO} = m\angle{RQO}$. Let $\theta = m\angle{PQO} = m\angle{RQO}$. Then $PQ = 2\cos{\theta}$. We know that $\gamma$ is a regular polygon or star (Proposition \ref{polygons and stars}), so every segment of $\gamma$ must have length $2\cos{\theta}$. It follows that the total length of $\gamma$ is $2m\cos{\theta}$.

In order to prove that $minind(\gamma) \leq m$, we must prove that every subcurve of $\gamma$ with length $2\cos{\theta}$ is length-minimizing. That is, we must prove that if some subcurve of $\gamma$ has length $2\cos{\theta}$ and endpoints $X$ and $Y$, then there is no path from $X$ to $Y$ on $X_\infty$ with length less than $2\cos{\theta}$.

\begin{figure}
\includegraphics[scale=.6]{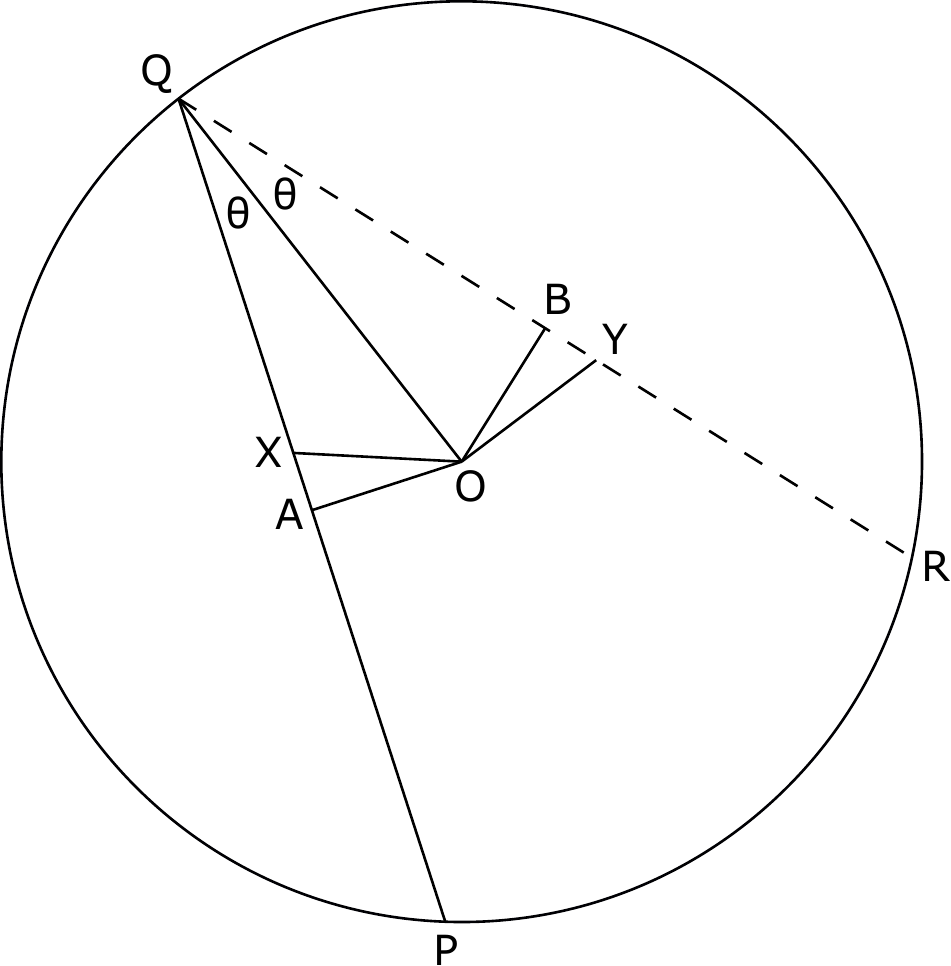}
\caption{Figure used in the proof of Lemma~\ref{angle bisector}, Proposition~\ref{polygons and stars}, and Theorem~\ref{double disk minind}}
\label{Double disk figure 4}
\end{figure}

Suppose $X$ and $Y$ are the endpoints of some subcurve of $\gamma$ with length $2\cos{\theta}$ (Figure \ref{Double disk figure 4}). Without loss of generality, suppose $X$ is on $\overline{PQ}$ and $Y$ is on $\overline{QR}$, so $XQ + QY = 2\cos{\theta}$. Let $Z$ be the point on the unit circle which minimizes the length $XZ + ZY$. If we can show that $XZ + ZY \geq 2\cos{\theta}$, this will prove that $\overline{XQ} \cup \overline{QY}$ is length-minimizing.

There are two degenerate cases: the case that $X = Y = O$, and the case that $X = Q$ or $Y = Q$. If $X = Y = O$, then $XZ + ZY = 2$ for all $Z$ on the unit circle, so the subcurve is length-minimizing. If $X = Q$ or $Y = Q$, then the subcurve is a line segment, and therefore length-minimizing. Thus, we can assume that $X, Y \neq O$ and $X, Y \neq Q$.

Let $A$ be the midpoint of $\overline{PQ}$, and let $B$ be the midpoint of $\overline{QR}$. Then $AQ + QB = XQ + QY = 2\cos{\theta}$, so
\[AX = BY\]

Furthermore,
\begin{gather*}
m\angle{XAO} = m\angle{YBO} = \frac{\pi}{2}\\
AO = BO = \sin{\theta}\
\end{gather*}

Thus, by the Pythagorean Theorem, $XO = YO$. Let $k = XO = YO$.

\begin{figure}
\includegraphics[scale=.6]{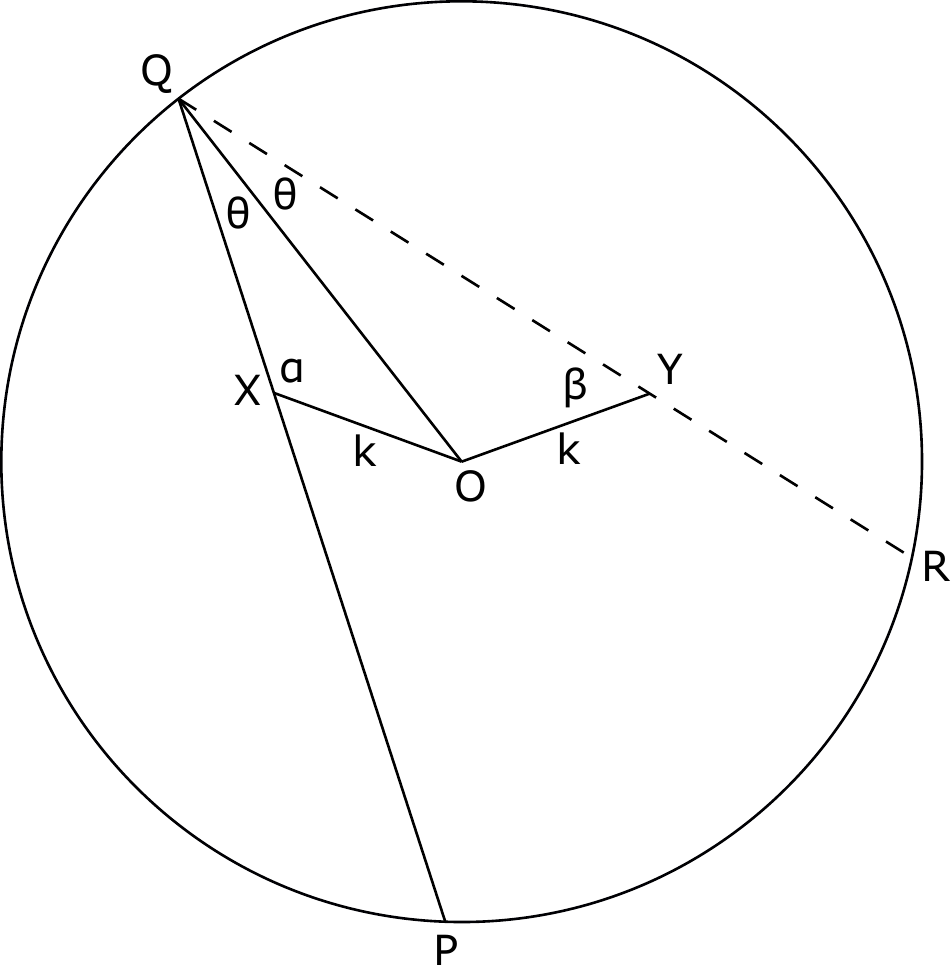}
\caption{Figure used in the proof of Theorem \ref{double disk minind}}
\label{Double disk figure 5}
\end{figure}

Let $\alpha = m\angle{QXO}$ and $\beta = m\angle{QYO}$ (Figure \ref{Double disk figure 5}). Using the law of sines on $\triangle{QXO}$, we get
\[\sin{\alpha} = \frac{\sin{\theta}}{k}\]

Using the law of sines on $\triangle{QYO}$, we get
\[\sin{\beta} = \frac{\sin{\theta}}{k}\]

Thus,
\[\sin{\alpha} = \sin{\beta}\]

It follows that either $\alpha = \beta$, or $\alpha + \beta = \pi$.

If $\alpha$ = $\beta$, then by the angle-angle-side property, $\triangle{XQO} \cong \triangle{YQO}$. This implies that $XQ = YQ = \cos{\theta}$, and therefore that $\alpha = \beta = \frac{\pi}{2}$. Thus, no matter what, $\alpha + \beta = \pi$.

The angles of $\square{XQYO}$ must sum to $2\pi$. Thus,
\begin{align*}
m\angle{XOY} &= 2\pi - \alpha - \beta - 2\theta\\
\Rightarrow m\angle{XOY} &= \pi - 2\theta
\end{align*}

Recall that $Z$ is the point on the unit circle which minimizes the length $XZ + ZY$. If $\overline{XZ} \cup \overline{ZY}$ is length-minimizing, then $\overline{XZ} \cup \overline{ZY}$ must be locally length-minimizing. By Lemma \ref{angle bisector}, it follows that $m\angle{XZO} = m\angle{YZO}$. Let $\varphi = m\angle{XZO} = m\angle{YZO}$.

Let $\zeta = m\angle{ZXO}$ and $\eta = m\angle{ZYO}$. Using the law of sines on $\triangle{ZXO}$, we get
\[\sin{\zeta} = \frac{\sin{\varphi}}{k}\]

Using the law of sines on $\triangle{ZYO}$, we get
\[\sin{\eta} = \frac{\sin{\varphi}}{k}\]

Thus,
\[\sin{\zeta} = \sin{\eta}\]

It follows that either $\zeta = \eta$ or $\zeta + \eta = \pi$.

\begin{figure}
\includegraphics[scale=.7]{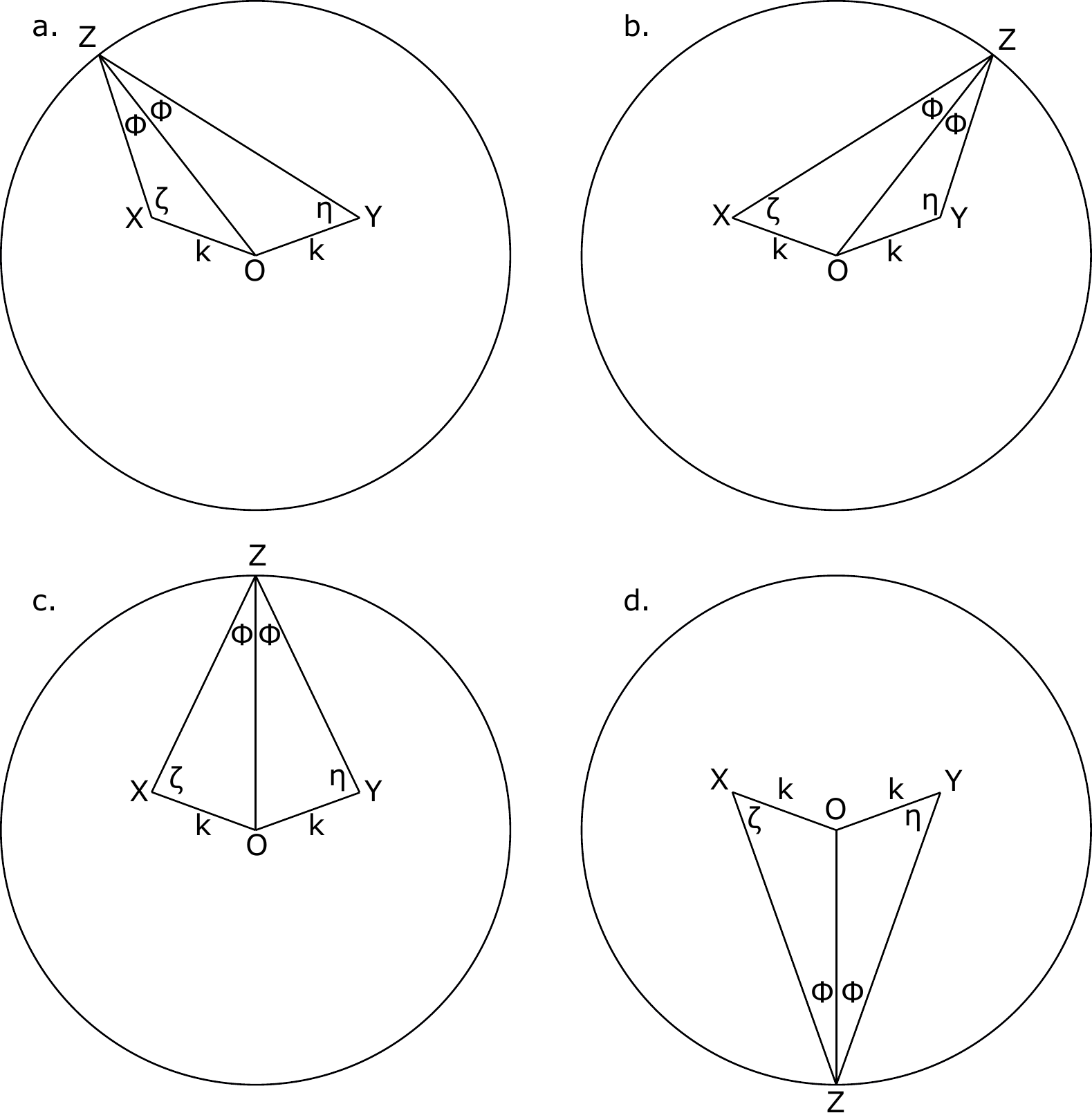}
\caption{Figure used in the proof of Theorem \ref{double disk minind}}
\label{Double disk figure 6}
\end{figure}

\textbf{Case 1: $\zeta + \eta = \pi$}

The angles of $\square{XZYO}$ must sum to $2\pi$. Thus,
\begin{align*}
m\angle{XZY} &= 2\pi - \zeta - \eta - (\pi - 2\theta)\\
\Rightarrow m\angle{XZY} &= 2\theta\\
\Rightarrow \varphi &= \theta
\end{align*}

It follows that
\[\sin{\zeta} = \sin{\eta} = \frac{\sin{\theta}}{k}\]

Thus, either
\[\zeta = \alpha, \eta = \beta \Rightarrow \square{XZYO} \cong \square{XQYO}\]

(Figure \ref{Double disk figure 6}a) or
\[\zeta = \beta, \eta = \alpha \Rightarrow \square{XZYO} \cong \square{YQXO}\]

(Figure \ref{Double disk figure 6}b). In either case,
\begin{align*}
XZ + ZY &= XQ + QY\\
\Rightarrow XZ + ZY &= 2\cos{\theta}
\end{align*}

\textbf{Case 2: $\zeta = \eta$}

By the angle-angle-side property, $\triangle{XZO} \cong \triangle{YZO}$. Thus, $m\angle{XOZ} = m\angle{YOZ}$. Recall that $m\angle{XOY} = \pi - 2\theta$. It follows that either
\[m\angle{XOZ} = m\angle{YOZ} = \frac{\pi}{2} - \theta\]

(Figure \ref{Double disk figure 6}c) or
\[m\angle{XOZ} = m\angle{YOZ} = \frac{\pi}{2} + \theta\]

(Figure \ref{Double disk figure 6}d). If $m\angle{XOZ} = m\angle{YOZ} = \frac{\pi}{2} + \theta$, then using the law of cosines on $\triangle{XZO}$, we find
\begin{align*}
(XZ)^2 &= 1 + k^2 - 2k\cos{\Big(\frac{\pi}{2} + \theta\Big)}\\
&= 1 + k^2 + 2k\sin{\theta}\\
&\geq 1\\
\Rightarrow XZ &\geq 1\\
\Rightarrow XZ + ZY &\geq 2\\
\Rightarrow XZ + ZY &\geq 2\cos{\theta}
\end{align*}

Alternatively, if $m\angle{XOZ} = m\angle{YOZ} = \frac{\pi}{2} - \theta$, then using the law of cosines on $\triangle{XZO}$, we find
\begin{align*}
(XZ)^2 &= 1 + k^2 - 2k\cos{\Big(\frac{\pi}{2} - \theta\Big)}\\
&= 1 + k^2 - 2k\sin{\theta}\\
\Rightarrow XZ &= \sqrt{1 + k^2 - 2k\sin{\theta}}\\
\Rightarrow XZ + ZY &= 2\sqrt{1 + k^2 - 2k\sin{\theta}}
\end{align*}

We wish to prove that $XZ + ZY \geq 2\cos{\theta}$. It will suffice to prove that $(XZ + ZY)^2 - (2\cos{\theta})^2 \geq 0$. We have
\begin{align*}
(XZ + ZY)^2 - (2\cos{\theta})^2 &= 4(1 + k^2 - 2k\sin{\theta}) - 4\cos^2{\theta}\\
&= 4(1 + k^2 - 2k\sin{\theta}) - 4(1 - \sin^2{\theta})\\
&= 4(k^2 - 2k\sin{\theta} + \sin^2{\theta})\\
&= 4(k - \sin{\theta})^2\\
&\geq 0
\end{align*}

Thus,
\[XZ + ZY \geq 2\cos{\theta}\]

We have proven that in all possible cases, $XZ + ZY \geq 2\cos{\theta}$. Since $Z$ is the point on the unit circle which minimizes the length $XZ + ZY$, it follows that $\overline{XQ} \cup \overline{QY}$ is length-minimizing.

We can conclude that every subcurve of $\gamma$ with length $2\cos{\theta}$ is length-minimizing. Since the total length of $\gamma$ is $2m\cos{\theta}$, it follows that $minind(\gamma) \leq m$, and therefore $minind(\gamma) = m$.
\end{proof}

We now have a complete classification of closed geodesics on the double disk (Proposition \ref{polygons and stars}), as well as their minimizing indices (Theorem \ref{double disk minind}). Next, we will use our results on the double disk to study the minimizing properties of double regular polygons.

\subsection{Upper bound on the minimizing index of $X_n$}

Recall from \cite[Proposition 2.1]{ade} that for $n$ even, $minind(X_{n})=2$. In this section we consider the case $n$ odd, and prove a general upper bound on the minimizing index of $X_n$ (Corollary~\ref{divisor}). We will then pose a conjecture about the exact minimizing index of $X_n$.

To prove Corollary \ref{divisor}, we will consider the class of closed geodesics on $X_n$ which only intersect the border of the regular $n$-gon at the midpoints of its sides. These geodesics are the shape of regular polygons and stars (Figure \ref{Double disk figure 7}) and are identical in shape to geodesics on the double disk $X_\infty$ (Figure~\ref{Double disk figure 1}). We will use this relationship to determine the minimizing indices of all such geodesics, thus providing the stated upper bound on the minimizing index of $X_n$.

\begin{lemma}
\label{inscribed polygons and stars}
Let $\gamma$ be a closed geodesic on $X_n$ which only intersects the border of the regular $n$-gon at midpoints. Then $minind(\gamma) = per(\gamma)$.
\end{lemma}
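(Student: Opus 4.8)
The plan is to rescale so that the regular $n$-gon has apothem $1$; then its edge midpoints lie exactly on the unit circle, both $minind$ and $per$ are unchanged, and $X_\infty$ (the doubled unit disk) is the natural comparison space. Since $\gamma$ meets the boundary of $X_n$ only at midpoints, every segment of $\gamma$ is a chord of the unit circle, and the reflection condition at each such midpoint is exactly the angle-bisector condition of Lemma~\ref{angle bisector}; as in the proof of Proposition~\ref{polygons and stars} this forces all chords to have equal length and to be reflected at equal angles, so $\gamma$ has the shape of a regular polygon or regular star inscribed in the unit circle whose edges alternate faces. Let $\gamma'$ be the closed curve on $X_\infty$ of this same shape: geometrically $\gamma'$ coincides with $\gamma$ (each face of $X_n$ contains $\overline{D}$, so $\gamma$ already lies in the two disk faces), so $\gamma'$ and $\gamma$ have the same length $L$ and the same period $m := per(\gamma)$, and by Proposition~\ref{polygons and stars} $\gamma'$ is a geodesic on $X_\infty$. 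By Proposition~\ref{minindperpf} we already have $minind(\gamma) \ge m$, so it suffices to show $minind(\gamma) \le m$, i.e.\ that $\gamma$ is a $1/m$ geodesic.

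The key is a length-nonincreasing retraction. Let $\pi\colon \mathbb{R}^2 \to \overline{D}$ be nearest-point projection onto the closed unit disk; it is $1$-Lipschitz and restricts to the identity on $\overline{D}$. Because the incircle is inscribed in the $n$-gon, every boundary point of the $n$-gon has norm $\ge 1$, so $\pi$ carries $\partial(n\text{-gon})$ into the unit circle. Hence applying $\pi$ on the front face and on the back face separately yields a well-defined continuous map $\hat\pi\colon X_n \to X_\infty$ which is $1$-Lipschitz on each closed face, agrees with the gluing along the shared boundary, and fixes each face's copy of $\overline{D}$ pointwise; decomposing an arbitrary path in $X_n$ into face-pieces shows $\hat\pi$ is globally $1$-Lipschitz. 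Consequently, for any two points $X,Y$ lying in $\overline{D}$ (on either face), $d_{X_n}(X,Y) \ge d_{X_\infty}(\hat\pi(X),\hat\pi(Y)) = d_{X_\infty}(X,Y)$.

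Now take an arbitrary subcurve $\sigma \subseteq \gamma$ of length $L/m$ with endpoints $X,Y$. Its endpoints lie on segments of $\gamma$, which are chords of the unit circle and hence lie in $\overline{D}$. The corresponding subcurve $\sigma' \subseteq \gamma'$ has the same endpoints and length $L/m$. By Theorem~\ref{double disk minind}, $minind(\gamma') = m$, so $\gamma'$ is a $1/m$ geodesic and $\sigma'$ is length-minimizing on $X_\infty$; thus $d_{X_\infty}(X,Y) = L/m$. Combined with the comparison inequality, $d_{X_n}(X,Y) \ge L/m = \mathrm{length}(\sigma)$, so $\sigma$ is length-minimizing on $X_n$. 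Since $\sigma$ was an arbitrary subcurve of length $L/m$, the geodesic $\gamma$ is a $1/m$ geodesic, whence $minind(\gamma) \le m = per(\gamma)$, which together with Proposition~\ref{minindperpf} gives equality.

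The step I expect to need the most care is verifying that $\hat\pi$ is genuinely $1$-Lipschitz as a map of the glued spaces: one must check that $\pi$ sends $\partial(n\text{-gon})$ into the unit circle so the two face definitions agree along the identification, and that a path in $X_n$ crossing between faces maps to a legitimate path in $X_\infty$ of no greater length. One also has to handle the degenerate subcurves (an endpoint at the center $O$, or $\sigma$ a single full segment with a vertex of $\gamma$ as endpoint) exactly as in the proof of Theorem~\ref{double disk minind}; these cause no trouble since $O$ and all midpoints lie in $\overline{D}$. An alternative would be to re-run the casework of the proof of Theorem~\ref{double disk minind} directly inside $X_n$, bounding a competing path that leaves the incircle region by convexity of each face, but the Lipschitz-retraction argument is shorter and uses Theorem~\ref{double disk minind} as a black box.
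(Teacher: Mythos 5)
Your proof is correct, and it rests on the same reduction as the paper's: view $\gamma$ as a closed geodesic on the doubled inscribed disk and invoke Theorem~\ref{double disk minind} to know that each subarc of length $L/m$ minimizes there, then transfer that minimality back to $X_n$. Where you differ is in the transfer step. The paper argues pointwise: a competing path from $X$ to $Y$ in $X_n$ must cross the polygon's boundary at some point $Z$; if $Z$ lies outside the incircle $o$, they take the intersection $W$ of $\overline{XZ}$ with $o$ and use the triangle inequality ($WZ + ZY \geq WY$) together with the disk minimality at $W$ to get $XZ + ZY \geq XQ + QY$. You instead build a global length-nonincreasing retraction $\hat\pi\colon X_n \to X_\infty$ by nearest-point projection on each face and deduce $d_{X_n}(X,Y) \geq d_{X_\infty}(X,Y)$ for all points of the disk. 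Your route is conceptually cleaner, needs no case analysis (the degenerate cases you worry about are in fact already absorbed by the retraction inequality), and yields a reusable comparison of the two metrics; its cost is exactly the step you flag, namely verifying that the face-wise projections glue to a $1$-Lipschitz map of the glued length spaces, which requires a path-decomposition argument you only sketch (easiest fix: compare only against shortest paths in $X_n$, which are finite concatenations of segments each lying in one closed face). The paper's argument is more elementary and entirely local, avoiding that verification. Both are valid proofs of the lemma.
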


\begin{proof}
Let $m = per(\gamma)$. We will prove that $minind(\gamma) = m$. We know by Proposition \ref{minindperpf} that $minind(\gamma) \geq m$; thus, it will suffice to prove that $minind(\gamma) \leq m$.

Let $L$ be the total length of $\gamma$. In order to prove that $minind(\gamma) \leq m$, we must prove that any subcurve of $\gamma$ with length $\frac{L}{m}$ is length-minimizing.

We know that $\gamma$ has the shape of a regular polygon or star. Each segment of $\gamma$ has length $\frac{L}{m}$. Let $\overline{PQ}$ and $\overline{QR}$ be two consecutive segments of $\gamma$, and let $X$ and $Y$ be points on $\overline{PQ}$ and $\overline{QR}$, respectively, such that $XQ + QY = \frac{L}{m}$. We will prove that the subcurve $\overline{XQ} \cup \overline{QY}$ is length-minimizing.

Inscribe a circle $o$ in $X_n$ (Figure \ref{Double disk figure 8}). We know that $\gamma$ is a geodesic on $X_\infty$ (Proposition \ref{polygons and stars}). Moreover, we know that $\gamma$ is a $\frac{1}{m}$-geodesic on $X_\infty$ (Theorem \ref{double disk minind}). Thus, for all points $W$ on $o$, $XW + WY \geq XQ + QY$.

Let $Z$ be a point on the border of the regular $n$-gon. Then $Z$ is either on or outside of circle $o$. If $Z$ is on $o$, then
\[XZ + ZY \geq XQ + QY\]

If $Z$ is outside of circle $o$, then let $W$ be the intersection point of $\overline{XZ}$ and $o$. We know $XW + WY \geq XQ + QY$. Furthermore, by the triangle inequality,
\begin{align*}
WZ + ZY &\geq WY\\
\Rightarrow XZ + ZY &\geq XW + WY\\
\Rightarrow XZ + ZY &\geq XQ + QY
\end{align*}

Thus, for all points $Z$ on the border of the regular $n$-gon, $XZ + ZY \geq XQ + QY$. It follows that $\overline{XQ} \cup \overline{QY}$ is length-minimizing.

\begin{figure}
\includegraphics[scale=.6]{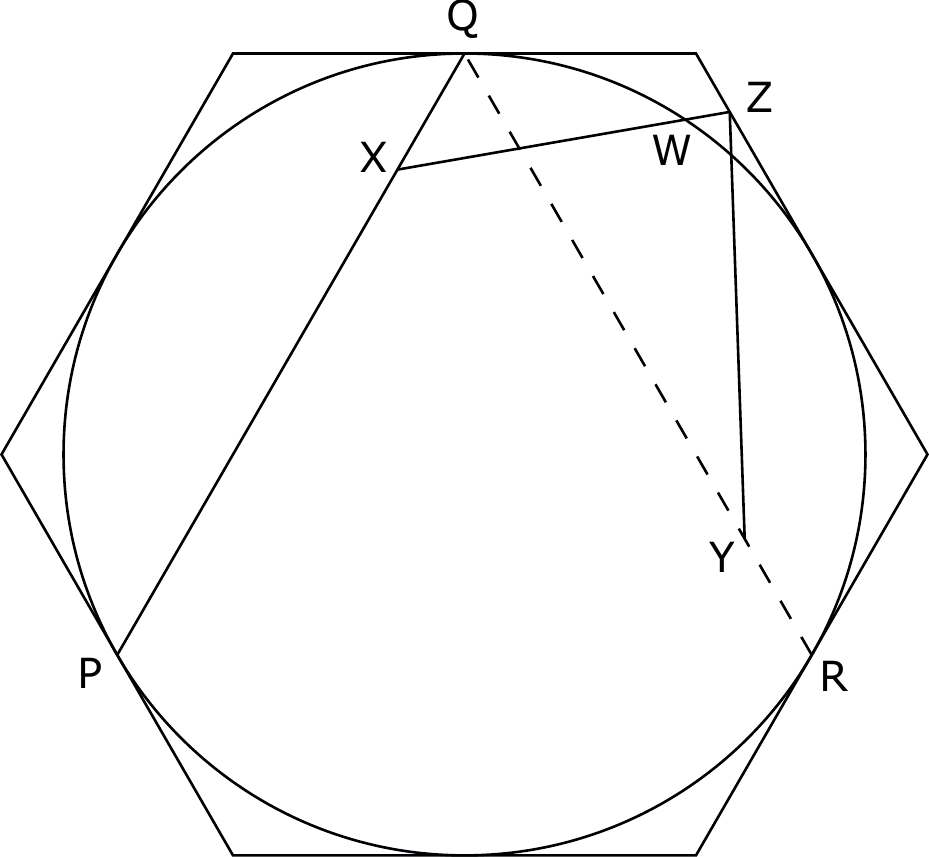}
\caption{Figure used in the proof of Lemma \ref{inscribed polygons and stars}}
\label{Double disk figure 8}
\end{figure}

We have shown that any subcurve of $\gamma$ with length $\frac{L}{m}$ is length-minimizing. Thus, $minind(\gamma) \leq m$. It follows that $minind(\gamma) = m$.
\end{proof}

We now give a proof of Corollary~\ref{divisor}.

\begin{proof}
Suppose $n$ is odd and $n \geq 3$. Let $d$ be the least prime divisor of $n$. Then there exists a closed geodesic $\gamma$ on $X_n$ such that $\gamma$ only intersects the border of the regular $n$-gon at midpoints, and $per(\gamma) = 2d$. The shape of $\gamma$ is a regular $d$-gon, which is traversed twice (Figure \ref{Double disk figure 9}). By Lemma \ref{inscribed polygons and stars}, we know $minind(\gamma) = 2d$. Thus, $minind(X_n) \leq 2d$.
\end{proof}

\begin{figure}
\includegraphics[scale=.9]{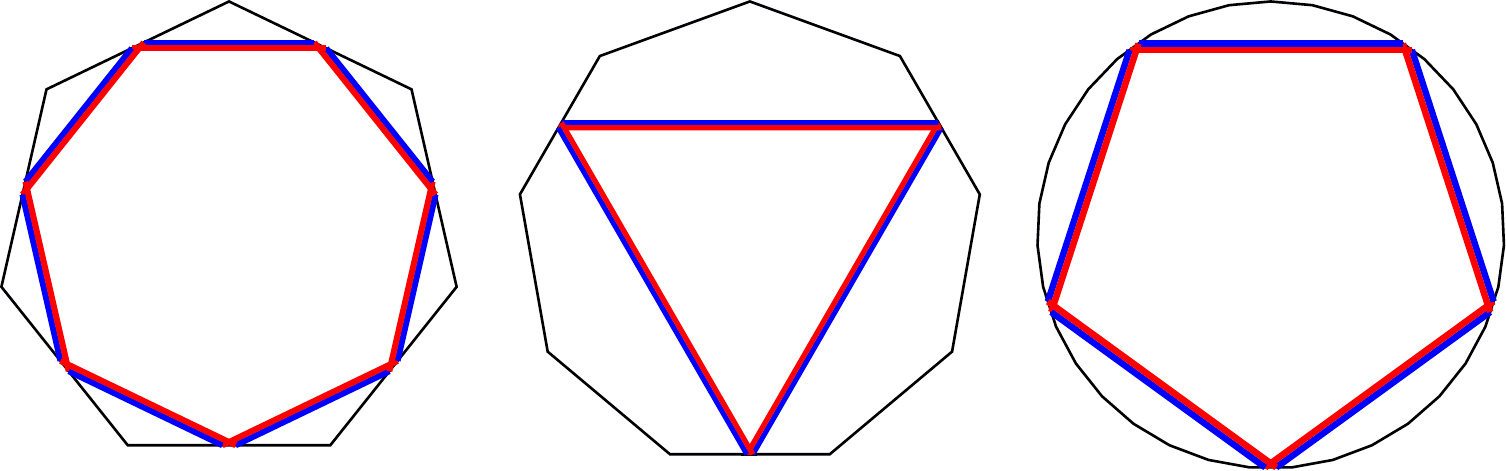}
\caption{From left ro right: a period 14 geodesic on $X_7$, a period 6 geodesic on $X_9$, and a period 10 geodesic on $X_{35}$}
\label{Double disk figure 9}
\end{figure}

\section{Limit Properties of Minimizing Index for Doubled Polygons}\label{limit}


This section is devoted to providing a rather long and detailed proof of Theorem~\ref{infinity}. To aid the reader, we first offer a brief summary of the proof. We introduce the notion of \emph{skip number} (Definition \ref{DefSkipNumber}), the number of vertices each segment of the geodesic passes in the counterclockwise direction. Using skip numbers, we define a notion of \emph{convergence} for a sequence of geodesics of equal period on increasing $X_n$ (Definition \ref{DefConvergence}). As $n$ grows without bound, $X_n$ tends to a double disk; in effect, a sequence of geodesics converges if their shape stabilizes with respect to the disk.

We then introduce the notion of \emph{vertex ratio} (Definition \ref{DefVertexRatio}), the ratio into which a geodesic splits a side of the polygon. We prove that in a convergent sequence of geodesics, the sequences of corresponding vertex ratios converge (Proposition \ref{lims}), and their limits obey certain arithmetic properties (Corollarys \ref{3lims} and \ref{arithmetic}). Using these properties, we show that at least one vertex ratio limit is 0 or 1 (Proposition \ref{Lim0}), which is sufficient to prove that the minimizing index grows without bound (Lemma \ref{0.5}). The intuition is as follows:~when a geodesic hits very close to a vertex, we can find a short segment of the geodesic through this intersection point which is not length-minimizing. The closer our geodesic gets to a vertex, the smaller this segment becomes, so the larger the geodesic's minimizing index must be.


\subsection{Skip Numbers}

The notion of a sequence of \textit{skip numbers} is a key feature of closed geodesics on $X_n$ in defining convergence and the subsequent analysis. 

\begin{defi}
The $\textit{skip number}$ of a segment of a closed, oriented geodesic $\gamma$ on $X_n$ is the number of vertices of $X_n$ it passes in the counterclockwise direction (see Figure \ref{SkipNumber}).
\label{DefSkipNumber}
\end{defi}

\begin{figure}
\includegraphics[scale=0.4]{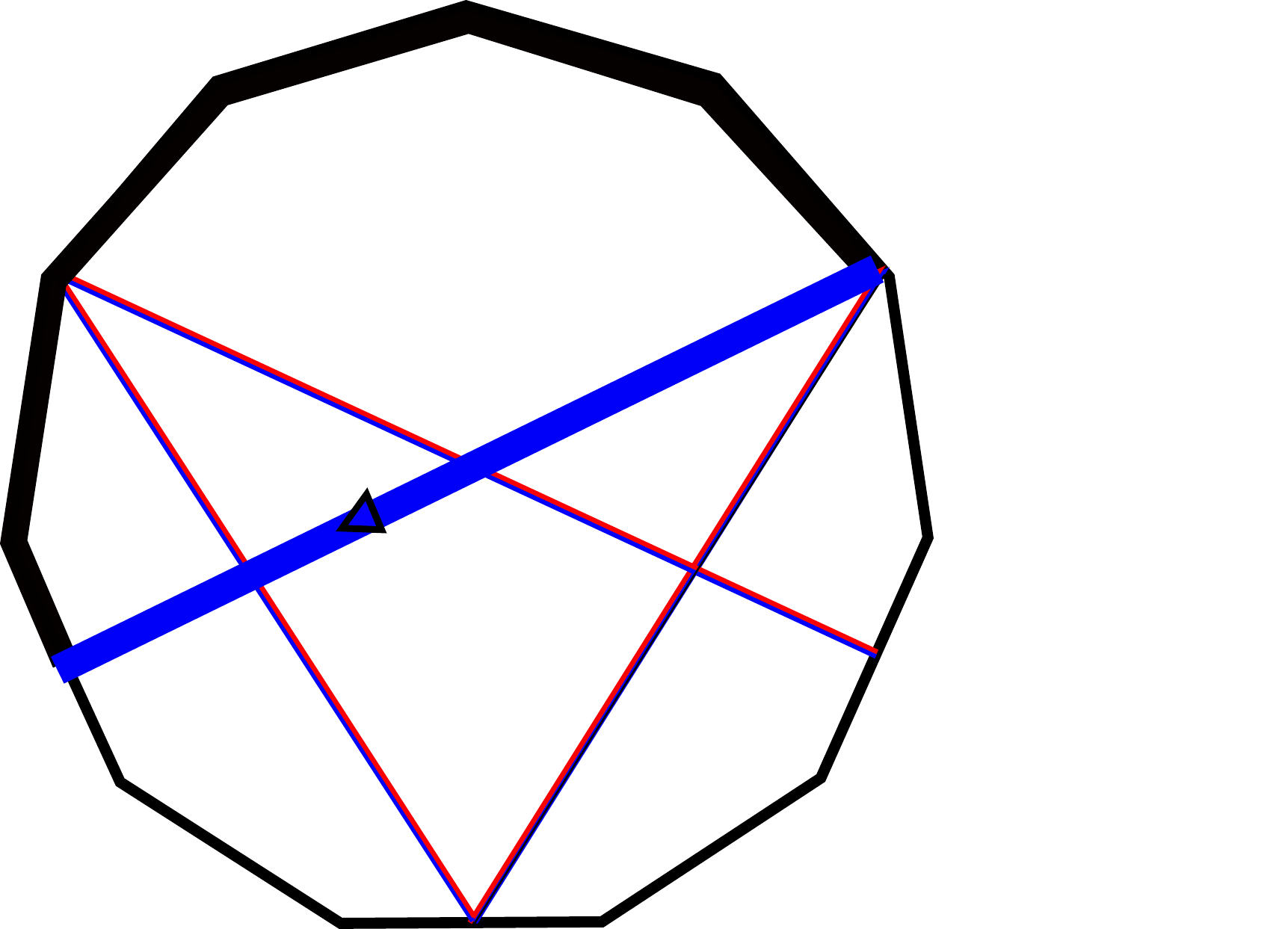}
\caption{A Skip Number of 5 of a Segment of a Closed Geodesic on $X_{11}$}
\label{SkipNumber}
\end{figure}

If we choose a starting point where a geodesic $\gamma$ intersects an edge of $X_n$ and traverse $\gamma$ in a given direction, there will be a sequence of skip numbers of length $per(\gamma)$ corresponding to the traversal. The following restriction on consecutive skip numbers in the sequence is instrumental in determining the shape of a closed geodesic on $X_n$.

\begin{prop} Consecutive skip numbers can differ by no more than $1$.
\label{skips}
\end{prop}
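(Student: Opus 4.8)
The plan is to unfold the doubled polygon into the plane and track how a billiard trajectory hits successive edges, translating the geometric "reflection = incidence" condition into a statement about the angular position of consecutive hit points on the unit circle circumscribing the $n$-gon. First I would set up coordinates: inscribe $X_n$ in a circle of radius $1$ centered at $O$, label the vertices $V_0,\dots,V_{n-1}$ at angles $2\pi j/n$, and for a segment of $\gamma$ from an edge point $P$ to the next edge point $Q$, record the edges containing $P$ and $Q$. The skip number of that segment is, by Definition \ref{DefSkipNumber}, essentially the index difference (mod $n$) between the edge of $P$ and the edge of $Q$, counted counterclockwise. So the claim is equivalent to: if segment $s_i$ lands on edge $e_a$ and segment $s_{i+1}$ leaves from $e_a$ and lands on $e_b$, and the previous segment $s_{i-1}$ had skip number $\sigma$, then $s_{i+1}$ has skip number in $\{\sigma-1,\sigma,\sigma+1\}$.

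The key step is a monotonicity/continuity argument on the reflection map. Fix the incoming edge $e_a$ and let the hit point $Q$ vary along $e_a$; by the angle-of-incidence-equals-angle-of-reflection law (the analogue of Lemma \ref{angle bisector} for the polygon, which holds because $\overline{QO}$ need NOT bisect here — rather reflection is across the edge, so I should instead use the perpendicular-to-the-edge as the mirror), the outgoing ray is determined, and as $Q$ moves continuously along $e_a$ the landing edge $e_b$ of the next segment changes. The heart of the matter is that as $Q$ sweeps across a single edge $e_a$, the landing point can sweep across at most "one edge's worth" of extra angular displacement relative to the incoming configuration — more precisely, the map sending the incoming hit point to the outgoing hit point, when both are measured by which edge they lie on, is monotone and its total variation as $Q$ ranges over one edge is bounded by the angular width $2\pi/n$ of that edge (up to the discrete rounding that converts an angle to an edge index). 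Consequently the skip number, being this displacement rounded to an integer, can change by at most $1$ when we pass from one segment to the next, since the two segments share the vertex/edge data at the common hit point $Q$ and differ only by where along the relevant edges the endpoints sit.

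The main obstacle I expect is making the "total variation $\le 2\pi/n$" claim precise and handling the boundary bookkeeping: skip numbers are integers obtained from real angular quantities by a floor-type operation, so I must be careful that the rounding never introduces a jump of $2$. The clean way is probably to show directly that consecutive hit points $Q_{i-1}, Q_i, Q_{i+1}$ on the circle satisfy an interlacing inequality — the arc from $Q_i$ to $Q_{i+1}$ (counterclockwise) differs from the arc from $Q_{i-1}$ to $Q_i$ by less than $2\pi/n$ — which follows from the reflection law plus the fact that all edges are chords subtending equal arcs $2\pi/n$; then the skip number, which counts how many vertex-angles $2\pi/n$ fit into that arc, can differ by at most $1$ between consecutive segments. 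I would carry this out as: (1) reflection law at a vertex-free interior edge point in terms of arc coordinates; (2) the interlacing arc inequality; (3) conclude the integer skip numbers differ by $\le 1$; and (4) separately check the degenerate case where a segment passes exactly through a vertex, where the skip number is defined by a limiting convention and the inequality persists by continuity.
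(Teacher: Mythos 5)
Your overall strategy --- pass to the circumscribed circle, convert the reflection law into a statement about arcs, and then convert arcs into integer skip numbers --- is the same family of argument the paper uses, but the step where you pass from arcs back to integers has a genuine gap. The skip number of a segment is the number of vertices (equivalently, of multiples of $2\pi/n$) lying in the open arc between the two edges the segment joins; it is \emph{not} a function of the length of that arc alone, since an arc of length $\ell$ in general position can contain either $\lfloor \ell/(2\pi/n)\rfloor$ or $\lceil \ell/(2\pi/n)\rceil$ vertices depending on where it sits relative to the vertex lattice. Consequently your step (2) does not imply your step (3): two consecutive arcs sharing the endpoint $Q_i$, with lengths differing by less than $2\pi/n$, can still have vertex counts differing by $2$. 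Concretely, normalizing $2\pi/n=1$ and working in angular coordinates, the arcs $(-1.05,\,0.1)$ and $(0.1,\,0.6)$ share an endpoint and have lengths $1.15$ and $0.5$, which differ by less than $1$, yet the first contains two integers and the second contains none. So even granting your interlacing inequality, the conclusion ``consecutive skip numbers differ by at most $1$'' does not follow; ruling out configurations like the one above requires using the position of the hit points relative to the vertices, which is precisely the information a length-only inequality discards.

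The paper closes this gap by anchoring each arc at a vertex of the edge shared by the two segments. With $Q$ the common reflection point on the side $\overline{BC}$, it extends the incoming ray $\overrightarrow{QP}$ to meet the circumscribed circle at $A$ and the outgoing ray $\overrightarrow{QR}$ to meet it at $D$; because $B$ and $C$ are vertices, the skip numbers are \emph{exactly} $\lceil \widehat{AB}/(2\pi/n)\rceil$ and $\lceil \widehat{CD}/(2\pi/n)\rceil$, and the reflection law at $Q$ (equal angles $\theta$ on the two sides) yields $\widehat{AB}<2\theta$ and $\widehat{CD}>2\theta-\frac{2\pi}{n}$ via the inscribed-angle and tangent--chord-angle theorems. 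The elementary fact that $\lceil x\rceil-\lceil y\rceil\le 1$ whenever $x-y<1$ then finishes the proof. If you want to salvage your outline, replace your projection-anchored arcs by these vertex-anchored ones; the monotonicity/total-variation sweep over the edge is not needed, since a direct computation at the single reflection point suffices. Two smaller points: your stated reduction compares $s_{i-1}$ with $s_{i+1}$ rather than two consecutive skip numbers, and your case (4) is vacuous, because a geodesic on $X_n$ cannot pass through a vertex (a cone point), a fact the paper uses elsewhere.
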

\begin{proof}

Let $\gamma$ be a geodesic on $X_n$, and let $s_1$ and $s_2$ be two consecutive skip numbers of $\gamma$. Without loss of generality, suppose $s_1 \geq s_2$. We will prove that $s_1 - s_2 \leq 1$.\\

\begin{figure}
\includegraphics[scale=0.4]{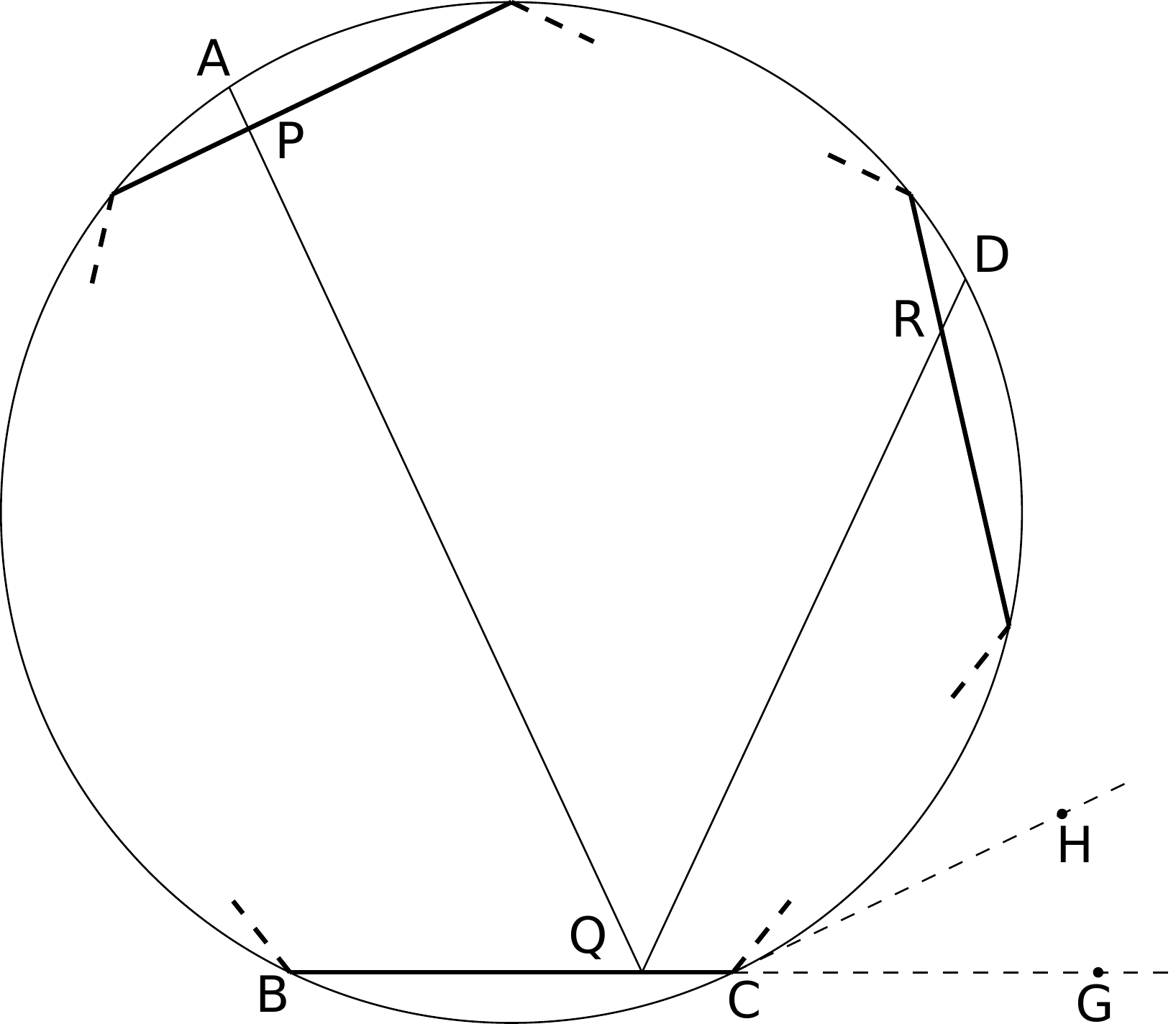}
\caption{Two adjacent segments of a geodesic on $X_n$}
\label{diff1pf}
\end{figure}

Let $\overline{PQ}$ and $\overline{QR}$ be the segments of $\gamma$ corresponding to skip numbers $s_1$ and $s_2$, respectively, as shown in Figure $\ref{diff1pf}$. Let $o$ be the circumscribed circle of $X_n$. Extend $\overrightarrow{QP}$ so that it intersects $o$ at point $A$, and extend $\overrightarrow{QR}$ so that it intersects $o$ at point $D$.\\

Let $\overline{BC}$ be the side of $X_n$ which contains point $Q$. Let $\theta = \angle{AQB} = \angle{DQC}$.\\

We have
\begin{align*}
s_1 &= \Bigl \lceil \frac{\widehat{AB}}{\frac{2\pi}{n}} \Bigr \rceil\\
s_2 &= \Bigl \lceil \frac{\widehat{CD}}{\frac{2\pi}{n}} \Bigr \rceil
\end{align*}
and
\[\widehat{AB} = 2\angle{ACB} < 2\theta\]

Extend $\overleftrightarrow{BC}$, and let $G$ be a point on $\overleftrightarrow{BC}$ such that $C$ is strictly between $B$ and $G$. Let $l$ be the line tangent to $o$ at point $C$, and let $H \neq C$ be a point on $l$ such that $H$ is inside $\angle{DQG}$. Then $\angle{GCH} = \frac{\pi}{n}$.\\

Thus,
\begin{align*}
\widehat{CD} &= 2\angle{DCH}\\
&= 2\angle{DCG} - \frac{2\pi}{n}\\
&> 2\theta - \frac{2\pi}{n}
\end{align*}

It follows that
\begin{alignat*}{3}
 \widehat{AB} - \widehat{CD} &< \frac{2\pi}{n}\\
\frac{\widehat{AB}}{\frac{2\pi}{n}} - \frac{\widehat{CD}}{\frac{2\pi}{n}} &< 1\\
\Bigl \lceil \frac{\widehat{AB}}{\frac{2\pi}{n}} \Bigr \rceil - \Bigl \lceil \frac{\widehat{CD}}{\frac{2\pi}{n}} \Bigr \rceil &\leq 1\\
 s_1 - s_2 &\leq 1
\end{alignat*}
\end{proof}

\subsection{Convergent Sequences of Geodesics}

We now proceed to introduce the notion of a convergent sequence of geodesics:

\begin{defi}
A sequence $\gamma_i$ of closed geodesics on $X_{n_i}$ with fixed period $n$ and skip number sequence $\{s_{ij}\}_{j = 1}^n$ is said to \emph{converge} if and only if there exists a constant $c > 0$ such that $\frac{s_{ij}}{n_i} \rightarrow c$ for all $1 \leq j \leq n$.
\label{DefConvergence}
\end{defi}

The geometric intuition for this definition follows from the fact that the $X_{n_i}$ converge in the Gromov-Hausdorff sense to $X_\infty$. Because the ratio of every skip number to the number of edges of $X_{n_i}$ tends to a uniform nonzero constant, we have convergence towards uniform arc length between segments on $X_\infty$. By Section \ref{diskclassification}, this resulting limit curve is a closed geodesic on $X_\infty$.

\begin{figure}
\begin{subfigure}{0.6\textwidth}
\includegraphics[scale = 0.8]{3nGons.pdf}
\subcaption{Convergence towards a Triangular Geodesic on $X_\infty$}
\end{subfigure}
\begin{subfigure}{0.6\textwidth}
\includegraphics[scale = 0.55]{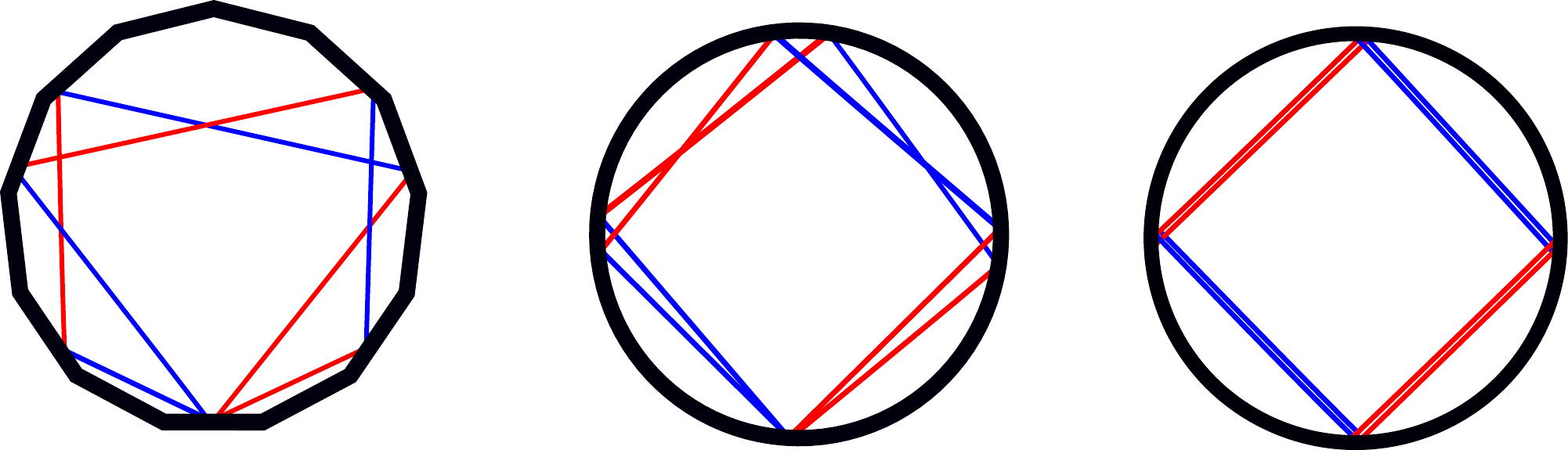}
\subcaption{Convergence towards a Square Geodesic on $X_\infty$}
\end{subfigure}
\caption{Examples of Convergent Sequences of Closed Geodesics}
\label{ConvergentGeodesics}
\end{figure}

Figure \ref{ConvergentGeodesics} shows examples of convergent sequences of geodesics. The following lemma now uses Proposition \ref{skips} along with the notion of geodesic convergence to show that we can restrict the proof of Theorem~\ref{infinity} to convergent sequences of geodesics.

\begin{lemma}
\label{convergent} 
A sequence $n_i \in \mathbb{N}$ satisfies $minind(X_{n_i}) \rightarrow \infty$ if and only if for all subsequences $n_{i_j}$ and all sequences of convergent geodesics $\gamma_i$ on $X_{n_{i_j}}$, we have $minind(\gamma_i) \rightarrow \infty$.
\end{lemma}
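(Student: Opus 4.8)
The plan is to prove both directions, with the forward direction being essentially trivial and the reverse direction requiring a compactness-type argument to extract convergent subsequences from arbitrary sequences of geodesics. First I would handle the forward direction: suppose $minind(X_{n_i}) \to \infty$. Given any subsequence $n_{i_j}$ and any sequence of convergent geodesics $\gamma_i$ on $X_{n_{i_j}}$, we have by definition $minind(\gamma_i) \geq minind(X_{n_{i_j}})$, and since $minind(X_{n_{i_j}}) \to \infty$ as a subsequence of a divergent sequence, we get $minind(\gamma_i) \to \infty$ immediately.

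For the reverse direction, I would argue by contraposition. Suppose $minind(X_{n_i}) \not\to \infty$; then there is a constant $M$ and a subsequence $n_{i_j}$ with $minind(X_{n_{i_j}}) \leq M$ for all $j$. For each $j$, choose a closed geodesic $\delta_j$ on $X_{n_{i_j}}$ realizing $minind(\delta_j) \leq M$. By Proposition~\ref{minindperpf}, $per(\delta_j) \leq minind(\delta_j) \leq M$, so all the $\delta_j$ have period bounded by $M$. Since periods are even integers in $[2, M]$, we may pass to a further subsequence on which $per(\delta_j)$ is a constant value $n$. Now each $\delta_j$ carries a skip number sequence $\{s_{j1}, \dots, s_{jn}\}$ with each $s_{jk}$ an integer between $1$ and $n_{i_j} - 1$ (or within whatever range is geometrically admissible). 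The key step is to extract a sub-subsequence along which $s_{jk}/n_{i_j}$ converges for each of the finitely many indices $k$: since each ratio lies in the compact interval $[0,1]$ and there are only $n$ of them, a diagonal/Bolzano--Weierstrass argument gives a subsequence along which $s_{jk}/n_{i_j} \to c_k$ for each $k$.

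The main obstacle is verifying that this limiting sequence of geodesics is genuinely \emph{convergent} in the sense of Definition~\ref{DefConvergence}, i.e.\ that all the limits $c_k$ are equal to a single nonzero constant $c$. This is where I expect to use Proposition~\ref{skips}: consecutive skip numbers differ by at most $1$, so $|s_{jk} - s_{j,k+1}| \leq 1$, hence $|s_{jk}/n_{i_j} - s_{j,k+1}/n_{i_j}| \leq 1/n_{i_j} \to 0$, forcing $c_k = c_{k+1}$ for all $k$, so all limits coincide with a common value $c$. To see that $c \neq 0$: if $c = 0$ then every segment of $\delta_j$ subtends an arc tending to zero relative to the whole circumscribed circle, and since the period is the fixed number $n$, the total arc swept (counted with the traversal) would tend to zero rather than to a positive integer multiple of $2\pi$ — contradicting that $\delta_j$ is a closed curve. (Alternatively, one argues the sum of skip numbers around a closed geodesic is a fixed positive multiple of $n_{i_j}$, so the average ratio is bounded below, forcing $c > 0$.) Having produced a convergent subsequence $\delta_j$ with $minind(\delta_j) \leq M$ bounded, we have exhibited a subsequence $n_{i_j}$ and a convergent sequence of geodesics whose minimizing indices do not tend to infinity, which is the negation of the right-hand side. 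This completes the contrapositive and hence the lemma.
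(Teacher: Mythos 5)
Your proposal is correct and follows essentially the same route as the paper: both directions handled the same way, with the reverse direction extracting a subsequence of bounded minimizing index, using Proposition~\ref{minindperpf} to fix the period, and then using Proposition~\ref{skips} together with the fact that the skip numbers sum to a positive integer multiple of $n_{i_j}$ to force all ratios $s_{jk}/n_{i_j}$ to a common nonzero limit. The only cosmetic difference is that you extract convergence of the ratios by Bolzano--Weierstrass and then equalize the limits, whereas the paper first fixes the winding number $l$ along a further subsequence and identifies the common limit directly as $l/n$; both are fine.
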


\begin{proof}
The forwards direction is immediate from the definition of minimizing index. We thus focus on the backwards direction. 

Assume for the sake of contradiction that $minind(X_{n_i}) \not\rightarrow \infty$. Then, we may extract a subsequence $n_{i_j}$ of $n_i$ such that $minind(X_{n_{i_j}}) \leq k$ for some fixed $k \in \mathbb{N}$. 
For ease of notation, let $n_i$ already be such a subsequence and let $\gamma_i$ be a closed geodesic on $X_{n_i}$ with $minind(\gamma_i) \leq k$. Since $per(\gamma_i) \leq minind(\gamma_i)$ (Proposition \ref{minindperpf}), there are a finite number of possibilities for $per(\gamma_i)$. 
Thus, we may refine $n_i$ by extracting a subsequence where $per(\gamma_i) = n \leq k$ for fixed $n$. Again we assume without loss of generality that $n_i$ is already this subsequence, and we let $\{s_{ij}\}_{j = 1}^n$ be a skip number sequence of $\gamma_i$. 
Because a geodesic segment must intersect two distinct segments of a doubled regular polygon, all skip numbers $s_{ij} < n_i$. Thus, $\Sigma_{j = 1}^n s_{ij} = l_i \cdot n_i$ with $l_i < n$.
Also, $l_i \in \mathbb{N}$ since $\gamma_i$ is closed and must thus wind around $X_{n_i}$ an integral number of times.

Therefore for each geodesic $\gamma_i$, there are only a finite number of possibilities for $l_i$ so we may again refine the sequence $n_i$ and assume without loss of generality that for all $\gamma_i$, $\Sigma_{j = 1}^n s_{ij} = l \cdot n_i$ for some fixed $l \in \mathbb{N}$ with $l < n$. 
Thus, for each geodesic $\gamma_i$, the average skip number is $\frac{l \cdot {n_i}}{n}$. By Proposition \ref{skips}, each skip number $s_{ij}$ satisfies $|s_{ij} - \frac{l \cdot n_i}{n}| < n$, so $|\frac{s_{ij}}{n_i} - \frac{l}{n}| < \frac{n}{n_i}$. Because $n$ is fixed, $n/n_i \rightarrow 0$. Thus because $l$ is fixed, $\displaystyle \lim_{i \rightarrow \infty} \frac{s_{ij}}{n_i} = \frac{l}{n}$ for all $j$.

Pulling back to a subsequence $n_{i_j}$ of $n_i$, we have constructed a convergent sequence of $1/k$-geodesics on $X_{n_{i_j}}$, reaching a contradiction. 
\end{proof}

For an example of this convergence, see Section~\ref{vshapesec} and Figure~\ref{vshape}.


\subsection{Vertex Ratios and the Minimizing Index}

We now begin the proof of $minind(X_p) \rightarrow \infty$ by considering an arbitrary sequence of geodesics with even period $n$ on some sequence of prime-gons. Without loss of generality, we let $\gamma_p$ be a sequence of geodesics on $X_p$ for primes $p$, noting that we are really considering closed geodesics on $X_{p_i}$ for some subsequence $p_i$ of the primes.

For each $\gamma_p$, we have a sequence $\{s_{pj}\}_{j = 1}^n$ of $n$ skip numbers. As in the proof of Lemma \ref{convergent}, $|s_{pj} - \frac{\sum\limits_{j = 1}^n s_{pj}}{n}| < n$ 
by Proposition \ref{skips}. Because $s_{pj} \in \mathbb{N}$, there are a finite number of possible deviations each skip number of $\gamma_p$ can have from the average skip number of $\gamma_p$. 
Also as in the proof of Lemma \ref{convergent}, $\sum\limits_{j = 1}^n s_{pj} = l_p \cdot p$ for $l_p \in \mathbb{N}$ and $l_p < n$. Thus, there are also a finite number of possible values of $l_p$. 
Hence for sufficiently large $p$, we may partition the sequence $\gamma_p$ into subsequences where the $n$ skip numbers are of the form $s_{ij} = \frac{lp}{n} + d_j$, where the sum $\sum\limits_{j = 1}^n s_{pj} = lp$ for fixed $l < n$ and the deviation $d_j$ of a skip number from the average skip number is constant across all $\gamma_p$ in a subsequence. 

It now suffices to show that $minind(\gamma_p) \rightarrow \infty$ for such a subsequence. We let $d = \frac{n}{l}$ and $k_j = d \cdot d_j$ so that we may fix the skip number sequence of $\gamma_p$ as follows:

\[\frac{p + k_1}{d}, \frac{p + k_2}{d}, \frac{p + k_3}{d}, \ldots, \frac{p + k_n}{d}\]

Note that $\Sigma_{i = 1}^n k_i = 0$ because $\sum\limits_{j = 1}^n s_{pj} = lp = \frac{np}{d}$.
As an example, the V-shaped geodesics described in Section \ref{vshapesec} have $l = 2$, $n = 4$, $k_1=k_4 = -1$, and $k_2=k_3 = 1$.

We now proceed to consider the proximity of these geodesics to vertices of $X_p$. We encapsulate this phenomenon with the notion of a vertex ratio. 

\begin{defi}
A segment of a closed geodesic on $X_p$ cuts an edge into a clockwise facing part $a$ and a counterclockwise facing part $b$. The $\textit{vertex ratio}$ is defined as $\frac{a}{a + b}$ (see Figure \ref{VertexRatio}).
\label{DefVertexRatio}
\end{defi}

\begin{figure}
\includegraphics[scale = 0.3]{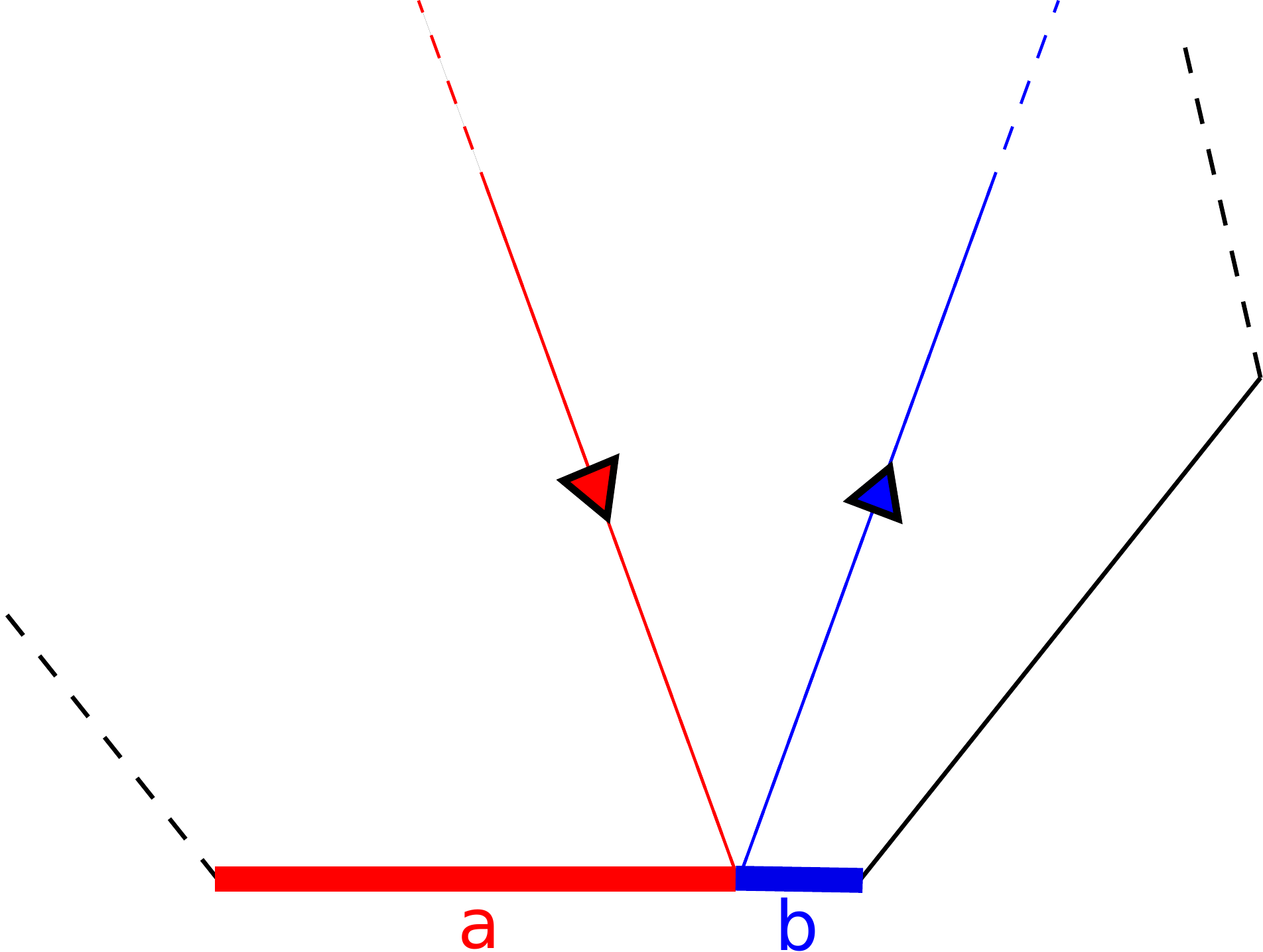}
\caption{The Vertex Ratio $\frac{a}{a + b}$ of the Incoming Geodesic Segment}
\label{VertexRatio}
\end{figure}

For a geodesic $\gamma_p$ as above, we have a sequence of $n$ vertex ratios. We denote the $i^{th}$ vertex ratio of $\gamma_p$ as $v_i(p)$ and $\lim_{p \rightarrow \infty} v_i(p)$ as $v_i^*$, if it exists. 

The next lemma provides the motivation for the remainder of the argument. After proving this lemma, all we need show is that for some $i$, the $i^{th}$ vertex ratio tends towards 0 or 1 so that for some sequence $v_p$ of vertices of $X_p$ (with fixed edge length), the distance of $\gamma_p$ from $v_p$  tends towards 0. 

\begin{lemma} 
\label{converge}
If $v_i^* = 0$ or  $v_i^* = 1$ for some $i$, then the minimizing index of the sequence of geodesics tends towards $\infty$. 
\end{lemma}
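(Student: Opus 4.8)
The plan is to localize the failure of the minimizing property near a vertex that the geodesic approaches. Suppose $v_i^\ast = 0$ (the case $v_i^\ast = 1$ is symmetric, replacing the clockwise part by the counterclockwise part of the edge). Fix the vertex $v_p$ of $X_p$ at which the $i^{th}$ segment of $\gamma_p$ lands, and normalize so that all edges of $X_p$ have a fixed length $\ell$; then the clockwise-facing piece $a_p$ of that edge satisfies $a_p/\ell = v_i(p) \to 0$, so the intersection point $Q_p$ of $\gamma_p$ with that edge satisfies $d(Q_p, v_p) = a_p \to 0$. I would first argue that the two geodesic segments of $\gamma_p$ meeting at $Q_p$ make angles with the edge that are bounded away from $0$ and $\pi$: this follows because, by construction, the skip numbers of $\gamma_p$ are all comparable to $p/d$, so each segment subtends an arc of the circumscribed circle bounded away from $0$ and $2\pi$, forcing the incidence angle $\theta_p$ into a compact subinterval of $(0, \pi/2)$. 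Hence there is a constant $c_0 > 0$, independent of $p$, with $\sin\theta_p \ge c_0$.

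Next I would exhibit an explicitly short non-minimizing subinterval of $\gamma_p$ straddling $Q_p$. Take the points $X_p$ and $Y_p$ on the two segments adjacent to $Q_p$ at distance $a_p$ from $Q_p$ along $\gamma_p$ on each side, so the subcurve $\overline{X_pQ_p}\cup\overline{Q_pY_p}$ has length exactly $2a_p$. Because the two segments alternate between the front and back faces and meet the edge at equal angles $\theta_p$, the vertex $v_p$ lies on the edge at distance $a_p$ from $Q_p$ on the clockwise side; using the front–back identification at $v_p$, the broken path $\overline{X_p v_p}\cup\overline{v_p Y_p}$ is an alternative path in $X_p$ from $X_p$ to $Y_p$. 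A direct computation with the triangles $X_pQ_pv_p$ and $Y_pQ_pv_p$ — both isoceles-ish with the common side $\overline{Q_p v_p}$ of length $a_p$ and the given angle $\theta_p$ at $Q_p$ — shows the path through $v_p$ is strictly shorter, with the length deficit of order $a_p$ times a positive constant depending only on $c_0$. (Alternatively, and more cleanly: since $v_p \ne Q_p$, the strict triangle inequality at the cone point gives $d(X_p, Y_p) < X_pv_p + v_pY_p \le X_pQ_p + Q_pY_p$, so the subcurve of length $2a_p$ through $Q_p$ is not length-minimizing. The angle bound is only needed to control how the length of this non-minimizing subcurve compares to the total length $L_p$.)

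Finally I would convert this into a lower bound on $minind(\gamma_p)$ that blows up. The total length $L_p$ of $\gamma_p$ is the sum of $n$ segment lengths, each a chord of the circumscribed circle subtending an arc comparable to $2\pi/d$; in particular $L_p$ is bounded above and below by positive constants independent of $p$ (after our normalization), say $L_p \le C$. We have produced a subinterval of $\gamma_p$ of length $2a_p$ that fails to be length-minimizing, so $\gamma_p$ is not a $1/k$ geodesic for any $k$ with $L_p/k \ge 2a_p$, i.e.\ $minind(\gamma_p) > L_p/(2a_p) \ge (\text{const})/a_p \to \infty$ since $a_p \to 0$. This proves $minind(\gamma_p) \to \infty$, and combined with Lemma \ref{convergent} it suffices for Theorem \ref{infinity}.

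The main obstacle is the second step: one must be careful that the alternative path through $v_p$ genuinely lives in $X_p$ and is genuinely shorter by an amount that does not degenerate — i.e.\ that the incidence angle $\theta_p$ does not drift to $0$ or $\pi/2$ along the subsequence, which would let the length deficit decay faster than $a_p$ and break the comparison with $L_p$. This is exactly where the fixed-skip-number normalization established before Definition \ref{DefVertexRatio} is essential, since it pins every segment's subtended arc near $2\pi/d$ and hence $\theta_p$ into a fixed compact subinterval of $(0,\pi/2)$.
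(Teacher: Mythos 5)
There is a genuine gap at the heart of your second step: the proposed shortcut through the vertex is \emph{not} shorter than the subcurve through $Q_p$. Place $Q_p$ at the origin with the edge along the $x$-axis and $v_p$ at $(a_p,0)$. The two segments of $\gamma_p$ make equal angles $\theta_p$ with the edge on opposite sides of the normal (specular reflection, with each face projecting to the same planar polygon), so $X_p = a_p(-\cos\theta_p,\sin\theta_p)$ and $Y_p = a_p(\cos\theta_p,\sin\theta_p)$, and the law of cosines gives
\[
X_pv_p + v_pY_p = 2a_p\cos\Bigl(\tfrac{\theta_p}{2}\Bigr) + 2a_p\sin\Bigl(\tfrac{\theta_p}{2}\Bigr) = 2\sqrt{2}\,a_p\sin\Bigl(\tfrac{\theta_p}{2}+\tfrac{\pi}{4}\Bigr) \;\geq\; 2a_p = X_pQ_p + Q_pY_p,
\]
with strict inequality for $\theta_p\in(0,\pi)$. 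So the inequality $X_pv_p+v_pY_p\le X_pQ_p+Q_pY_p$ in your ``cleaner'' chain is reversed, and the direct triangle computation you sketch would reveal the same thing. The strict triangle inequality at the cone point only says the path through $v_p$ is itself non-minimizing; it gives no information about the path through $Q_p$. Worse, your subcurve of half-length $a_p$ is in fact minimizing: its midpoint sits at height $a_p\sin\theta_p$ over the edge, far below the angle bisector at the vertex, so the reflection principle across the original edge is optimal and returns exactly the crossing point $Q_p$.

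The idea you are missing, and the one the paper uses, is that the shortcut crosses the \emph{adjacent edge}, not the vertex, and it only exists for subcurves that are long enough. Normalizing the inradius to $1$, the angle bisector at the vertex meets the perpendicular to the edge at $Q_p$ at height $x_p = 2v_i(p)\to 0$; any symmetric subcurve whose endpoints are at distance greater than $x_p\csc\theta_p$ from $Q_p$ has midpoint $W$ beyond that bisector, hence strictly closer to the adjacent edge, and an ellipse-versus-circle comparison (ellipse with foci at the endpoints through $Q_p$, circle centered at $W$ through $Q_p$) then produces a strictly shorter path through a point of the adjacent edge. This yields $minind(\gamma_p)\ge L_p/(2x_p\csc\theta_p)\to\infty$. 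As a minor separate point, under your fixed-edge-length normalization $L_p$ is not bounded above: each segment is a chord subtending an arc of about $2\pi/d$ in a circle whose radius grows like $p$, so $L_p\to\infty$; this would only strengthen your bound, but it indicates the two scales in the problem (distance to the vertex versus distance to the bisector) need to be tracked more carefully than the proposal does.
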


\begin{proof}
Fix the inradius of our regular $p$-gons to be 1. Let the length of a side of the $p$-gon be $L$.

For any $i$, let $E_i$ be the edge of the $p$-gon that has vertex ratio $v_i(p)$, $Y$ be the point on $E_i$ that splits $E$ into this ratio, and $V$ be the vertex of the $p$-gon such that the distance $YV$ is $v_i(p)*L$.
Let $Y'$ be the point of intersection between the angle bisector of the angle at $V$ and the line passing through $Y$ perpendicular to $E$.  Let $x$ be the distance between $Y$ and $Y'$.

We claim that as $p \to\infty, x \to 0$.

Let $M$ be the midpoint of $E_i$ and $O$ be the center of the inscribed circle. Note that $\overline{OM}$ is an inradius of the $p$-gon, so the length of this segment is 1.  Furthermore, $\overline{OM}$ is perpendicular to $E_i$ and thus parallel to $\overline{YY'}$.
Thus $\triangle Y'YV \sim \triangle OMV$.

By similar triangles, we have $\frac{Y'Y}{OM} = \frac{YV}{MV}$, which simplifies $x = \frac{YV}{L/2}$ using the definition of $x$, $OM = 1$, and that $M$ is the midpoint of $E_i$.
But note that by definition, $\frac{Y'V}{L} = v_i(p)$.
This yields $x = 2v_i(p)$. Thus if ${v_i}^* = 0$, $x=0$. A symmetric argument shows that if ${v_i}^* = 1$, $x = 0$ as well.

Thus we have reduced our problem to showing that if $x = 0$, the minimizing index of the sequence of geodesics tends towards $\infty$.

Let $(p_i)_{i = 0}^\infty$ be an increasing sequence of prime numbers with $p_0 \geq 3$. Let $(\gamma_i)_{i = 0}^\infty$ be a sequence of closed geodesics where $\gamma_i$ is a geodesic on $X_{p_i}$ and such that $x = 0$. We will prove that
\[\lim_{i\to\infty} minind(\gamma_i) = \infty\]

Suppose that for all $p_i$, $\gamma_i$ has period $n$ and skip numbers $\frac{p_i + k_1}{d}, \frac{p_i + k_2}{d}, ..., \frac{p + k_n}{d}$, where $n$, $d$ and $k_1, k_2, ..., k_n$ are integers independent of $i$. We once again take $X_{p_i}$ to have inradius $1$ for all $p_i$.\\

For all $p_i$, let $\overline{P_iQ_i}$ and $\overline{Q_iR_i}$ be two consecutive segments of $\gamma_i$. Let $\overline{B_iC_i}$ be the side of $\chi_{p_i}$ which contains $Q_i$. Let $D_i$ be the other vertex of $\chi_{p_i}$ adjacent to $C_i$. Let $l_i$ be the bisector of $\angle{B_iC_iD_i}$. Let $m_i$ be the line though $Q_i$ perpendicular to $\overline{B_iC_i}$. Let $T_i$ be the intersection of $l_i$ and $m_i$. See Figure $\ref{anglebisectorpf}$ for a diagram.\\

\begin{figure}
\includegraphics[scale=0.5]{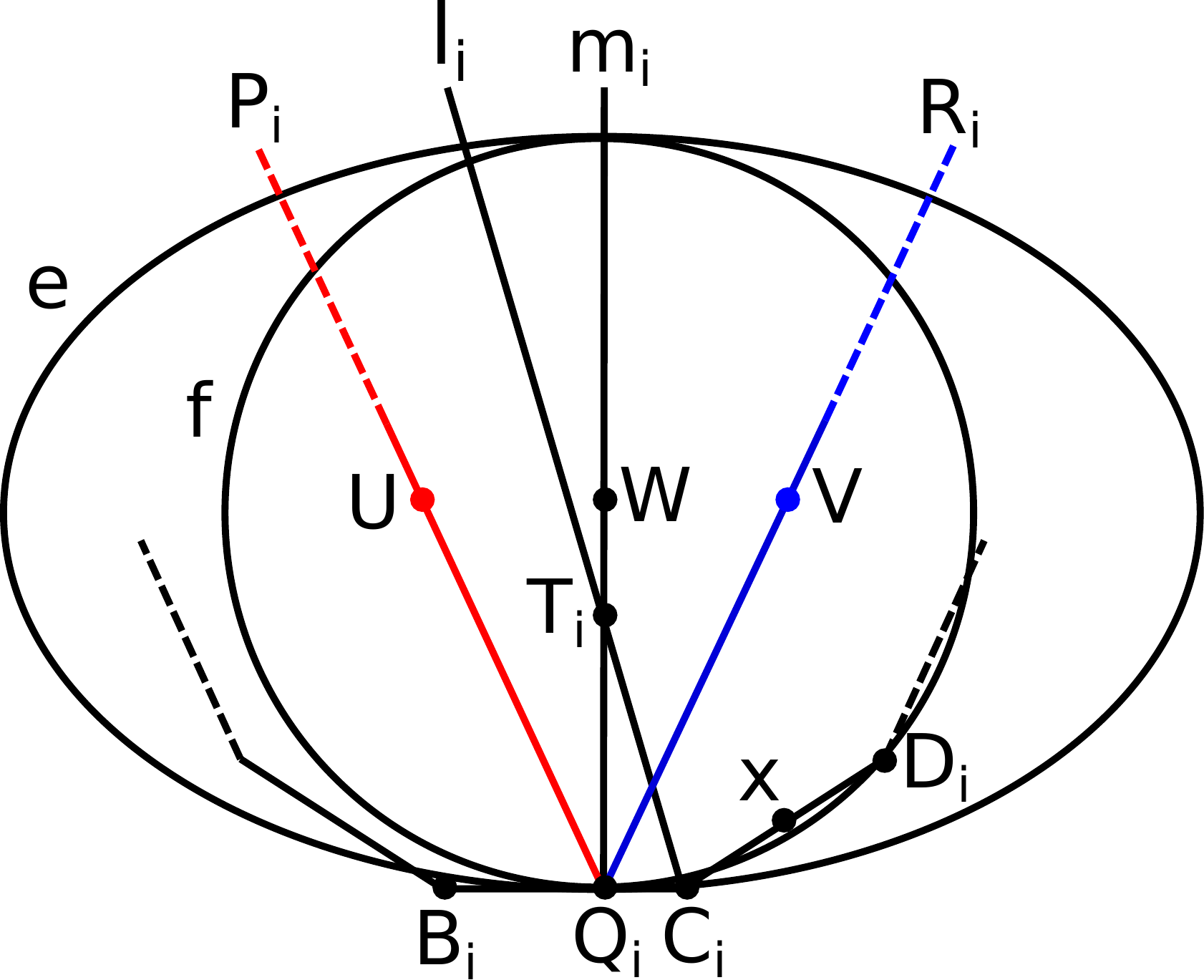}
\caption{Showing that if $x\to 0$, the minimizing index tends towards $\infty$}
\label{anglebisectorpf}
\end{figure}

Let $x_i = \lvert \overline{Q_iT_i} \rvert$. Then $\lim_{i\to\infty} x_i = x = 0$.\\

For all $p_i$, let $\theta_i = \angle{P_iQ_iB_i} = \angle{R_iQ_iC_i}$. Then
\[\lim_{i\to\infty} \theta_i = \frac{2\pi}{d}\]
Since all $\chi_{p_i}$ have inradius $1$, $\chi_{p_i}$ converges to the unit circle, and the length of each segment of $\gamma_i$ converges to a nonzero constant. Meanwhile, $x_i$ converges to $0$. Thus, there exists an integer $j$ sufficiently large that for all $i \geq j$,
\[\lvert \overline{P_iQ_i} \rvert, \lvert \overline{Q_iR_i} \rvert > x_i \csc(\theta_i)\]
Let $L_i$ be the length of $\gamma_i$. We will prove that for all $i \geq j$,
\[minind(\gamma_i) \geq \frac{L_i}{2 x_i \csc(\theta_i)}\]
Fix $i \geq j$. By way of contradiction, suppose $minind(\gamma_i) < \frac{L_i}{2 x_i \csc(\theta_i)}$. Then there exists $m > 2 x_i \csc(\theta_i)$ such that all subcurves of $\gamma_i$ with length less than or equal to $m$ are length-minimizing.\\

Let $U$ be a point on $\overline{P_iQ_i}$ and $V$ be a point on $\overline{Q_iR_i}$ such that $x_i \csc(\theta_i) < \lvert \overline{Q_iU} \rvert = \lvert \overline{Q_iV} \rvert \leq \frac{m}{2}$. Then $\widehat{UV} \subseteq \gamma_i$ must be length-minimizing.\\

Let $W$ be the midpoint of $U$ and $V$. Since $\lvert \overline{Q_iU} \rvert = \lvert \overline{Q_iV} \rvert$ and line $m_i$ is the bisector of $\angle{UQ_iV}$, we know $W$ lies on $m_i$. Since $\lvert \overline{Q_iU} \rvert > x_i \csc(\theta_i)$, we know $\lvert \overline{Q_iW} \rvert > x_i$, so $T_i$ is strictly between $Q_i$ and $W$. This means that $W$ and $D_i$ lie on the same side of line $l_i$. Thus, $W$ is closer to $\overleftrightarrow{C_iD_i}$ then it is to $\overleftrightarrow{B_iC_i}$. In other words, there exists a point $X$ on $\overline{C_iD_i}$ such that $\lvert \overline{WX} \rvert < \lvert \overline{WQ_i} \rvert$.\\

Let $e$ be the ellipse with foci at $U$ and $V$ that passes through $Q_i$, and let $f$ be the circle centered at $W$ that passes through $Q_i$. Then ellipse $e$ contains circle $f$. We know $X$ is in the interior of $f$, so $X$ must be in the interior of $e$. It follows that $\lvert \overline{UX} \rvert + \lvert \overline{XV} \rvert < \lvert \overline{UQ_I} \rvert + \lvert \overline{Q_IV} \rvert$. Thus, $\widehat{UV} \subseteq \gamma_i$ is not length-minimizing, a contradiction.\\

We have proven by contradiction that for all $i \geq j$,
\[minind(\gamma_i) \geq \frac{L_i}{2 x_i \csc(\theta_i)}\]
We know
\begin{align*}
L_i &> 1\\
\lim_{i\to\infty} x_i &= 0\\
\lim_{i\to\infty} \theta_i &= \frac{2\pi}{d}
\end{align*}
Thus,
\[\lim_{i\to\infty} minind(\gamma_i) = \infty\]

\end{proof}

\subsection{Convergence of Vertex Ratios}

We now proceed to focus on the convergence of vertex ratios, with the previous lemma in mind. The following formula relates consecutive vertex ratios, $v_i(p)$ and $v_{i+1}(p)$, in the vertex ratio sequence to the skip number $s$ of the segment joining them and the angle $\theta$ of inclination of that segment from the edge corresponding to $v_i(p)$:
\begin{form}
\label{vertexratio}
\[v_{i + 1}(p) = \csc \left(\frac{2s\pi}{p} - \theta \right) \left( \left(1 - v_i(p) \right)\sin(\theta) - \frac{\cos \left(\theta - \frac{\pi}{n} \right) - \cos \left(\frac{(2s - 1)\pi}{n} - \theta \right)} {2\sin \left(\frac{\pi}{n} \right)} \right)\]
\end{form}

\begin{proof}
Fix the edge length of $X_n$ to be 1, so that we may form a quadrilateral out of the geodesic segment, segments of length $1 - v_i(p)$ and $v_{i + 1}(p)$, and a chord of length $2r\sin(\frac{(s-1)\pi}{p})$, where $r = \frac{1}{2\sin(\pi/p)}$ is the circumradius of $X_p$ (see Figure \ref{quad}). 

\begin{figure}
\centering
\includegraphics[scale = 0.2]{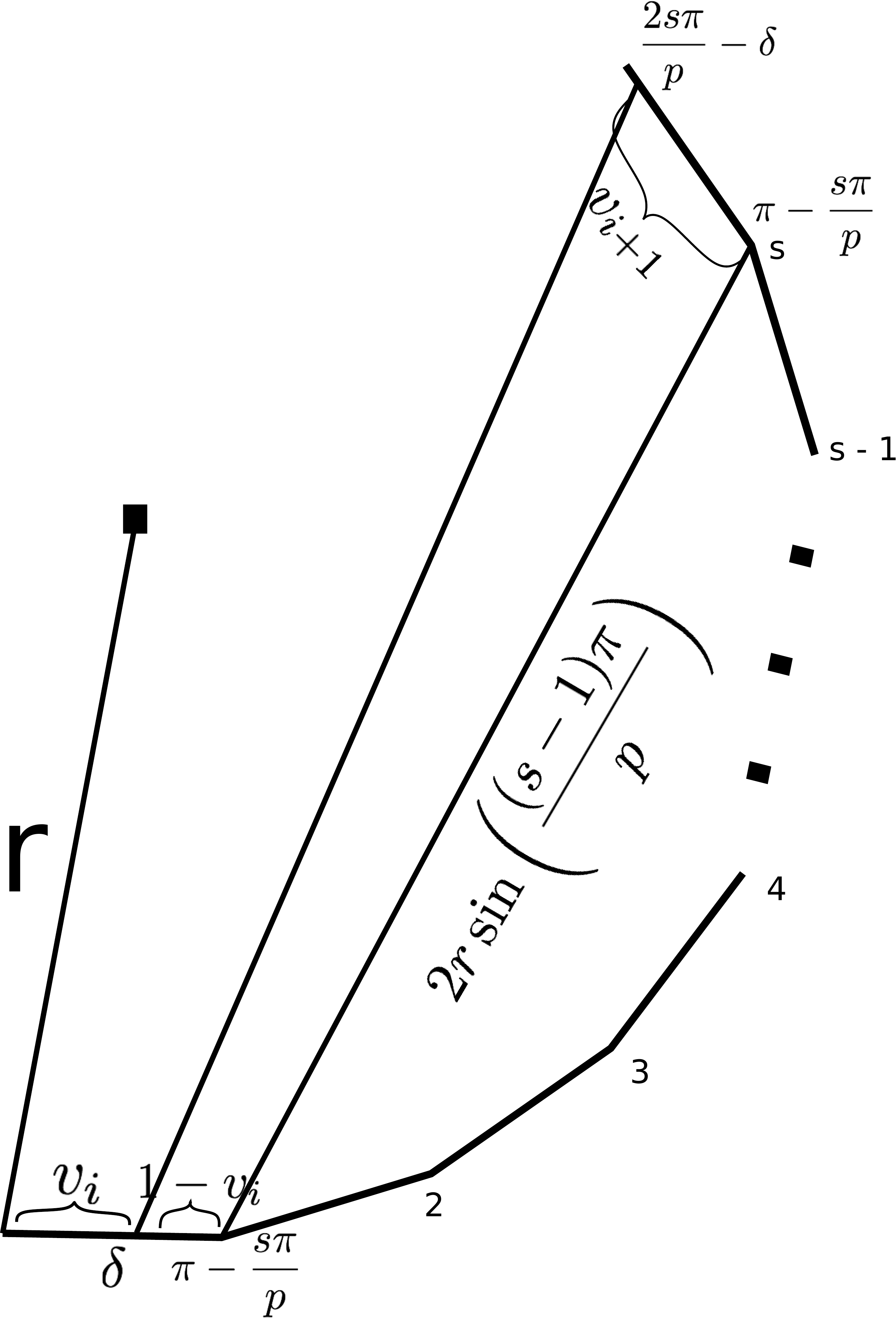}
\caption{Quadrilateral Formed from by the Geodesic Segment, Chord Between Edges, and Vertex Ratio Segments}
\label{quad}
\end{figure}

Using geometric arguments, we may deduce that the angles of the quadrilateral incident to the chord both have measure $\pi - \frac{s\pi}{p}$, while the remaining angle has measure $\frac{2s\pi}{p} -\theta$. In \cite{stack} it is given a formula and a proof for deriving the lengths of the sides of a quadrilateral given the angles and lengths of two sides. From this, we deduce

\[v_{i + 1}(p) = \csc \left(\frac{2s\pi}{p} - \theta \right) \left( \left(1 - v_i(p) \right)\sin(\theta) - \frac{2\sin \left(\frac{(s-1)\pi}{p} \right)}{2\sin \left(\frac{\pi}{p} \right)}\sin \left(\pi - \frac{s\pi}{p} + \theta \right) \right)\]

The desired formula then arises from the trigonometric identities $\sin(\pi - \alpha) = \sin(\alpha)$ and $2\sin(\alpha)\sin(\beta) = \cos(\alpha - \beta) - \cos(\alpha + \beta)$.
\end{proof}

In order to establish a root for the recursive formula above, we let $v_0(p) = 0.5$. This step will be justified later on in Lemma \ref{0.5}. We now proceed to show that $v_i(p)$ converges for each $i$.

First, we compute $\theta$ explicitly as a function of $p$ and the skip number sequence of $\gamma_i$.

The \textit{development} of a geodesic is the reflective shape formed by successively unfolding $X_p$ when we traverse the geodesic as a straight line. By examining the sum of the vectors joining the midpoints of the polygons in the development in a method similar to that of Fuchs \cite{fuchs}, it can be shown that for skip numbers $s_1, s_2, \ldots, s_n$, the angle of inclination from the initial edge is 

\[\arg \left(\sum\limits_{i = 1}^{n/2} \zeta_p^{S_{2i - 1}} + \sum\limits_{i = 1}^{n/2} \zeta_p^{p/2 + S_{2i}} \right) - \frac{\pi}{2}\]

where $S_j = \sum\limits_{k = 1}^j (-1)^{k + 1} s_k$ (see Figure \ref{angdevelopment}). That is, $S_i = \mychi_{2\mathbb{Z} + 1}(i) \frac{p}{d} + \sum\limits_{j = 1}^i (-1)^{j + 1} \frac{k_j}{d}$, where $\mychi_{2\mathbb{Z} + 1}$ is the characteristic function of the odds. 

\begin{figure}
\centering
\includegraphics[scale = 0.24]{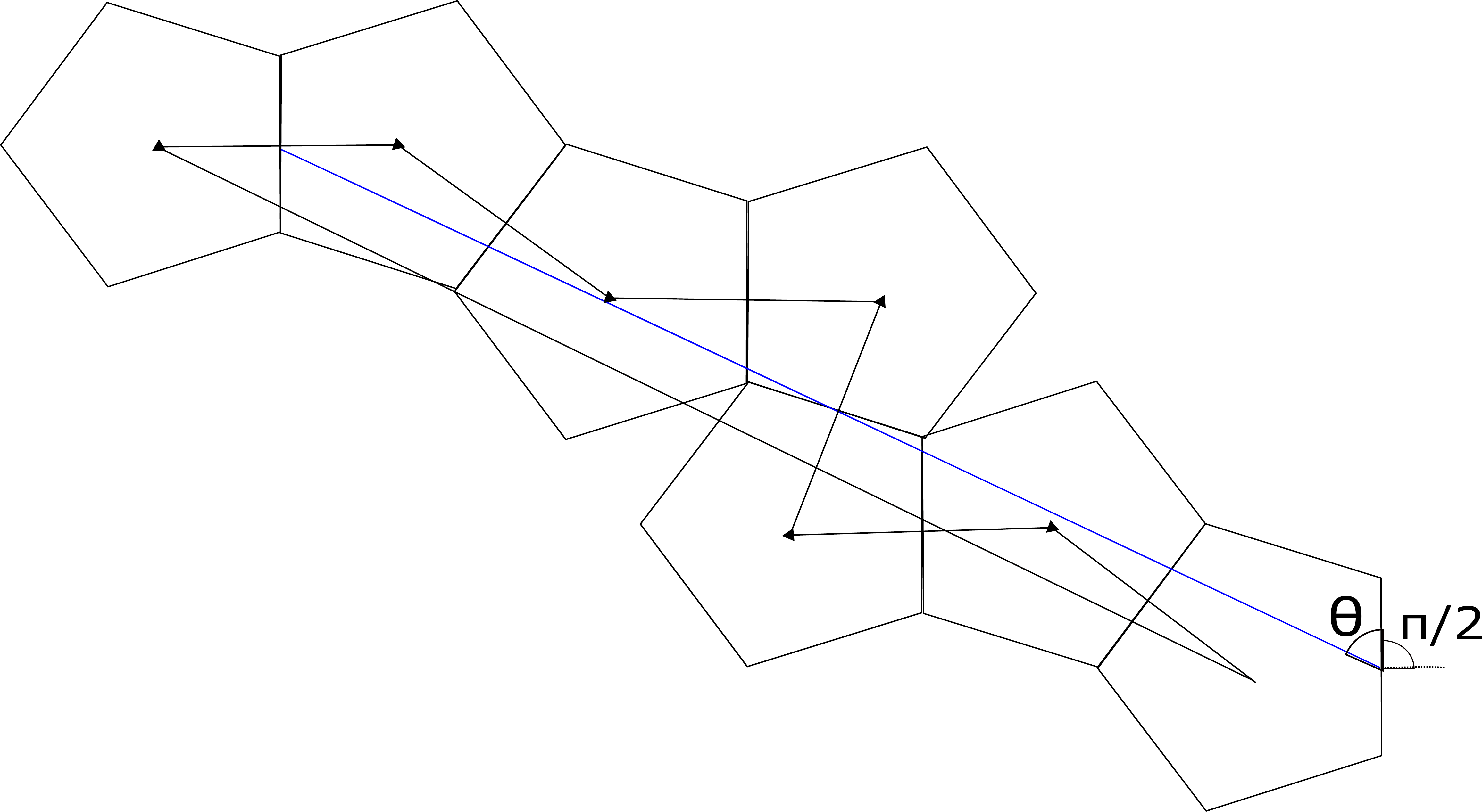}
\caption{Illustration of Computation of Angle of Inclination $\theta$ of Geodesic}
\label{angdevelopment}
\end{figure}

The angle is equivalent to

\[\arctan \left(\frac{\sum\limits_{i = 1}^n (-1)^{i + 1} \sin \left( \frac{2\pi S_i}{p} \right) }{\sum\limits_{i = 1}^n (-1)^{i + 1} \cos \left( \frac{2\pi S_i}{p} \right) } \right) + \frac{\pi}{2}\]



Let us denote this expression for the skip number sequence $\frac{p + k_1}{d}, \frac{p + k_2}{d}, \frac{p + k_3}{d}, \ldots, \frac{p + k_n}{d}$ by $\theta(p)$. 

It follows that 

\[\lim_{p \to \infty} \theta(p) = \arctan \left(\frac{\frac{n}{2}\sin(\frac{2\pi}{d})}{\frac{n}{2}\cos(\frac{2\pi}{d}) - \frac{n}{2}} \right) + \frac{\pi}{2} = \arctan \left(\frac{\sin(\frac{2\pi}{d})}{\cos(\frac{2\pi}{d}) - 1} \right) + \frac{\pi}{2} =
\frac{\pi}{d}\] 

This follows not only algebraically but also geometrically from the fact that the ratio of any skip number of $\gamma_i$ to $p$ tends towards $\frac{1}{d}$, and so on the doubled disk, angle of inclinations of $\lim(\gamma_i)$ intercept arcs of length $2\pi/d$. 

We now proceed to compute $\lim_{p \to \infty} v_i(p)$.


\begin{prop}
\label{lims}
$v_i^* = \lim_{p \to \infty} v_i(p)$ exists for all $i$ and 

\[v_{i + 1}^* = 1 - v_i^* - \frac{k_{i + 1}}{d} + 2\sum\limits_{j = 1}^n \frac{(-1)^{j+1}(n - j + 1)(k_{j + i})}{dn}\]

where we let the $k_i$ sequence be periodic so that $k_i = k_{i + n}$.
\end{prop}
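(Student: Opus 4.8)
The plan is to prove both claims simultaneously by strong induction on $i$, using Formula~\ref{vertexratio} as the recursion and the explicit computation of $\lim_{p\to\infty}\theta(p) = \pi/d$ as the main analytic input. The base case is furnished by the convention $v_0(p) = 0.5$, whose limit trivially exists; alternatively one checks the first step of the recursion directly. For the inductive step, assume $v_i^* = \lim_{p\to\infty} v_i(p)$ exists. In Formula~\ref{vertexratio} the relevant skip number is $s = s_{i+1}(p) = \frac{p + k_{i+1}}{d}$, so that $\frac{2s\pi}{p} = \frac{2\pi}{d} + \frac{2\pi k_{i+1}}{dp} \to \frac{2\pi}{d}$ and $\theta = \theta(p) \to \frac{\pi}{d}$ as $p\to\infty$. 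Hence $\frac{2s\pi}{p} - \theta \to \frac{\pi}{d}$, so $\csc\left(\frac{2s\pi}{p} - \theta\right) \to \csc\left(\frac{\pi}{d}\right)$, which is finite and nonzero; this is what guarantees the limit $v_{i+1}^*$ exists. Taking limits of every factor in Formula~\ref{vertexratio} then yields a closed-form expression for $v_{i+1}^*$ in terms of $v_i^*$, the $k_j$, $d$, and $n$.

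The delicate part of the limit is the bracketed term containing $\frac{\cos(\theta - \frac{\pi}{n}) - \cos(\frac{(2s-1)\pi}{n} - \theta)}{2\sin(\pi/n)}$. Here $\frac{(2s-1)\pi}{n}$ must be read carefully: tracing through the derivation of Formula~\ref{vertexratio}, the quantity that actually appears is a chord length $\frac{\sin((s-1)\pi/p)}{\sin(\pi/p)}$, and as $p\to\infty$ with $s = \frac{p+k_{i+1}}{d}$ we have $(s-1)\pi/p \to \pi/d$ and $\pi/p \to 0$, so this ratio behaves like $\frac{\sin(\pi/d)}{\pi/p} \to \infty$ — it does \emph{not} converge on its own. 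The resolution is that it is multiplied by $\sin(\pi - \frac{s\pi}{p} + \theta)$, and one must expand $\theta(p)$ to first order in $1/p$: writing $\theta(p) = \frac{\pi}{d} + \frac{c}{p} + o(1/p)$ and extracting the coefficient $c$ from the $\arctan$ expression for $\theta(p)$ via a Taylor expansion of $\sin(2\pi S_i/p)$ and $\cos(2\pi S_i/p)$ about $S_i/p \approx \frac{\mychi_{2\mathbb{Z}+1}(i)}{d}$. The first-order terms conspire so that $\sin(\pi - \frac{s\pi}{p} + \theta) = \sin\left(\frac{(something)}{p} + \ldots\right)$ vanishes to first order in a way that exactly cancels the blow-up of the chord ratio, leaving a finite limit. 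This bookkeeping — computing the $1/p$-coefficient of $\theta(p)$ and combining it with the $1/p$-coefficients of the chord term — is where the specific combinatorial sum $2\sum_{j=1}^n \frac{(-1)^{j+1}(n-j+1)k_{j+i}}{dn}$ emerges, using $S_j = \mychi_{2\mathbb{Z}+1}(j)\frac{p}{d} + \sum_{m=1}^j (-1)^{m+1}\frac{k_m}{d}$ and the identity $\sum_{j} c_j = \sum_j (\text{partial sums})$ rearranged with Abel summation to convert the nested sum $\sum_i (-1)^{i+1}(\cdots S_i \cdots)$ into the weighted sum with coefficients $(n-j+1)$.

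I expect the main obstacle to be precisely this asymptotic cancellation: one cannot simply pass to the limit term-by-term in Formula~\ref{vertexratio} because the chord-length factor diverges, so the proof requires a genuine first-order expansion of $\theta(p)$ in $1/p$ and a careful matching of leading-order contributions, with the periodic convention $k_i = k_{i+n}$ needed to make the index shifts in the summation well-defined. Once the $1/p$-coefficient of $\theta(p)$ is pinned down, the rest is algebra: substitute into Formula~\ref{vertexratio}, collect the finite limit, simplify the trigonometric prefactor $\csc(\pi/d)$ against $\sin(\pi/d)$ factors, and verify the answer has the stated form. A useful consistency check, which I would include, is to specialize to the V-shaped geodesics ($n=4$, $d=2$, $k_1=k_4=-1$, $k_2=k_3=1$) and confirm the recursion reproduces their known vertex ratios.
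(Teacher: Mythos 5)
Your proposal follows essentially the same route as the paper's proof: induct on $i$ starting from $v_0(p)=0.5$, pass to the limit in Formula~\ref{vertexratio}, and resolve the indeterminate chord term by extracting the first-order (order $1/p$) behavior of $\theta(p)$ --- the paper does this via L'H\^opital's rule and the computation of $\lim_{p\to\infty}\theta'(p)\,p^2$, which is equivalent to your Taylor coefficient $c$. You also correctly read the $\pi/n$'s in the displayed formula as $\pi/p$'s, consistent with its derivation, so the two arguments coincide in substance.
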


\begin{proof} 
The proof follows from taking the limit as $p$ tends to $\infty$ of Formula \ref{vertexratio} applied to the $v_i(p)$  sequence at hand. We will show the result for the base case $v_1^*$ given that $v_0^*$ exists ($v_0^* = 0.5$). Induction may then be used for $v_i^*$ using the same computations that will follow, only with the use of $v_{i - 1}(p)$ instead of $v_0(p)$ and $k_{j + (i - 1)}$ instead of $k_j$ for all $k_j$. $\theta(p)$ is also modified, but this will not change the computations.

Applying Formula \ref{vertexratio}, we have 

\[
v_1(p) =  \csc \left(\frac{2(p + k_1)\pi}{pd} - \theta(p) \right) \left( \left(1 - v_0(p) \right)\sin \left(\theta(p) \right) - 
 \frac{\cos \left(\theta(p) - \frac{\pi}{p} \right) - \cos \left(\frac{ \left(2(p + k_1) - d \right)\pi}{dp} - \theta(p) \right)} {2\sin \left(\frac{\pi}{p} \right)} \right)
\]

Then,

\[v_1^* = \lim_{p \to \infty} \csc \left(\frac{2(p + k_1)\pi}{pd} - \theta(p) \right) \left( \left(1 - v_0(p) \right)\sin \left(\theta(p) \right) - \frac{\cos \left(\theta(p) - \frac{\pi}{p} \right) - \cos \left(\frac{ \left(2(p + k_1) - d \right)\pi}{dp} - \theta(p) \right)} {2\sin \left(\frac{\pi}{p} \right)} \right)  \]

\[= 1 - v_0^* - \lim_{p \to \infty} \frac{\theta^\prime(p)*p^2}{\pi} - \frac{k_1}{d}\] 

Now we show $\lim_{p \to \infty} \theta ^\prime (p)*p^2$ exists by computing it: 

\[\theta(p) = \arctan \left(\frac{\sum\limits_{i = 1}^n (-1)^{i + 1} \sin \left( \frac{2\pi S_i}{p} \right) }{\sum\limits_{i = 1}^n (-1)^{i + 1} \cos \left( \frac{2\pi S_i}{p} \right) } \right) + \frac{\pi}{2},\]
\[S_i = \mychi_{2\mathbb{Z} + 1}(i) \frac{p}{d} + \sum\limits_{j = 1}^i (-1)^{j + 1} \frac{k_j}{d} \Rightarrow\]

\[
\Rightarrow \lim_{p \to \infty} \theta ^\prime (p)*p^2 =
  \frac{-2\pi \sum\limits_{j = 1}^n (-1)^{j+1}(n - j + 1)(k_j)}{dn} \]

Thus we may conclude that $v_1^*$ exists and 

\[v_1^* = 1 - v_0^* - \frac{k_1}{d} + 2\sum\limits_{j = 1}^n \frac{(-1)^{j+1}(n - j + 1)(k_j)}{dn}\]

and by induction conclude that $v_i^*$ exists for all $i$ and

\[v_{i + 1}^* = 1 - v_i^* - \frac{k_{i + 1}}{d} + 2\sum\limits_{j = 1}^n \frac{(-1)^{j+1}(n - j + 1)(k_{j + i})}{dn}\]
\end{proof}

We now build the machinery to justify the step of allowing $v_0(p) = 0.5$.

\begin{lemma} 
\label{0.5}
It is sufficient to prove $v_i^* = 0$ or $1$ for some $i$ with $v_0(p) = 0.5$.
\end{lemma}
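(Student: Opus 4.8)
The statement to prove is that it suffices to establish $v_i^* \in \{0,1\}$ for some $i$ under the \emph{normalization} $v_0(p) = 0.5$, rather than under the true initial vertex ratio of the geodesic $\gamma_p$. The key observation is that Formula \ref{vertexratio} is an \emph{affine} recursion in the vertex ratios: writing $v_{i+1}(p) = -c_i(p)\, v_i(p) + b_i(p)$ where $c_i(p) = \csc(2s\pi/p - \theta)\sin(\theta)$ and $b_i(p)$ is the remaining term, the coefficient $c_i(p)$ does not depend on the value of $v_0(p)$ — only on the skip numbers and the angle $\theta(p)$, which in turn are intrinsic geometric data of $\gamma_p$. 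So the plan is first to record that two solutions of the recursion with different seeds differ by a term that propagates multiplicatively through the $c_i$'s.

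Concretely, I would argue as follows. Fix a convergent subsequence as set up before the lemma, with skip numbers $\frac{p+k_1}{d},\ldots,\frac{p+k_n}{d}$. Let $\tilde v_i(p)$ denote the \emph{true} vertex ratio sequence of $\gamma_p$ (so $\tilde v_0(p)$ is whatever the geodesic actually dictates, and $\tilde v_n(p) = \tilde v_0(p)$ since $\gamma_p$ is closed of period $n$), and let $v_i(p)$ denote the sequence obtained by running the same recursion with the artificial seed $v_0(p) = 0.5$. Since both satisfy $w_{i+1}(p) = -c_i(p) w_i(p) + b_i(p)$ with the \emph{same} $c_i(p)$ and $b_i(p)$, the difference $\delta_i(p) := \tilde v_i(p) - v_i(p)$ satisfies the homogeneous recursion $\delta_{i+1}(p) = -c_i(p)\,\delta_i(p)$, hence $\delta_n(p) = \left(\prod_{i=0}^{n-1}(-c_i(p))\right)\delta_0(p)$. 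The closure condition $\tilde v_n(p) = \tilde v_0(p)$ together with $v_n(p), v_0(p)$ being \emph{computable} quantities then pins down $\delta_0(p)$ in terms of the product $\prod_i c_i(p)$ and $v_n(p) - v_0(p)$: explicitly $\delta_0(p)\bigl(1 - \prod_i(-c_i(p))\bigr) = v_n(p) - v_0(p)$, provided $\prod_i(-c_i(p)) \neq 1$. I would then take $p\to\infty$: since $s/p \to 1/d$ and $\theta(p) \to \pi/d$, each $c_i(p) \to \csc(2\pi/d - \pi/d)\sin(\pi/d) = 1$, so the naive limit of the product is $\pm 1$ and the argument needs care — this is the main obstacle (see below). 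Granting that $\delta_i(p) \to 0$ for every $i$ (or at least that $\limsup|\delta_i(p)|$ is controlled), we conclude $\tilde v_i^* = v_i^*$ for all $i$, so proving $v_i^* \in \{0,1\}$ for the normalized sequence indeed establishes it for the true geodesic, and Lemma \ref{converge} then applies to $\gamma_p$ itself.

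The delicate point — and the step I expect to be the real work — is controlling the product $\prod_{i=0}^{n-1} c_i(p)$ as $p \to \infty$, since each factor tends to $1$ and we need to know whether the product approaches $1$, $-1$, or oscillates, and at what rate, in order to see that $\delta_0(p)$ (hence all $\delta_i(p)$) tends to $0$. This requires the same second-order expansion of $\theta(p)$ in powers of $1/p$ that already appears in the proof of Proposition \ref{lims} (the quantity $\lim_{p\to\infty}\theta'(p)p^2$), combined with the expansion $2s\pi/p - \theta(p) = \pi/d + O(1/p)$, to get $c_i(p) = 1 + O(1/p)$ with an explicit leading term; then $\prod_i c_i(p) = 1 + O(1/p)$, and one checks the sign pattern of $\prod(-c_i(p)) = (-1)^n\prod c_i(p)$ — recalling $n$ is even, so this is $+1 + O(1/p)$, meaning $1 - \prod(-c_i(p)) = O(1/p) \to 0$. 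Since the denominator goes to zero, one cannot immediately conclude $\delta_0(p)\to 0$ from boundedness of the numerator alone; instead I would argue that $v_n(p) - v_0(p)$ is itself $O(1/p)$ (because with the fixed seed $v_0(p) = 0.5$, the computation in Proposition \ref{lims} shows $v_n(p)$ differs from a limiting value, and by periodicity of the $k_i$ the limiting value of $v_n^*$ equals $v_0^* = 0.5$ — wait, one must verify this) so that numerator and denominator vanish at comparable rates and the ratio stays bounded; alternatively, and more cleanly, I would sidestep the exact closure analysis by noting it suffices to bound $|\delta_0(p)|$: if $|\delta_0(p)|$ were bounded below, then since $|\prod(-c_i(p))| \to 1$ the sequence $\delta_i(p)$ would not decay, contradicting that both $\tilde v_i(p)$ and $v_i(p)$ lie in $[0,1]$ only if... — in short, the honest path is the quantitative one, and I would carry out the $1/p$-expansion of the $c_i(p)$ and of $v_n(p)-v_0(p)$ explicitly to conclude $\delta_i(p) \to 0$, which is exactly the statement that normalizing $v_0(p)$ to $0.5$ changes none of the limits $v_i^*$.
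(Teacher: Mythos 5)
Your strategy is to show that the limits $v_i^*$ do not depend on the seed of the recursion, i.e.\ that $\delta_i(p) := \tilde v_i(p) - v_i(p) \to 0$. This cannot work, because the statement is false. The coefficient of $v_i(p)$ in Formula~\ref{vertexratio} is $-\sin\theta_i/\sin\theta_{i+1}$, where $\theta_j$ is the angle of incidence at the $j$-th edge crossing; over one period the product telescopes to $(-1)^n = 1$ exactly (not merely in the limit), since the angles are $n$-periodic and $n$ is even. So the homogeneous recursion $\delta_{i+1} = -c_i\delta_i$ admits a full one-parameter family of $n$-periodic solutions, your closure identity $\delta_0(p)\bigl(1-\prod_i(-c_i(p))\bigr) = v_n(p)-v_0(p)$ reads $0=0$, and $\delta_0(p)$ is not determined at all. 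This degeneracy is not an accident of the algebra: it reflects the geometric fact that periodic billiard trajectories on a rational polygon come in parallel families, so for each $p$ one may translate $\gamma_p$ perpendicular to itself by a fixed offset and obtain another closed geodesic with the same skip numbers but with $\tilde v_0(p)$ bounded away from $0.5$. For such a sequence $\delta_0(p)\not\to 0$ and $\tilde v_i^* \neq v_i^*$; the quantitative $1/p$-expansion you defer to is chasing a limit that does not exist in the form you need.

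What the lemma actually requires, and what the paper supplies, is different in two respects. First, one must know that the normalized sequence with $v_0(p)=0.5$ is realized by an honest closed geodesic, not just by a formal solution of the recursion; this follows from the reflectional-symmetry result of Davis et al., which gives a palindromic skip number sequence and lets one translate any closed geodesic within its development to pass through an edge midpoint. Second, one transfers the conclusion back to the original geodesic not by showing the limits agree, but by exploiting the symmetry $v_i(p) = 1 - v_{n-i}(p)$ of the midpoint-normalized curve: if $v_i^* = 0$ then also $v_{n-i}^* = 1$, and since a perpendicular translation moves \emph{all} vertex ratios monotonically in a consistent direction, the original geodesic satisfies either $w_i(p) < v_i(p) \to 0$ or $w_{n-i}(p) > v_{n-i}(p) \to 1$. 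Either way some vertex ratio of the actual geodesics tends to $0$ or $1$, which is all that Lemma~\ref{converge} needs. Your proposal contains neither the justification that the normalization corresponds to a geodesic nor this transfer mechanism, and the mechanism it does propose breaks down at exactly the point you flag as ``the real work.''
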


\begin{proof}
Diana Davis et al. \cite{davis} prove that all periodic billiard trajectories on the pentagon have reflectional symmetry, and their argument extends to the general odd-gon and thus the general prime-gon. Thus, for any closed geodesic on $X_p$, we can allow the development to have symmetry about a rotation of $180$ degrees, with a palindromic skip number sequence (that is, $s_i = s_{n - i}$ for a period-$n$ geodesic). 

Choose a starting point on a closed geodesic $\gamma$ that yields a palindromic skip number sequence, and let the angle of reflection from the starting edge $e$ of $X_p$ be $\theta$. Then a line $l$ passing through the midpoint of $e$ with angle of inclination $\theta$ would lie entirely inside the development of $\gamma$. This is because $l$ lies between $\gamma$ and $\gamma^\prime$, the image of $\gamma$ when we rotate the development $180$ degrees about its center. See Figure \ref{development} for an illustration of this proof. 

\begin{figure}
\includegraphics[scale = 0.35]{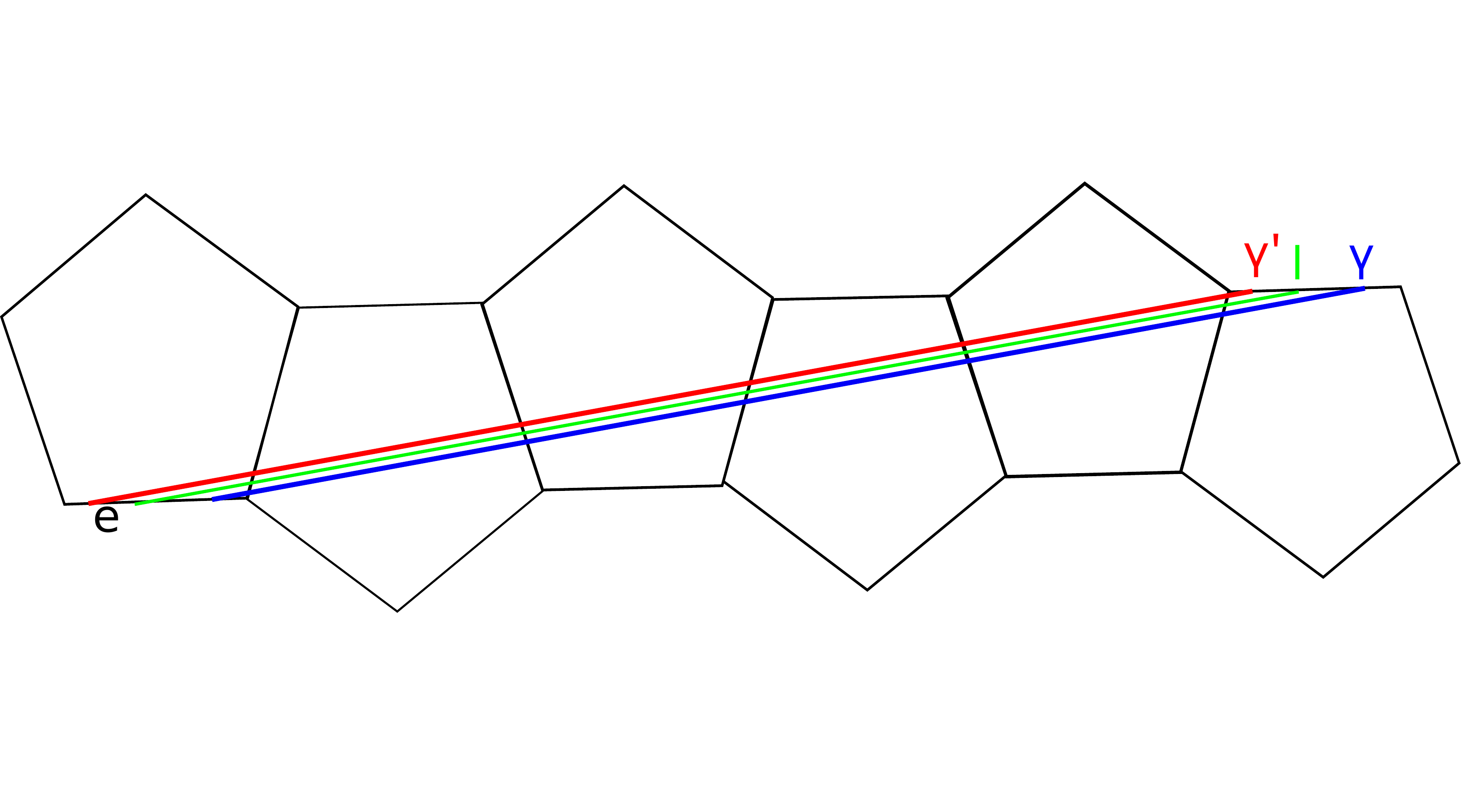}
\caption{A Visualization of Translating a Geodesic in its Development} 
\label{development}
\end{figure}


Thus, a closed geodesic with starting point at a particular edge of $X_p$ can be translated to pass through the midpoint of that edge if and only if the skip number sequence is palindromic, where the translation preserves the skip number sequence. Because every closed geodesic on $X_p$ can be translated to pass through a midpoint, we can assume that the skip number sequence $\{\frac{p + k_i}{d}\}_{i = 1}^n$ of $\gamma_p$ is rotated to be palindromic ($k_i = k_{n - i + 1}$) and that $\gamma_p$ is translated to pass through a midpoint with $v_0(p) = 0.5$.

In this case, $v_i(p) = 1 - v_{n - i}(p)$ for all $i$ by the symmetry of the closed geodesic. Thus, if we prove $v_i^* = 0$ for some $i$ in the particular case where $v_0(p) = 0.5$, $v_{n-i}^* = 1 - v_i^* = 1$. Now let $\gamma_i^\prime$ be a translated version of $\gamma_i$. Let $w_i(p)$ be the sequence of vertex ratios for $\gamma_i^\prime$, with $w_0(p) > v_0(p) = 0.5$ or $w_0(p) < v_0(p) = 0.5$. For each $p$, either $w_i(p) > v_i(p)$ and $w_{n - i}(p) > v_{n - i}(p)$ or $w_i(p) < v_i(p)$ and $w_{n - i}(p) < v_{n - i}(p)$, depending on the direction of translation of $\gamma_p$ to $\gamma_p^\prime$ (this is determined by whether $w_0(p) > 0.5$ or $w_0(p) < 0.5$).
This can be seen by traversing $\gamma_i^\prime$ in the reverse direction and noting that the $i^{th}$ vertex ratio encountered in this clockwise traversal is one minus the $(n - i)^{th}$ vertex ratio encountered in the counterclockwise one, where a vertex ratio in the clockwise direction is defined to be $\frac{b}{a + b}$ rather than $\frac{a}{a + b}$ (preserving the geometry of the notion in this dual correspondence). Then, $w_i(p) > v_i(p) \Rightarrow w_{n - i}(p) > v_{n - i}(p)$ and $w_i(p) < v_i(p) \Rightarrow w_{n - i}(p) < v_{n - i}(p)$ follow from reflection symmetry, since the $i^{th}$ vertex ratio in the clockwise orientation decreases when the $i^{th}$ vertex ratio in the counterclockwise orientation increases, and vice versa, depending on the direction of translation. 


Thus still either $w_i^* = 0$ or $w_{n - i}^* = 1$, up to a subsequence of the convergent geodesics where the direction of translation is the same. Note that the same reasoning applies if $v_i^* = 1$ by interchanging $i$ and $n - i$.

Breaking the sequence $\gamma_p^\prime$ into two subsequences, one where $w_0(p) < 0.5$ and one where $w_0(p) > 0.5$, there is still a sequence of vertices $v_p$ of $X_p$ where the distance of $\gamma_i^\prime$ from $v_p$ tends towards 0 upon fixing the edge length, and so the proof of Lemma \ref{converge} still applies. 
\end{proof}

\subsection{Properties of Vertex Ratio Limits}

We thus focus on proving $v_i^* = 0$ or $1$ for some $i$, with $v_0(p) = 0.5$ for all $p$ and a palindromic skip number sequence $\{\frac{p + k_i}{d}\}_{i = 1}^n$. The following corollaries follow from Proposition \ref{lims} and will be used in the proof.

\begin{Corollary} 
\label{3lims}
For all $i$,
$$v_{i - 1}^* + v_{i + 1}^* = 2(1 - v_i^*) + \frac{k_i - k_{i + 1}}{d}$$
\end{Corollary}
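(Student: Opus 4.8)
The plan is to derive the identity directly from the closed-form recursion in Proposition~\ref{lims}, using nothing more than careful bookkeeping with the periodic extension of the $k_i$ sequence. Write $A_i := 2\sum_{j=1}^{n}\frac{(-1)^{j+1}(n-j+1)k_{j+i}}{dn}$, so that Proposition~\ref{lims} reads $v_{i+1}^* = 1 - v_i^* - \frac{k_{i+1}}{d} + A_i$. Applying this same formula with index shifted down by one gives $v_i^* = 1 - v_{i-1}^* - \frac{k_i}{d} + A_{i-1}$, i.e.\ $v_{i-1}^* = 1 - v_i^* - \frac{k_i}{d} + A_{i-1}$. Adding the two displayed expressions for $v_{i+1}^*$ and $v_{i-1}^*$ yields
\[
v_{i-1}^* + v_{i+1}^* = 2(1 - v_i^*) - \frac{k_i + k_{i+1}}{d} + A_{i-1} + A_i .
\]
Comparing with the target identity $v_{i-1}^* + v_{i+1}^* = 2(1-v_i^*) + \frac{k_i - k_{i+1}}{d}$, it remains exactly to show
\[
A_{i-1} + A_i = \frac{2k_i}{d},
\]
which is a purely combinatorial statement about the periodic sequence $(k_j)$.

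The main step, then, is evaluating $A_{i-1} + A_i$. First I would reindex both sums so they range over a common set of indices modulo $n$: since $k$ is $n$-periodic, in $A_i = \frac{2}{dn}\sum_{j=1}^n (-1)^{j+1}(n-j+1)k_{j+i}$ substitute $m = j+i \pmod n$, and similarly $m = j+i-1$ in $A_{i-1}$, taking care that the coefficient $(-1)^{j+1}(n-j+1)$ gets re-expressed in terms of $m$. After this the sum $A_{i-1}+A_i$ becomes $\frac{2}{dn}\sum_{m} c_m k_m$ for explicit integer coefficients $c_m$ coming from adding the two shifted weight patterns. The expectation is that, using $\sum_{j=1}^n (-1)^{j+1} k_j = 0$ (which holds because $\sum k_i = 0$ when $n$ is even, and in fact the alternating sum vanishes by the palindromic symmetry $k_i = k_{n-i+1}$ — or one can verify it directly from $S_n$ closing up), almost all of the $c_m k_m$ terms telescope or cancel in pairs, leaving only a multiple of $k_i$. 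Concretely, the two triangular weight sequences $(n-j+1)$ starting at shifted positions differ by a constant plus a single "wrap-around" jump of size $n$ at the seam, and that jump of size $n$, divided by $n$, is what produces the clean $\frac{2k_i}{d}$.

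The step I expect to be the genuine obstacle is handling this wrap-around/seam term correctly: because $(n-j+1)$ is not itself periodic, shifting the index by $1$ introduces a boundary correction of magnitude $n$ at exactly one value of $m$, and one must be scrupulous about which residue that is and what sign it carries. This is where an off-by-one error would silently break the identity, so I would double-check it on the worked example already in the paper — the V-shaped geodesics with $n=4$, $l=2$, $k_1=k_4=-1$, $k_2=k_3=1$ — by computing $A_0, A_1, A_2, A_3$ explicitly and confirming $A_{i-1}+A_i = \frac{2k_i}{d}$ for each $i$. Once the seam term is pinned down and the alternating-sum vanishing is invoked to kill the remaining bulk terms, the corollary follows immediately by the addition computation above. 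The whole argument is elementary; its only delicacy is the discrete bookkeeping, so in the write-up I would present the reindexing cleanly and isolate the seam correction as its own short sublemma rather than burying it in a chain of $\sum$ manipulations.
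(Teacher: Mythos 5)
Your argument is correct, and it takes a genuinely different route from the paper's. The paper applies Proposition \ref{lims} once in the forward direction and once to the \emph{reversed} traversal (which expresses $1-v_{i-1}^*$ in terms of $1-v_i^*$ with the reversed weights $k_{n-j+i+1}$), subtracts the two, and reduces the corollary to showing that the forward and reversed weighted sums agree; that is proved by induction on $i$, with the palindromic symmetry $k_j=k_{n+1-j}$ furnishing the base case. You instead apply the forward recursion at the two consecutive indices $i-1$ and $i$ and add, reducing everything to $A_{i-1}+A_i=\tfrac{2k_i}{d}$. That identity is true and your outline of its proof is sound: after reindexing, the endpoint $m=i$ carries weight $n$ (from $A_{i-1}$) and the wrapped endpoint $m=i+n$ carries weight $-1$ acting on $k_{i+n}=k_i$, for a seam contribution of $(n-1)k_i$, while on the overlap $i<m<i+n$ the two triangular weights collapse to the alternating signs $(-1)^{m-i+1}$, contributing $\sum_{r=1}^{n-1}(-1)^{r+1}k_{r+i}=k_i$ once one knows the full alternating sum $\sum_{r=1}^{n}(-1)^{r+1}k_{r+i}$ vanishes; the total is $\tfrac{2}{dn}\cdot nk_i=\tfrac{2k_i}{d}$, and the numbers check on the V-shaped example ($A_0,A_1,A_2,A_3=-1,0,1,0$). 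Your version is more mechanical and makes the origin of the $\tfrac{2k_i}{d}$ term transparent, whereas the paper's reversal argument encodes the geometric fact that $\theta'(p)$ is independent of orientation. One correction to your justification of the needed vanishing: it does \emph{not} follow from $\sum_j k_j=0$ that $\sum_j(-1)^{j+1}k_j=0$; it does follow from the palindromic normalization $k_j=k_{n+1-j}$ with $n$ even (in force here by Lemma \ref{0.5}), which is exactly the hypothesis the paper's own proof leans on, so cite that and drop the other justification.
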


Corollary \ref{3lims} establishes a relationship between consecutive vertex ratio limits. In the arguments to come, it will be invaluable in deducing the patterns a skip number sequence exhibits. Namely, it will allow us to prove a result concerning the appearances of $v_i^* = 0.5$ as a vertex ratio limit. The following corollary of Proposition \ref{lims} also aids in this effort,  establishing an arithmetic relationship between alternating skip numbers. 

\begin{Corollary}
\label{arithmetic}
$v_0^*, v_2^*, v_4^*, \ldots, v_n^*$ and $v_1^*, v_3^*, v_5^*, \ldots, v_{n + 1}^*$ form arithmetic sequences when identified in the group $\mathbb{R}/\mathbb{Z}$ (in the group $\mathbb{Q}/\mathbb{Z}$, in fact, for it is clear from Proposition \ref{lims} that if $v_0^* = 0.5$ then all $v_i^*$ are rational) . That is, $v_{i + 2}^* - v_i^* = v_i^* - v_{i - 2}^*$ in $\mathbb{R}/\mathbb{Z}$ (or equivalently, $v_{i + 2}^* - v_i^* \equiv v_i^* - v_{i - 2}^* (mod\ 1))$  for all $i$.
\end{Corollary}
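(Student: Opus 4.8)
The plan is to reduce the statement to an elementary congruence among the shift parameters $k_j$, using Corollary~\ref{3lims} as essentially the only input. An arithmetic progression in $\mathbb{R}/\mathbb{Z}$ is exactly a sequence whose consecutive differences agree modulo $1$, so the assertion $v_{i+2}^* - v_i^* \equiv v_i^* - v_{i-2}^* \pmod 1$ is equivalent to
\[
v_{i+2}^* + v_{i-2}^* - 2v_i^* \in \mathbb{Z}.
\]
It therefore suffices to evaluate this second difference exactly and check that it is an integer; doing so for every $i$ handles the even-indexed chain $v_0^*, v_2^*, \dots$ and the odd-indexed chain $v_1^*, v_3^*, \dots$ simultaneously (indices and parameters read with the periodicity conventions of Proposition~\ref{lims} and Corollary~\ref{3lims}).

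First I would write down Corollary~\ref{3lims} at the three indices $i-1$, $i$, and $i+1$. Solving the instance at $i+1$ for $v_{i+2}^*$, solving the instance at $i-1$ for $v_{i-2}^*$, adding these, and then substituting the value of $v_{i+1}^* + v_{i-1}^*$ given by the instance at $i$, every vertex-ratio limit except $v_i^*$ cancels, and that one cancels against the $-2v_i^*$, leaving
\[
v_{i+2}^* + v_{i-2}^* - 2v_i^* \;=\; \frac{k_{i-1} - 3k_i + 3k_{i+1} - k_{i+2}}{d}.
\]
This step is pure bookkeeping with three copies of a single identity; no new geometry enters.

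The one genuine observation is that all the $k_j$ lie in a common residue class modulo $d$. Indeed the skip-number sequence of $\gamma_p$ is precisely $\tfrac{p+k_1}{d}, \dots, \tfrac{p+k_n}{d}$, and a skip number is by definition an integer, so $d \mid p + k_j$, i.e.\ $k_j \equiv -p \pmod d$ for every $j$. Since the coefficient vector $(1,-3,3,-1)$ of the numerator above sums to $0$, we get $k_{i-1} - 3k_i + 3k_{i+1} - k_{i+2} \equiv (-p)(1-3+3-1) \equiv 0 \pmod d$, so the right-hand side of the displayed identity is an integer. This is exactly the claimed relation $v_{i+2}^* - v_i^* \equiv v_i^* - v_{i-2}^* \pmod 1$. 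The parenthetical rationality remark needs nothing more: Proposition~\ref{lims} expresses $v_{i+1}^*$ as $1 - v_i^*$ plus a rational number, so starting from $v_0^* = 0.5$ an induction gives $v_i^* \in \mathbb{Q}$ for all $i$, which places the sequences in $\mathbb{Q}/\mathbb{Z}$.

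The only point requiring any thought is recognizing the congruence $k_j \equiv -p \pmod d$: without it the displayed second difference is a priori only a rational number with denominator dividing $d$, and the $\mathbb{R}/\mathbb{Z}$ statement would not follow. Everything else is a two-line linear manipulation of Corollary~\ref{3lims}. (Alternatively one could bypass Corollary~\ref{3lims} and compute the second difference directly from Proposition~\ref{lims}, which amounts to showing that the second difference of $C_i := \frac{2}{dn}\sum_{j=1}^{n}(-1)^{j+1}(n-j+1)k_{j+i}$ is an integer; this works too, but the cyclic reindexing of the weights $(-1)^{j+1}(n-j+1)$ is messier than the route above, so I would not take it.)
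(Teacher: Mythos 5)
Your proof is correct, but it takes a genuinely different route from the paper's. The paper proves this corollary directly from Proposition~\ref{lims}: it expresses $v_{2i}^*$ through telescoping sums of the weighted terms $(-1)^{j+1}(n-j+1)k_{j+i}$, computes the second difference $(v_{2(i+1)}^* - v_{2i}^*) - (v_{2i}^* - v_{2(i-1)}^*)$ explicitly, and reduces it to $0$ in $\mathbb{R}/\mathbb{Z}$ using the integrality of $\frac{k_a - k_b}{d}$ together with the palindromic condition $k_l = k_{n+1-l}$ (and the standing assumption $n \geq 4$); Corollary~\ref{3lims} plays no role there, and the two corollaries are derived independently from Proposition~\ref{lims}. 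You instead combine three instances of Corollary~\ref{3lims} (at indices $i-1$, $i$, $i+1$) to obtain the exact identity $v_{i+2}^* + v_{i-2}^* - 2v_i^* = \frac{k_{i-1} - 3k_i + 3k_{i+1} - k_{i+2}}{d}$, whose right-hand side equals $(s_{i-1}-s_i) - 2(s_i - s_{i+1}) + (s_{i+1}-s_{i+2}) \in \mathbb{Z}$ since skip numbers are integers --- the same integrality fact the paper invokes, packaged as the congruence $k_j \equiv -p \pmod{d}$. Your route is substantially shorter, handles both parity classes at once, and pushes the dependence on the palindromic symmetry into the proof of Corollary~\ref{3lims} rather than repeating it; the only cost is that Corollary~\ref{arithmetic} now logically depends on Corollary~\ref{3lims}. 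Your justification of the parenthetical rationality remark agrees with the paper's (it is immediate by induction from Proposition~\ref{lims}).
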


We now work our way towards a proof of $v_i^* = 0$ or $1$ for some $i$ by contradiction. We make extensive use of the above two corollaries. The final piece of machinery we present before the proof is a restriction concerning the presence of $0.5$'s in the $v_i^*$ sequence, assuming for the sake of contradiction that $v_i^* \neq 0$ nor $1$ for all $i$. 

\begin{prop}
\label{periodicity}
If $0 < v_i^* < 1$ for all $i \in \mathbb{Z}$, then all occurrences of $0.5$ in the sequence $\ldots, v_{-2}^*, v_{-1}^*, v_0^*, v_1^*, v_2^*, \ldots$ are periodic with odd period. That is, $\{i \in \mathbb{Z} \mid v_i^* = 0.5\} = t\mathbb{Z}$ for some $t \in 2\mathbb{Z} + 1$.
\end{prop}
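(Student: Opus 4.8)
The plan is to prove that $S := \{i \in \mathbb{Z} \mid v_i^* = 0.5\}$ is a subgroup of $\mathbb{Z}$, hence of the form $t\mathbb{Z}$ with $t$ its least positive element, and then to show $t$ is odd using an explicit $\mathbb{R}/\mathbb{Z}$ description of the sequence. The key enabling observation is that Corollary~\ref{3lims} may be read modulo $1$: the correction $\tfrac{k_i-k_{i+1}}{d}$ is an integer (a difference of two skip numbers), so $v_{i-1}^* + v_{i+1}^* + 2v_i^* \equiv 0 \pmod 1$ for every $i$, while the hypothesis $0 < v_i^* < 1$ keeps a sum like $v_{i-1}^* + v_{i+1}^*$ strictly inside $(0,2)$, so a congruence $v_{i-1}^*+v_{i+1}^* \equiv 0 \pmod 1$ upgrades to the exact equality $v_{i-1}^*+v_{i+1}^* = 1$.

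For the subgroup structure I would first establish a \emph{reflection principle}: if $v_t^* = 0.5$ then $v_{t+j}^* = 1 - v_{t-j}^*$ for all $j\in\mathbb{Z}$. The case $j = 1$ is Corollary~\ref{3lims} at $i=t$, which gives $v_{t-1}^* + v_{t+1}^* = 1 + \tfrac{k_t-k_{t+1}}{d}$; since the left side lies in $(0,2)$ the integer correction vanishes and the sum equals $1$. The inductive step substitutes the hypothesis $v_{t\pm j}^*$ into the mod-$1$ recursion at the indices $t+j$ and $t-j$ and compares, yielding $v_{t+j+1}^* + v_{t-j-1}^* \equiv 0 \pmod 1$, which (both terms in $(0,1)$) forces $v_{t+j+1}^* = 1 - v_{t-j-1}^*$. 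Applying this at $t=0$ (where $v_0^* = 0.5$) shows $(v_i^*)$ is symmetric about $0$; since also $v_{n/2}^* = 1 - v_{n/2}^*$ by the palindromic symmetry of Lemma~\ref{0.5}, we have $n/2 \in S$, so $S$ has a positive element. Composing the reflection about $0$ with the reflection about any $t \in S$ shows $(v_i^*)$ is $2t$-periodic; taking $t = t_0$ the least positive element of $S$, the $2t_0$-periodicity together with the symmetry $i\in S \iff -i\in S$ and the minimality of $t_0$ forces $S\cap[0,2t_0) = \{0,t_0\}$, hence $S = t_0\mathbb{Z}$.

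To determine the parity of $t_0$ I would extract explicit formulas modulo $1$. By Corollary~\ref{arithmetic} the subsequences $(v_{2m}^*)$ and $(v_{2m+1}^*)$ are arithmetic in $\mathbb{R}/\mathbb{Z}$; computing their common differences from $v_0^* = 0.5$ and Corollary~\ref{3lims} at $i = 0$ and $i = 1$ gives
\[
v_{2m}^* \equiv \tfrac{1}{2} - 2m\,v_1^* \pmod 1, \qquad v_{2m+1}^* \equiv (2m+1)\,v_1^* \pmod 1 .
\]
Write $v_1^* = a/q$ in lowest terms; this is rational by Corollary~\ref{arithmetic}, and $0<v_1^*<1$ by hypothesis, so $q\geq 2$. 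If $q$ were odd, then $v_q^* \equiv q\,v_1^* \equiv 0 \pmod 1$, and since $v_q^* \in [0,1]$ this forces $v_q^* \in \{0,1\}$, contradicting the hypothesis; hence $q$ is even and $a$ is odd. If $4\mid q$, then with $m = q/4$ we get $v_{q/2}^* \equiv \tfrac{1}{2} - \tfrac{a}{2} = \tfrac{1-a}{2} \equiv 0 \pmod 1$, again forcing a value in $\{0,1\}$, a contradiction. Therefore $q \equiv 2 \pmod 4$; write $q = 2q'$ with $q'$ odd. Reading $S$ off the displayed formulas: $2m\in S \iff q\mid 2m \iff q'\mid m$, so the even indices in $S$ are the even multiples of $q'$; and $2m+1\in S \iff (2m+1)a \equiv q' \pmod{2q'}$, whose solutions are $2m+1 \equiv q'a^{-1} \equiv q' \pmod{2q'}$ (using that $a^{-1}$ is odd modulo the even number $2q'$), i.e.\ the odd multiples of $q'$. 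Hence $S = q'\mathbb{Z}$ with $q'$ odd, which is the proposition (and identifies $t_0 = q'$).

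The crux is the parity step. The subgroup statement $S = t_0\mathbb{Z}$ by itself does not force an odd generator; what does the work is precisely the standing hypothesis $0 < v_i^* < 1$, which through the explicit $\mathbb{R}/\mathbb{Z}$ formulas rules out a denominator of $v_1^*$ that is odd or divisible by $4$. One should also note that a congruence $v_i^* \equiv 0 \pmod 1$ is never an empty conclusion: since every vertex-ratio limit lies in $[0,1]$, it genuinely yields $v_i^* \in \{0,1\}$, which is what produces the contradiction.
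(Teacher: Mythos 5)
Your proposal is correct. It shares with the paper's proof the decisive opening move --- reading Corollary~\ref{3lims} modulo $1$ (the correction $\tfrac{k_i-k_{i+1}}{d}$ is an integer, being a difference of skip numbers) and using $0 < v_i^* < 1$ to upgrade each congruence to an exact equality --- and your reflection principle is the forward direction of the paper's equivalence (\ref{eqn:0.5consecs}), which you prove by a direct two-step induction on the mod-$1$ recursion rather than by invoking Corollary~\ref{arithmetic}. After that the arguments genuinely diverge. The paper takes three consecutive occurrences of $0.5$, shows that unequal gaps would force an intermediate occurrence (hence periodicity), and then disposes of an even period in one line: the integer midpoint of two consecutive occurrences would itself be an occurrence. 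You instead note that composing the reflections about $0$ and about any $t$ with $v_t^*=0.5$ gives $2t$-periodicity, package the occurrence set as $t_0\mathbb{Z}$, and settle the parity by explicit rational arithmetic, deriving $v_{2m}^* \equiv \tfrac12 - 2mv_1^*$ and $v_{2m+1}^* \equiv (2m+1)v_1^* \pmod 1$, writing $v_1^* = a/q$ in lowest terms, and excluding $q$ odd and $4 \mid q$ because either would produce some $v_i^* \equiv 0 \pmod 1$, impossible for a value in $(0,1)$. Your parity step is longer than the paper's midpoint trick, but it buys more: it identifies the period explicitly as $t_0 = q/2$ and describes the whole sequence mod $1$ in terms of the single parameter $v_1^*$, whereas the paper's argument only certifies existence of an odd period. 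All the individual verifications (the range argument forcing the integer correction to vanish, the invertibility of the odd $a$ modulo $2q'$, and the identification of the odd-index solutions with the odd multiples of $q'$) check out.
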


\begin{proof}
By Corollary \ref{3lims}, we have 
 
\[v_{i - 1}^* + v_{i + 1}^* = 1 \Leftrightarrow v_i^* = 0.5\]

This is because $\frac{k_i - k_{i+1}}{d} \in \{-1, 0 ,1\}$ by Proposition \ref{skips} and because $0 < v_{i - 1}^*, v_i^*, v_{i + 1}^* < 1$. It follows that for all $k$

\begin{equation}\label{eqn:0.5consecs}
    v_{i - k}^* + v_{i + k}^* = 1 \Leftrightarrow v_i^* = 0.5
\end{equation}

This is because if $k$ is odd, $v_{i - 1}^* - v_{i - k}^* = v_{i + k}^* - v_{i + 1}^*$ in $\mathbb{R}/\mathbb{Z}$ by Corollary \ref{arithmetic}, so $v_{i - k}^* + v_{i + k}^* = 1 \Leftrightarrow v_{i - 1}^* + v_{i + 1}^* = 1$ since $0 < v_{i- k}^*, v_{i - 1}^*, v_{i + 1}^*, v_{i + k}^* < 1$. And if $k$ is even, $v_{i - k}^*, v_i^* , v_{i + k}^*$ form an arithmetic progression in $\mathbb{R}/\mathbb{Z}$ by Corollary \ref{arithmetic} so $v_{i - k}^* + v_{i + k}^* = 1 \Leftrightarrow 2v_i^* = 1 \Leftrightarrow v_i^* = 0.5$ since $0 < v_{i - k}^*, v_i^*, v_{i + k}^* < 1$.

Now let $\ldots ,v_{i_{-2}}^*, v_{i_{-1}}^*, v_{i_0}^*, 
v_{i_1}^*, v_{i_2}^*, \ldots$ be the subsequence of 0.5 occurrences in the $v_i^*$ sequence. The subsequence is infinite in both directions because $v_0^* = 0.5$ so $v_i^* = 0.5$ for all $i \in n\mathbb{Z}$. Let $v_{i_{k-1}}^*, v_{i_k}^*, v_{i_{k+1}}^*$ be three arbitrary, consecutive elements in the subsequence. 

Assume first that $i_k - i_{k-1} \neq i_{k + 1} - i_k$ so that without loss of generality $i_k - i_{k-1} < i_{k + 1} - i_k$. 
Then by (\ref{eqn:0.5consecs}),  $v_{i_{k - 1}}^*, v_{i_k}^*  = 0.5 \Rightarrow v_{2i_k - i_{k - 1}}^* = 0.5$ since $2i_k - i_{k - 1} = i_k + (i_k - i_{k - 1})$. Thus we have that $i_k - i_{k-1} < i_{k + 1} - i_k \Rightarrow i_k < 2i_k - i_{k - 1} < i_{k + 1}$ with $v_{2i_k - i_{k - 1}}^* = 0.5$, a contradiction since $v_{i_{k-1}}^*, v_{i_k}^*,$ and  $v_{i_{k+1}}^*$ were consecutive (in the case where $i_k - i_{k-1} > i_{k + 1} - i_k$, the proof is analogous with $v_{2i_k - i_{k + 1}}^* = 0.5$ and $i_{k - 1} < 2i_k - i_{k + 1} < i_k$).

Hence $i_k - i_{k-1} = i_{k + 1} - i_k$, implying that occurrences of 0.5 are periodic. Now if $t = i_k - i_{k-1} = i_{k + 1} - i_k$ were even, $m = \frac{i_k + i_{k+1}}{2} \in \mathbb{Z}$ and $v_{i_k}^*, v_{i_{k + 1}}^* = 0.5 \Rightarrow v_m^* = 0.5$ by (\ref{eqn:0.5consecs}), a contradiction since $i_k < m < i_{k+1}$ but $v_{i_{k-1}}^*, v_{i_k}^*,$ and $v_{i_{k+1}}^*$ are consecutive. Thus, occurrences of 0.5 are periodic with period $t$ odd.
\end{proof} 

\begin{rema}
Since $v_{cn}^* = v_0^*$ for all $c \in \mathbb{Z}$, we know $t \mid n$. Since $t$ is odd, we can deduce $t \mid \frac{n}{2^e}$, where $2^e$ is the highest power of $2$ dividing $n$.
\end{rema}

\subsection{Proof that the Minimizing Index Tends to Infinity}

We have now developed the machinery needed to complete the last step in the proof that $minind(X_p) \rightarrow \infty$:

\begin{prop}  $v_i^* = 0$ or $v_i^* = 1$ for some $i \in \mathbb{Z}$.
\label{Lim0}
\end{prop}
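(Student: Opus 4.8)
The plan is to argue by contradiction: suppose $0<v_i^*<1$ for every $i\in\mathbb{Z}$, and extract enough structure from the three preceding corollaries to contradict the fact that each $\gamma_p$ is a genuine closed geodesic on $X_p$ for an infinite sequence of primes $p$.

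First I would pin down the coarse shape of $(v_i^*)$. By Proposition~\ref{periodicity} and its remark, $\{i:v_i^*=1/2\}=t\mathbb{Z}$ for a fixed odd $t$ dividing $n/2^e$, so in particular $v_0^*=v_t^*=v_{2t}^*=v_{3t}^*=\cdots=1/2$. By Corollary~\ref{arithmetic} the even- and odd-indexed subsequences of $(v_i^*)$ are arithmetic progressions in $\mathbb{R}/\mathbb{Z}$, say with common differences $\delta_e,\delta_o$. Since $v_0^*=v_{2t}^*=1/2$ and the indices $0$ and $2t$ are $t$ steps apart in the even progression, while $v_t^*=v_{3t}^*=1/2$ with $t$ and $3t$ being $t$ steps apart in the odd progression, we get $t\delta_e\equiv t\delta_o\equiv 0\pmod 1$; hence $(v_i^*)$ has period $2t$. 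Within one such period each subsequence has exactly $t$ terms and, because $t$ is odd, contains the value $1/2$ exactly once; an arithmetic progression of $t$ terms whose common difference had order $r<t$ would repeat each of its values $t/r\ge 2$ times, so $\delta_e$ and $\delta_o$ both have exact order $t$. Each subsequence therefore sweeps out the full coset $\tfrac12+\tfrac1t\mathbb{Z}\pmod 1$, and since $v_i^*\in(0,1)$ this forces $v_i^*=\frac{2m_i-1}{2t}$ with $m_i\in\{1,\dots,t\}$, where $m_i=\tfrac{t+1}{2}$ precisely when $t\mid i$.

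Next I would feed this into the local relation. Writing $\mu_i=m_i-\tfrac{t+1}{2}\in\{-\tfrac{t-1}{2},\dots,\tfrac{t-1}{2}\}$ and $\epsilon_i=\frac{k_i-k_{i+1}}{d}\in\{-1,0,1\}$ (Proposition~\ref{skips}), Corollary~\ref{3lims} becomes the integer recursion $\mu_{i-1}+2\mu_i+\mu_{i+1}=t\,\epsilon_i$. Setting $\nu_i=\mu_i+\mu_{i+1}$ this reads $\nu_{i-1}+\nu_i=t\,\epsilon_i$, so $\nu_i\equiv(-1)^i\mu_1\pmod t$ (using $\nu_0=\mu_1$); the bounds $|\nu_i|\le t-1$ and $|\mu_1|\le\tfrac{t-1}{2}$ then force $\nu_i=(-1)^i\mu_1+t c_i$ with $c_i\in\{-1,0,1\}$, $c_0=0$, and $\epsilon_i=c_{i-1}+c_i$ --- so no two consecutive $c_i$ are equal and nonzero. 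Combined with $2t$-periodicity, $\mu_0=\mu_t=0$, and $\mu_i\neq 0$ for $t\nmid i$, these conditions severely restrict the sign patterns of $(c_i)$, equivalently of the skip-number increments $(\epsilon_i)$, leaving only a short explicit list (for $t=3$ the only pattern, up to rotation and reflection, is $(\epsilon_i)=(0,1,1,0,-1,-1)$). For each admissible pattern, back-solving $k_{i+1}=k_i-d\,\epsilon_i$ under the constraint $\sum_j k_j=0$ determines $k_1$, hence all the $k_j$ --- which are then all congruent mod $d$ --- and hence the skip numbers $\frac{p+k_j}{d}$. The final step is to check that these cannot all be positive integers strictly less than $p$ for infinitely many primes $p$: each forced pattern imposes a divisibility on $p$ that fails once $p$ is a large prime (for the $t=3$ pattern one gets $k_j\in\{-d,0,d\}$ and skip numbers $\tfrac pd+\{-1,0,1\}$, integers below $p$ only if $d\mid p$ with $\tfrac pd<p$, impossible for $p$ a large prime). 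This contradiction proves the proposition, and the degenerate case $t=1$ (all $v_i^*=1/2$) is included, since it forces $\epsilon_i\equiv 0$, $k_j\equiv 0$, and skip numbers $p/d$, which again fail to be integers below $p$ for large prime $p$.

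The main obstacle is this last stretch: enumerating the increment patterns compatible with the recursion and the size constraints $|\mu_i|\le\tfrac{t-1}{2}$, and then verifying for each one the arithmetic obstruction to being realized by honest skip numbers over an infinite set of primes. It is exactly here --- and nowhere else --- that the primality of $p$ enters, which is why the theorem is stated for prime-gons rather than for all $X_n$.
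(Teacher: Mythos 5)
Your setup is sound and runs parallel to the paper's: the reduction to $v_i^*\in\{\tfrac{1}{2t},\tfrac{3}{2t},\dots,\tfrac{2t-1}{2t}\}$ with $2t$-periodicity, and the rewriting of Corollary~\ref{3lims} as the integer recursion $\mu_{i-1}+2\mu_i+\mu_{i+1}=t\,\epsilon_i$ with $\epsilon_i=c_{i-1}+c_i$, are correct and a clean way to package the structure. The gap is in the finish. You assert that the constraints leave ``only a short explicit list'' of increment patterns $(\epsilon_i)$ and that each forced pattern makes the skip numbers non-integral for large primes; neither half is established. The admissible patterns are not a fixed finite list: they depend on $t$ (already for $t=5$ the recursion admits distinct complete patterns, one seeded by $\mu_1=1$ and one by $\mu_1=2$), so a case check cannot be uniform in $t$ and $n$ --- you would need a structural argument valid for every odd $t$. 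More seriously, the arithmetic obstruction you invoke at the end does not follow from what you have. From $k_{i+1}=k_i-d\epsilon_i$ the $k_j$ are mutually congruent mod $d$, and $\sum_j k_j=0$ determines $k_1=dE/n$ with $E=\sum_j\sum_{i<j}\epsilon_i\in\mathbb{Z}$; integrality of the skip numbers then reads $n\mid lp+E$, which is a congruence on $p$ that infinitely many primes can satisfy unless $n\mid E$ --- equivalently, unless some $k_j$ actually vanishes. ``All congruent mod $d$'' is not enough; you need a distinguished index at which $k_j=0$, and your $t=3$ example happens to have one without your argument explaining why one must always exist.

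That vanishing is precisely what the paper supplies and what your sketch omits. Using $v_t^*=0.5$, relation~(\ref{eqn:0.5consecs}), and Corollary~\ref{3lims}, the paper first proves $1-v_1^*=v_{t+1}^*$ and hence $t$-periodicity of $v_0^*,1-v_1^*,v_2^*,1-v_3^*,\dots$, which yields the symmetries $s_{i+1}-s_i=s_{t+1-i}-s_{t-i}$ and $s_{t+i+1}-s_{t+i}=-(s_{i+1}-s_i)$ together with $s_{t+1}-s_t=0$. A weighted telescoping of $\sum_{j=1}^{2t}s_j$ against these symmetries gives $\sum_{j=1}^{2t}s_j=2t\,s_{(t+1)/2}$, i.e.\ $k_{(t+1)/2}=0$, and only then does integrality force $n\mid lp$ and hence $p\mid n$, the contradiction. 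In your language, the missing lemma is that every admissible pattern $(\epsilon_i)$ is symmetric about $i=(t+1)/2$ and antisymmetric under $i\mapsto i+t$, which pins $E\equiv 0\pmod n$. If you derive those two symmetries from your recursion, your argument closes; as written, it stops exactly at the step where primality has to bite, which you yourself flag as the main obstacle.
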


\begin{proof} 
Assume for the sake of contradiction that $0 < v_i^* < 1$ for all $i \in \mathbb{Z}$. Then, by Proposition \ref{periodicity}, occurrences of 0.5 in the sequence $\ldots, v_{-2}^*, v_{-1}^*, v_0^*, v_1^*, v_2^*, \ldots$ are periodic with odd period $t$. 

The idea of this proof is to show through casework that the sequence $v_0^*, 1 - v_1^*, v_2^*, 1- v_3^*, \ldots, 1 - v_t^*, \ldots, v_{2t}^*$ is $t$-periodic with $t \mid 2n$. We may then use this to deduce that the sum of the skip numbers $lp$ is $n$ multiplied by some fixed integer with $l < n$, and this cannot be true for a sequence of primes $p$.

$v_0^*, v_2^*, v_4^*, \ldots v_{2t}^*$ form an arithmetic sequence in $\mathbb{R}/\mathbb{Z}$ by Corollary \ref{arithmetic}, with $v_0^* = v_{2t}^* = 0.5$.
Thus, the common difference must be $\frac{k}{t}$ where $k \in \mathbb{Z}$. Without loss of generality we can assume $0 \leq k < t$. Then, $v_{2i}^* = 0.5 + \frac{ik}{t} = \frac{t + 2ik}{2t}$ in $\mathbb{R}/\mathbb{Z}$, and since $t$ is odd $t + 2ik$ is odd so $v_{2i} = \frac{e_{2i}}{2t}$ for $e_{2i} \in \{1, 3, 5, \ldots, t, \ldots, 2t - 1\}$. The same reasoning applies to $v_1^*, v_3^*, v_5^*, \ldots, v_t^*, \ldots, v_{2t - 1}^*$ with $v_t^* = 0.5$ implying that for all $i$, $v_i^* = \frac{e_i}{2t}$ for $e_i \in \{1, 3, 5, \ldots, t, \ldots, 2t - 1\}$.

Now, assume without loss of generality that the common difference $\frac{k}{t} = v_2^* - v_0^*$ is with $k < \frac{t}{2}$ (if $k > \frac{t}{2}$, we may traverse the geodesics in the reverse direction, reversing the sequence with $v_2^* - v_0^* = \frac{t - k}{t}$ with $t - k < \frac{t}{2}$). Then, $v_{t - 1}^* = 0.5 + \frac{t - 1}{2}*\frac{k}{t} = \frac{t + tk - k}{2t}$ in $\mathbb{R}/\mathbb{Z}$.

In the case that $k$ is even, $v_{t - 1}^* =  \frac{t + tk - k}{2t} = \frac{t - k}{2t}$ in $\mathbb{R}/\mathbb{Z}$. Since $v_t^* = 0.5$, (\ref{eqn:0.5consecs}) implies $v_{t + 1}^* = \frac{t + k}{2t}$ in $\mathbb{R}/\mathbb{Z}$. Also, $v_0^* = 0.5, v_2^* = \frac{t + 2k}{2t} \Rightarrow 1 - v_1^* = \frac{t + k}{2t}$ by Corollary \ref{3lims}, for $\frac{k_i - k_{i + 1}}{d} \in \{-1, 0, 1\}$ and $\frac{k_i - k_{i + 1}}{d} = 0$ is the only choice that ensures $v_1^* = \frac{e_1}{2t}$ with $e_1 \in \{1, 3, 5, \ldots, t, \ldots, 2t - 1\}$ as required. Thus $1 - v_1^* = v_{t + 1}^*$, and together with Corollary \ref{arithmetic} and $t$-periodicity of 0.5 occurrences, this implies $t$-periodicity of the sequence $\ldots, 1- v_{-3}^*, v_{-2}^*, 1 - v_{-1}^*, v_0^*, 1- v_1^*, v_2^*, 1 - v_3^*, \ldots$. 

In the case that $k$ is odd, $v_{t - 1}^* =  \frac{t + tk - k}{2t} = \frac{2t - k}{2t}$ in $\mathbb{R}/\mathbb{Z}$. Since $v_t^* = 0.5$, (\ref{eqn:0.5consecs}) implies $v_{t + 1}^* = \frac{k}{2t}$ in $\mathbb{R}/\mathbb{Z}$. Also, $v_0^* = 0.5, v_2^* = \frac{t + 2k}{2t} \Rightarrow 1- v_1^* = \frac{k}{2t}$ by Corollary \ref{3lims}, for $\frac{k_i - k_{i + 1}}{d} \in \{-1, 0, 1\}$, and $\frac{k_i - k_{i + 1}}{d} = -1$ is the only choice that ensures $v_1^* = \frac{e_1}{2t}$ with $e_1 \in \{1, 3, 5, \ldots, t, \ldots, 2t - 1\}$ and $0 < v_1^* < 1$ as required (since $k < \frac{t}{2}$). Thus $1 - v_1^* = v_{t + 1}^*$, and together with Corollary \ref{arithmetic} and $t$-periodicity of 0.5 occurrences, this again implies $t$-periodicity of the sequence $\ldots, 1- v_{-3}^*, v_{-2}^*, 1 - v_{-1}^*, v_0^*, 1- v_1^*, v_2^*, 1 - v_3^*, \ldots$ (Note that $1 - v_{jt}^* = v_{jt}^*$ for all $j \in \mathbb{Z}$ in this sequence since $v_{jt}^* = 0.5$). This also implies $v_i^*= v_{i + 2t}^*$ for all $i \in \mathbb{Z}$ so that $\ldots, v_{-2}^*, v_{-1}^*, v_0^*, v_1^*, v_2^*, \ldots$ is $2t$-periodic (Note that Corollary \ref{arithmetic} applied to the sequence $v_0^*, v_2^*, v_4^*, \ldots v_{2t}^*$ also implies this because of $t$-periodicity of 0.5 occurences).

In either case, the sequence $v_0^*, 1 - v_1^*, v_2^*, 1- v_3^*, \ldots, 1 - v_t^*, \ldots, v_{2t}^*$ is $t$-periodic with $2t \mid n$ since $t$ is odd, $n$ is even, and $t \mid n$. 
Since $v_t^* = 0.5$, by Corollary \ref{arithmetic} and (\ref{eqn:0.5consecs}), $v_{t - i}^* + v_{t + i}^* = 1$ for all $i$. Then $t$-periodicity of the sequence above implies $1- v_{t + i}^* = v_i^* \Rightarrow v_i^* = v_{t - i}^*$ and $v_{t + i}^* = v_{2t - i}^*$ for all $i \in \mathbb{Z}$. Thus, given the sequence of skip numbers $s_1 = \frac{p + k_1}{d}, s_2 = \frac{p + k_2}{d}, s_3 = \frac{p + k_3}{d}, \ldots, s_n = \frac{p + k_n}{d}$, Corollary \ref{3lims} implies $s_{i + 1} - s_i = s_{t + 1 - i} - s_{t - i}$ and $s_{t + i + 1} - s_{t + i} = s_{2t - i + 1} - s_{2t - i}$ for all $i \in \mathbb{Z}$ with $1 \leq i < t$. Thus, we have

\[\sum\limits_{j = 1}^{2t} s_j = \sum\limits_{j = 1}^{2t} \left(s_1 + \sum\limits_{k = 1}^{j - 1} \left(s_{k + 1} - s_k\right)\right) =  2ts_1 + \sum\limits_{j = 1}^{2t - 1} (2t - j) \left(s_{j + 1} - s_j \right) = \]

\[= 2ts_1 + \sum\limits_{j = 1}^{\frac{t - 1}{2}} \Big(\big[(2t - j) + \left(2t - (t - j)\right)\big] \left(s_{j + 1} - s_j \right) \Big) + t(s_{t + 1} - s_t) + \]
\[\sum\limits_{j = t + 1}^{\frac{3t - 1}{2}} \Big(\big[(2t - j) + \left(2t - (2t - (j - t))\right)\big] \left(s_{j + 1} - s_j \right) \Big) =\]

\[= 2ts_1 + \sum\limits_{j = 1}^{\frac{t - 1}{2}} \Big(3t\left(s_{j + 1} - s_j \right) \Big) + t(s_{t + 1} - s_t) + \sum\limits_{j = t + 1}^{\frac{3t - 1}{2}} \Big(t \left(s_{j + 1} - s_j \right) \Big)  \]

\[= 2ts_1 + \sum\limits_{j = 1}^{\frac{t - 1}{2}} \Big(3t\left(s_{j + 1} - s_j \right) \Big) + \sum\limits_{j = t + 1}^{\frac{3t - 1}{2}} \Big(t \left(s_{j + 1} - s_j \right) \Big)\]



 
 
 

where the last step follows from $s_{t + 1} - s_t = 0$ by Corollary \ref{3lims} since $v_t^* = 0.5$ and $v_{t - 1}^* + v_{t + 1}^* = 1$. Now, because the sequence $\ldots, 1- v_{-3}^*, v_{-2}^*, 1 - v_{-1}^*, v_0^*, 1- v_1^*, v_2^*, 1 - v_3^*, \ldots$ is $t$-periodic, we have $v_i^* = 1 - v_{i + t}^*$ for all $i \in \mathbb{Z}$. Applying Corollary \ref{3lims}, we have

\[v_{i - 1}^* + v_{i + 1}^* - 2(1 - v_i^*) = \frac{k_i - k_{i + 1}}{d} \Rightarrow\]

\[1 - v_{i + t - 1}^* + 1 - v_{i + t + 1}^* - 2v_{i + t}^* = \frac{k_i - k_{i + 1}}{d} \Rightarrow\]

\[- \left(v_{i + t - 1}^* + v_{i + t + 1}^* - 2(1 - v_{i + t}^*) \right) = \frac{k_i - k_{i + 1}}{d} \Rightarrow\]

\[- \frac{k_{i + t} - k_{i + t + 1}}{d} = \frac{k_i - k_{i + 1}}{d} \Rightarrow\]

\[s_{i + 1} - s_i = -(s_{t + i + 1} - s_{t + i})\]

Thus, $s_{2t + 1} = s_1 + \sum\limits_{j = 1}^{2t} \left(s_{j + 1} - s_j\right) = s_1 + \sum\limits_{j = 1}^{t} \left(s_{j + 1} - s_j\right) + \sum\limits_{j = t + 1}^{2t} \left(s_{j + 1} - s_j\right) = s_1 + \sum\limits_{j = 1}^{t} \left(s_{j + 1} - s_j\right) - \sum\limits_{j = 1}^{t} \left(s_{j + 1} - s_j\right) = s_1$. Since the sequence $v_0^*, v_1^*, v_2^*, \ldots$ is $2t$-periodic and $s_1 = s_{2t + 1}$, the sequence of skip numbers is $2t$-periodic since the $v_0^*, v_1^*, v_2^*, \ldots$ sequence uniquely determines differences between consecutive skip numbers by Corollary \ref{3lims}. Thus, 
\begin{equation}\label{eqn:skipsums}
\sum\limits_{j = 1}^n s_j = \frac{n}{2t} \sum\limits_{j = 1}^{2t} s_j
\end{equation} 
(it was determined previously that $2t \mid n$). Also $s_{i + 1} - s_i = -(s_{t + i + 1} - s_{t + i})$ now implies 

\[\sum\limits_{j = 1}^{2t} s_j = 2ts_1 + \sum\limits_{j = 1}^{\frac{t - 1}{2}} \Big(3t\left(s_{j + 1} - s_j \right) \Big)  + \sum\limits_{j = t + 1}^{\frac{3t - 1}{2}} \Big(t \left(s_{j + 1} - s_j \right) \Big) =\]

\[= 2ts_1 + \sum\limits_{j = 1}^{\frac{t - 1}{2}} \Big(3t\left(s_{j + 1} - s_j \right) \Big) - \sum\limits_{j = 1}^{\frac{t - 1}{2}} \Big(t \left(s_{j + 1} - s_j \right) \Big) = \]

\[ = 2ts_1 + \sum\limits_{j = 1}^{\frac{t - 1}{2}} \Big(2t\left(s_{j + 1} - s_j \right) \Big) =\]
 
\[= 2ts_{(t + 1)/2}\]

Recalling that the sum of the skip numbers of $\gamma_p$ is $lp$ for $l < n$, by (\ref{eqn:skipsums}), $lp = \sum\limits_{j = 1}^{n} s_j = ns_{(t + 1)/2} \Rightarrow \frac{lp}{n} = s_{(t + 1)/2}$ for all primes $p$ in the sequence $\gamma_p$. 
Since $s_{(t + 1)/2} \in \mathbb{Z}^+$, we must have $n \mid lp$.  However, $l < n$ and all the $p$ are prime, which implies that $n \mid p$ for all $p$ in the sequence $\gamma_p$.  
Since our sequence $\gamma_p$ is infinite and $n$ is a fixed finite integer, this is a contradiction.  Thus $v_i^* = 0$ or $v_i^* = 1$ for some $i \in \mathbb{Z}$.
\end{proof}

\begin{rema}
Note that the only time primality was used in this whole chain of reasoning was at the very end. If instead we have a sequence $n_i \in \mathbb{N}$, the reasoning can be modified to obtain a contradiction if no subsequence of $n_i$ has a common prime factor. Thus, for a sequence $n_i \in \mathbb{N}$ the statement holds if no subsequence $n_{i_j}$ has $gcd(n_{i_j}) > 1$. 

\end{rema}

Theorem \ref{infinity} now follows immediately from Proposition \ref{Lim0}, Lemma \ref{converge} and Lemma \ref{convergent}. The generalization of Theorem \ref{infinity} below also follows from the above remark.




\begin{theo}
\label{generalization}
For all $n_i \in \mathbb{N}$, $minind(X_{n_i}) \rightarrow \infty$ if and only if $gcd(n_{i_j}) = 1$ for all subsequences $n_{i_j}$. 
\end{theo}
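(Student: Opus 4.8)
The plan is to prove the two implications separately: the forward direction is a quick consequence of the upper bound in Corollary~\ref{divisor}, while the reverse direction reruns the machinery of Section~\ref{limit}, replacing the last (and only) appeal to primality as indicated in the Remark after Proposition~\ref{Lim0}. It is convenient to first record the elementary equivalence that makes the statement natural: ``$\gcd(n_{i_j}) = 1$ for every subsequence $n_{i_j}$ of $(n_i)$'' is precisely the negation of ``some subsequence of $(n_i)$ has a common prime factor''. Indeed, a subsequence with $\gcd = g \geq 2$ has every term divisible by any prime $p \mid g$, and conversely a subsequence all of whose terms share a prime factor $p$ has $\gcd$ at least $p > 1$.

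For the forward direction I argue by contrapositive. Suppose some subsequence $n_{i_j}$ has $\gcd(n_{i_j}) = g \geq 2$, and fix a prime $p \mid g$. Then $p$ divides every $n_{i_j}$, so the least prime divisor $d_j$ of $n_{i_j}$ satisfies $d_j \leq p$, and Corollary~\ref{divisor} gives $minind(X_{n_{i_j}}) \leq 2 d_j \leq 2p$. Hence $minind(X_{n_i})$ is bounded along this subsequence and does not tend to infinity.

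For the reverse direction, assume every subsequence of $(n_i)$ has $\gcd$ equal to $1$. By Lemma~\ref{convergent}, which is already stated for arbitrary $n_i \in \mathbb{N}$, it suffices to show that every sequence of convergent geodesics $\gamma_i$ on a subsequence $X_{n_{i_j}}$ has $minind(\gamma_i) \to \infty$. I would carry out the entire chain of Section~\ref{limit} with the polygon size $n_{i_j}$ playing the role of the prime $p$: pass to a further subsequence on which the period $n$, the integer $l$ with $1 \leq l < n$, and the skip-number pattern $\tfrac{n_{i_j} + k_1}{d}, \dots, \tfrac{n_{i_j} + k_n}{d}$ are all fixed; normalize via Lemma~\ref{0.5} so that $v_0 = 0.5$ and the skip sequence is palindromic; and note that Proposition~\ref{lims} and Corollaries~\ref{3lims} and~\ref{arithmetic}, whose proofs are purely trigonometric and combinatorial, hold verbatim with $n_{i_j}$ in place of $p$. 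Running the argument of Proposition~\ref{Lim0} then produces, in place of the identity $\tfrac{lp}{n} = s_{(t+1)/2} \in \mathbb{Z}^+$, the identity $\tfrac{l\, n_{i_j}}{n} = s_{(t+1)/2} \in \mathbb{Z}^+$, that is $n \mid l\, n_{i_j}$ for every term of the subsequence. Writing $n' = n/\gcd(n,l)$, this forces $n' \mid n_{i_j}$ for all $j$, so $n'$ divides the greatest common divisor of this still-infinite subsequence; since $1 \leq l < n$ we have $n' \geq 2$, contradicting the hypothesis that every subsequence has $\gcd$ equal to $1$. Therefore $v_i^* = 0$ or $v_i^* = 1$ for some $i$, and Lemma~\ref{converge} yields $minind(\gamma_i) \to \infty$; Lemma~\ref{convergent} then completes the reverse direction.

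The main obstacle is essentially bookkeeping: one must verify that the long sequence of reductions and computations in Section~\ref{limit} --- the partition into subsequences with fixed skip data, the palindromic normalization of Lemma~\ref{0.5}, and Proposition~\ref{lims} together with Corollaries~\ref{3lims} and~\ref{arithmetic} --- really do go through word for word when the varying polygon size is an arbitrary integer rather than a prime. As the Remark after Proposition~\ref{Lim0} points out, primality was used exactly once, in passing from $n \mid lp$ to $n \mid p$, and that step is precisely what the divisibility argument ``$n' \mid n_{i_j}$ for all $j$, hence $n' \mid \gcd$'' replaces; nothing before it sees the factorization of the polygon size, so no new ideas are required.
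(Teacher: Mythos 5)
Your proposal is correct and follows essentially the same route as the paper: the forward direction is the contrapositive via the upper bound $minind(X_n)\leq 2d$ (Corollary~\ref{divisor}), and the reverse direction reruns the Section~\ref{limit} machinery, replacing the single appeal to primality exactly as the Remark after Proposition~\ref{Lim0} prescribes. Your explicit divisibility step (from $n\mid l\,n_{i_j}$ to $n'\mid n_{i_j}$ with $n'=n/\gcd(n,l)\geq 2$, hence a subsequence with $\gcd>1$) is a welcome fleshing-out of what the paper leaves as ``apply the same proof as for the primes.''
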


\begin{proof} 
For the forward direction, if there is a subsequence $n_{i_j}$ where $gcd(n_{i_j}) \neq 1$, we would have a contradiction of either \cite[Proposition 2.1]{ade} or Corollary \ref{infinity} which implies $minind(X_{n_{i_j}}) \leq 2gcd(n_{i_j})$. For the reverse direction, we may apply the same proof as for the primes.
\end{proof}

\section{Convergent V-shaped Geodesics} \label{vshapesec}

In this section we discuss the V-shaped geodesics on the doubled odd-gons from Example~\ref{vshapeEx} (see Figure~\ref{vshape}) and prove Theorem~\ref{vshapethm}. 

We first compute the minimizing index of the V-shaped geodesics, providing a counterexample to the converse of Sormani's Corollary 7.2 in the process.  
We then focus on proving Theorem~\ref{vshapethm}. In subsection 5.2, we provide and justify preliminary assumptions used in the proof of this theorem and establish the setup we will use in the proof. Using these assumptions and this setup, we will then provide short intuitive proofs of Theorem~\ref{vshapethm} for $X_3$ and $X_5$ and then generalize these proofs to $X_{2n + 1}$.

\subsection{Minimizing Index}
Recall that the $V$-shaped geodesic on $X_{2n + 1}$ starts at the midpoint $M$ of some edge and consists of perpendiculars to opposing edges. 
These geodesics exist for all $n$, have period 4, and have skip number sequence $\{n, n+1, n+1, n\}$.
Note that this sequence meets the conditions of Proposition $\ref{skips}$.
For the remainder of this paper, we denote by $g_1, g_2, g_3, g_4$ the consecutive segments of the $V$-shaped geodesic as traversed in a counterclockwise orientation (see Figure~\ref{VSP1}). 

\begin{figure}[ht]
\includegraphics[scale = 0.25]{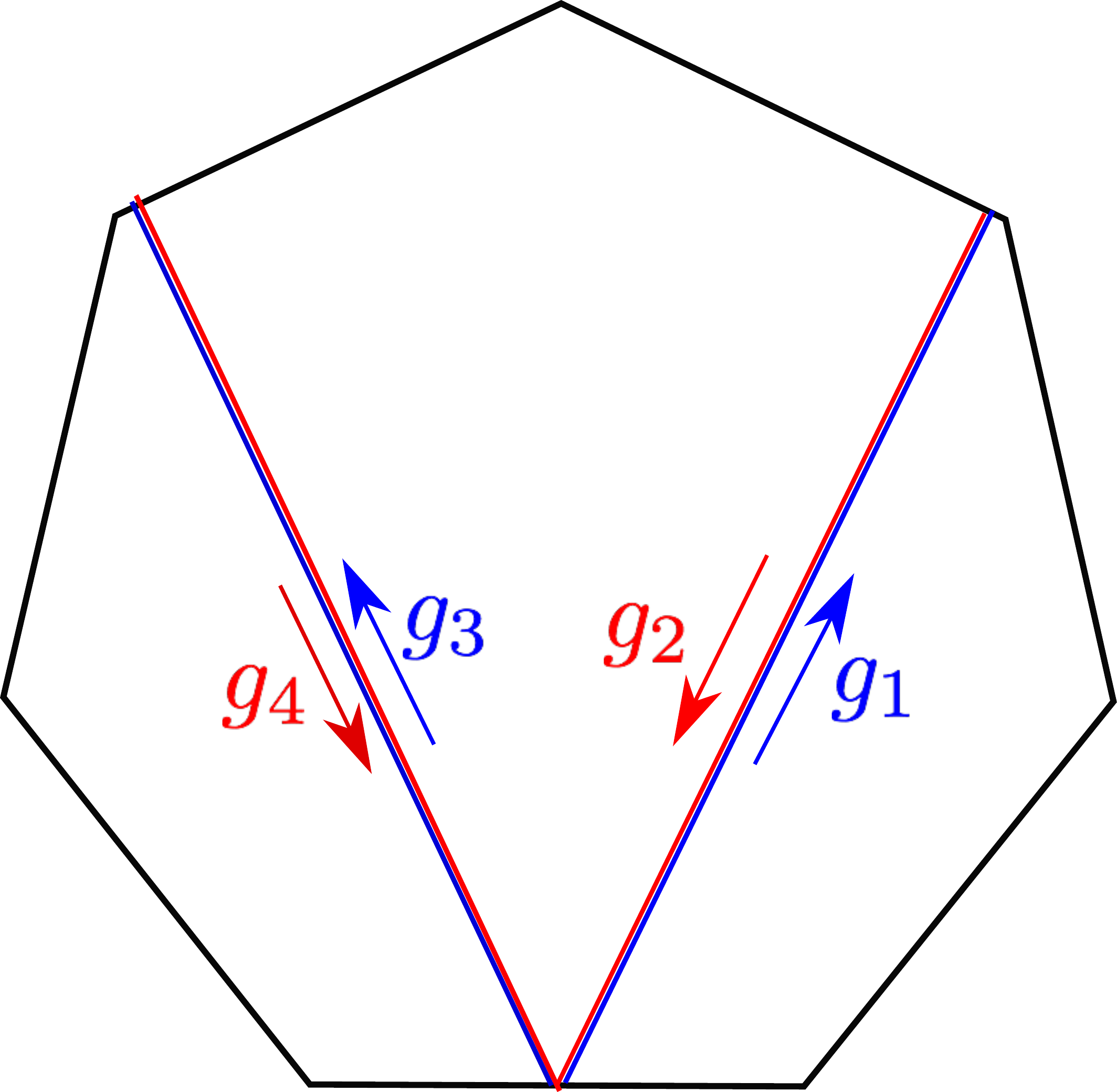}
\caption{Labeling Segments of V-Shaped Geodesic in a Counterclockwise traversal} 
\label{VSP1}
\end{figure}

\begin{figure}
\includegraphics[scale=0.3]{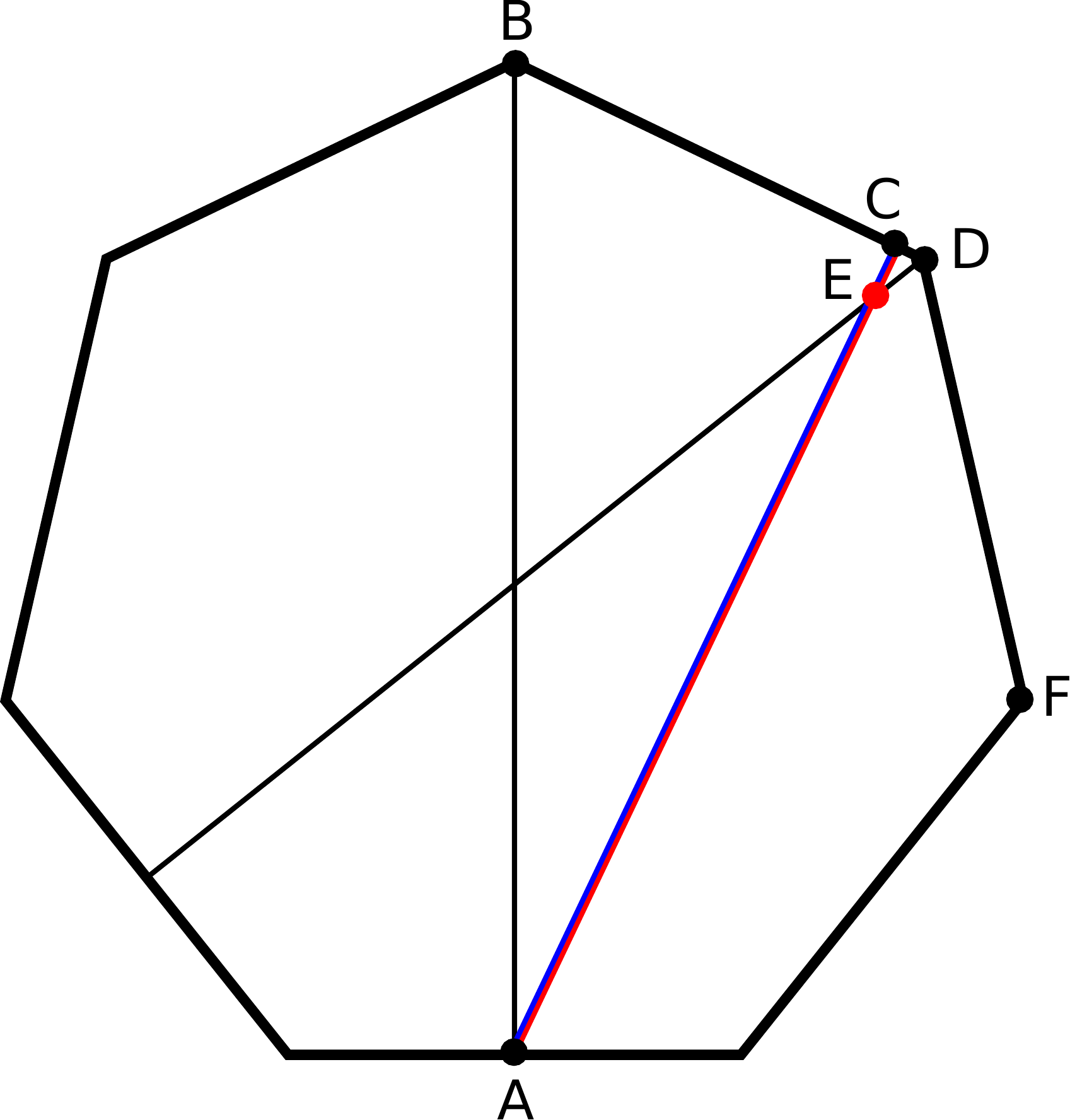}
\caption{Part of a V-shaped curve on a 7-gon}
\label{vshapepf}
\end{figure}

We now provide a bound for the minimizing index of this geodesic that is far better than its period.
In Figure $\ref{vshapepf}$, $A$ is the midpoint of a side of $X_{2n+1}$ and $AC$ is the path the V-shaped geodesic takes, hitting side $BD$ at a right angle.
Let $E$ be the point of intersection of the angle bisector of $D$ with $AC$.
We claim that the path along the geodesic from $E$ to $C$ and then to $E$ on the other face of the polygon is the maximal length-minimizing path for all intervals of $\gamma$ centered at $C$.
This is because $E$ is the intersection of the angle bisector at $D$ and a perpendicular from side $BD$, so it is equidistant to $BD$ and $DF$. 
If $E$ were moved any closer to $A$, it would be closer to $DF$ and a geodesic hitting side $DF$ would be a shorter way to connect $E$ with its corresponding point on the opposite face of $X_{2n+1}$. 

Let $x$ be the length of $CE$. The length of $\gamma$ is $4l(g_1)$ (our geodesic is period 4 and all the segments have the same length by symmetry) and the maximal interval centered at $C$ that is length-minimal has length $2x$.
This gives the bound $minind(\gamma) \geq 4l(g_1)/2x = 2l(g_1)/x$.
Note that $\angle CBA = \angle CDE = \frac{(2n-1)\pi}{2(2n+1)}$ since $AB$ and $DE$ are angle bisectors and the polygon is regular.
Thus $\triangle ABC \sim \triangle EDC$. Note that $\angle CAB = \angle DEC = \frac{\pi}{2n+1}$.

For convenience, we let $X_{2n+1}$ have side length 1. Let $CD = y$. Then $BC = 1-y$. Further let $AB = h$.
Now note that $\sin(\frac{\pi}{2n+1}) = \frac{1-y}{h}$, so $y = 1 - h\sin(\frac{\pi}{2n+1})$.
By similar triangles, $\frac{l(g_1)}{x} = \frac{1-y}{y}$, so $minind(\gamma) \geq \frac{2(1-y)}{y} = \frac{2h\sin(\frac{\pi}{2n+1})}{1-h\sin(\frac{\pi}{2n+1})}$.
Note that $h$ is just the height of a regular $2n+1$-gon, which is equal to the sum of the circumradius and the apothem.
This comes out to $h = \frac{1+\cos(\frac{\pi}{2n+1})}{2\sin(\frac{\pi}{2n+1})}$.

Plugging this into our bound for the minimizing index gives $$minind(\gamma) \geq \frac{1+\cos(\frac{\pi}{2n+1})}{1-\frac{1}{2}(1-\cos(\frac{\pi}{2n+1}))} = \frac{1}{2}\left(\frac{1+\cos(\frac{\pi}{2n+1})}{1-\cos(\frac{\pi}{2n+1})}\right).$$

Note that as $n\to\infty$, this lower bound on the minimizing index tends to $\infty$ as well. 
However, in the limit, the angle the V-shape makes, $\frac{2\pi}{2n+1}$, tends to zero.
Thus the limit of this convergent sequence of geodesics is a geodesic on the double disk that traverses a diameter geodesic twice.
This is a period 4 geodesic on the double disk, so by Theorem~\ref{double disk minind}, it has minimizing index 4.
This provides a counterexample to the converse of Sormani's Corollary 7.2, as we have demonstrated a sequence of geodesics with divergent minimzing index but non-trivial Gromov-Hausdorff limit.

Furthermore, it is known that the only period 4 geodesic on the double triangle $X_3$ is this V-shaped one.  Plugging in $n=1$ to the above formula gives $minind(\gamma) \geq 6$, which proves that $minind(X_3) \geq 6$ as well (using Proposition \ref{minindperpf}).\newline

\subsection{Preliminaries}
Now we transition to proving Theorem $\ref{vshapethm}$.  By the same reasoning used in the proof of Lemma~\ref{0.5}, a closed geodesic on a $X_{2n + 1}$ can be translated to pass through the midpoint of some edge of the polygon. Since such a translation preserves the skip number sequence, it preserves geodesic length because it preserves the shape of the development. 

Thus, for the purposes of proving Theorem~ \ref{vshapethm}, we may assume without loss of generality that all geodesics start at the midpoint of some edge. 

As in the proof of Lemma \ref{0.5}, this assumption is equivalent to a palindromic skip number sequence and to reflectional symmetry of a geodesic.

Let $\gamma$ be an arbitrary closed geodesic on $X_{2n + 1}$, starting at $M$ and consisting of consecutive segments $\gamma_1, \gamma_2, \gamma_3, \ldots, \gamma_{2k}$ in a counterclockwise orientation of $\gamma$.
Due to reflectional symmetry, the midpoint of the geodesic, traversed between $\gamma_k$ and $\gamma_{k + 1}$, lies on the line $l$ that contains $M$ and is perpendicular to the edge containing $M$. Thus, because $2n + 1$ is odd, $l$ intersects the boundary of the polygon $X_{2n}$ at $M$ and at the vertex opposing $M$. Because a geodesic cannot pass through a vertex, the midpoint of the geodesic must coincide with the starting edge midpoint $M$.

Thus, $g_1 \rightarrow g_2$ and $\gamma_1 \rightarrow \gamma_2 \ldots \rightarrow \gamma_k$ are paths traversed by the $V$-shaped geodesic and $\gamma$, respectively, from $M$ back to $M$.
We want to show that the length of $\gamma$ is at least the length of the $V$-shaped geodesic on $X_{2n+1}$.
Because $\displaystyle \sum_{i = 1}^k l(\gamma_i) = \frac{l(\gamma)}{2}$ and $l(g_1) + l(g_2)$ is half the length of the $V$-shaped geodesic, it suffices to show 

\begin{equation}\label{eqn:vshapepf}
\displaystyle \sum_{i = 1}^k l(\gamma_i) \geq l(g_1) + l(g_2),
\end{equation}

with equality only when $\gamma$ is the V-shaped geodesic, in order to prove Theorem~ \ref{vshapethm}. The following proof of Theorem~ \ref{vshapethm} will be devoted to proving (\ref{eqn:vshapepf}). Due to reflectional symmetry of $\gamma$, we may assume without loss of generality that the skip number $s_1$ of $\gamma_1$ is less than or equal to $n$ (i.e. less than halfway across the circumference of $X_{2n + 1})$. 

\subsection{V-shaped geodesics on $X_3$ and $X_5$}
We now provide an intuitive proof of (\ref{eqn:vshapepf}) for $X_3$ and $X_5$.

On $X_3$, $s_1 \leq 1 \Rightarrow s_1 = 1$. Since the skip number of $g_1$ is also $1$, $\gamma_1$ hits the same edge (opposing $M$) of $X_3$ as $g_1$. (\ref{eqn:vshapepf}) then follows from the fact that a perpendicular is the shortest path from a point to a line: $\gamma_1$ is a path to the edge from $M$ and $\gamma_2 \rightarrow \ldots \rightarrow \gamma_k$ is a path from that edge to $M$. So $l(g_1) \leq l(\gamma_1)$ and $l(g_2) \leq \sum_{i = 2}^k l(\gamma_i)$ because $g_1$ and $g_2$ are perpendiculars.  Note that equality holds only when $l(g_1) = l(\gamma_1)$ and $l(g_2) = \sum_{i = 2}^k l(\gamma_i)$, which can only occur when $\gamma$ is the V-shaped geodesic, as the V-shaped geodesic contains the unique shortest paths from $M$ to the given edge of $X_3$.

On $X_5$, the above proof can be modified if we can show that $\gamma_1 \rightarrow \ldots \rightarrow \gamma_k$ hits the same edge $e_2$ as $g_1$ does, $n = 2$ skip numbers away from $M$. Assuming this edge is hit between $\gamma_j$ and $\gamma_{j + 1}$, (\ref{eqn:vshapepf}) will then follow in the same way with $l(g_1) \leq \sum_{i = 1}^j l(\gamma_i)$ and $l(g_2) \leq \sum_{i = j + 1}^k l(\gamma_i)$ since $\gamma_1 \rightarrow \ldots \rightarrow \gamma_j$ is a path from $M$ to the edge and $\gamma_{j + 1} \rightarrow \ldots \rightarrow \gamma_n$ is a path from the edge to $M$ and $g_1, g_2$ are perpendiculars. Note that it also suffices to show that the edge $e_3$, $3$ skip numbers away from $M$, is hit along $\gamma_1 \rightarrow \ldots \rightarrow \gamma_k$, since a path from $M$ to $e_3$ and back to $M$ is also no shorter than $l(g_3) + l(g_4) = l(g_1) + l(g_2)$ by the symmetry of $X_5$.

To show that $e_2$ or $e_3$ is hit along $\gamma_1 \rightarrow \ldots \rightarrow \gamma_k$, we note that $s_1 \leq n = 2 \Rightarrow s_1 = 1$ or $2$. If $s_1 = 2$, $e_2$ is hit, and if $s_1 = 1$, the skip number $s_2$ of $\gamma_2$ is $1$ or $2$ by Proposition~\ref{skips}, implying that $\gamma_2$ hits $e_2$ or $e_3$.  Thus the desired result is attained. 

Note that for $X_3$ we have $L= \sqrt{3}diam$ and for $X_5$ we have $L \approx 3.1diam$. As $n$ grows the doubled odd-gons converge to the doubled disk, and we have $L \to 4diam $.

\subsection{V-shaped geodesics on $X_{2n+1}$}
Here we prove Theorem~\ref{vshapethm} in general, considering the cases where $n \geq 3$. Again we focus on proving (\ref{eqn:vshapepf}).

The intuition for the proof is as follows: approximating $X_{2n + 1}$ with $X_\infty$, the path $g_1 \rightarrow g_2$ traverses the doubled disk along diameters. The path $\gamma_1 \rightarrow \gamma_2 \rightarrow \ldots \rightarrow \gamma_k$  starts and ends at the same point, so it winds around the doubled disk at least once. Thus, the intercepted arc angles sum to no less than $2\pi$. Given a counterclockwise orientation, we have two cases: all arcs are minor arcs or some arc is major. If all arcs are minor arcs, we may use the fact that the sine function, which determines chord lengths, is concave and increasing over $[0, \frac{\pi}{2}]$ to show that a traversal along the diameters is shortest. If some arc corresponding to $\gamma_i$ is major, we may assume without loss of generality (changing orientation if need be) that the arc corresponding to $\gamma_1$ is minor, and Proposition~\ref{skips} implies that there is some $1 < j < i$ such that $\gamma_j$ has a skip number close to that of a diameter, making traversal along $\gamma_j$ and back to the starting point of $\gamma_j$ ($\gamma_{j + 1} \rightarrow \ldots \rightarrow \gamma_{j  - 1}$) no shorter than traversal along two diameters $g_1, g_2$ by the triangle inequality.

We now make this intuition more rigorous. The following paragraph describes a construction depicted in Figure~\ref{VSP2}.

\begin{figure}[ht]
\includegraphics[scale = 0.35]{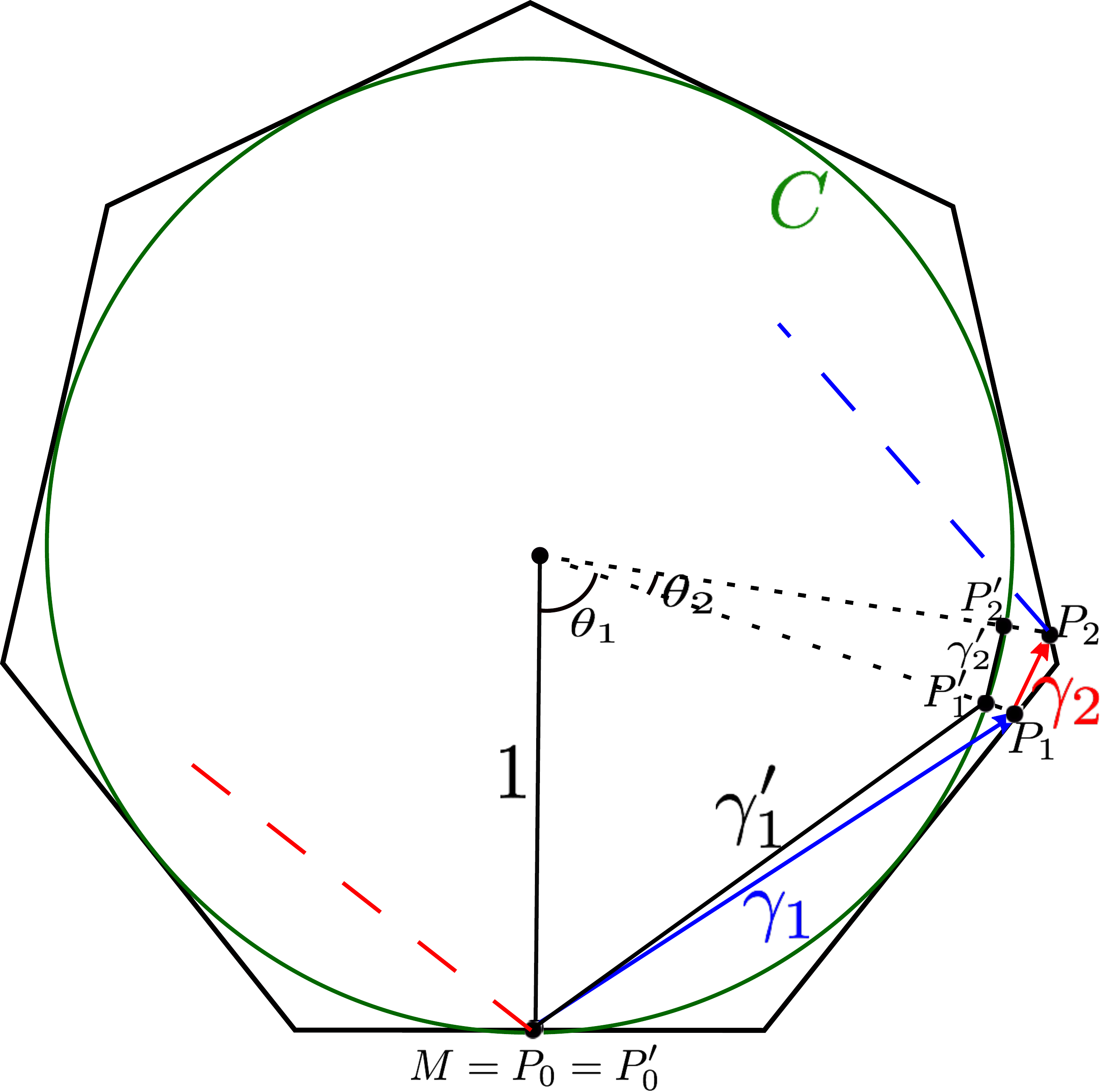}
\caption{Construction for the General Proof of Theorem~\ref{vshapethm}} 
\label{VSP2}
\end{figure}

Let $P_0, P_1, P_2, \ldots P_{k - 1}$ be the intersection points of the path $\gamma_1 \rightarrow \gamma_2 \rightarrow \ldots \rightarrow \gamma_k$ with the edges of $X_{2n + 1}$ in succession, where $P_0 = M$ and $\gamma_i$ extends from $P_{i - 1}$ to $P_i$, and let $C$ be the inscribed circle of $X_{2n + 1}$. Let $P_0^\prime, P_1^\prime, P_2^\prime, \ldots, P_{k - 1}^\prime$ lie on the circumference of $C$, so that $P_i^\prime$ is the intersection point of this circumference with the line between $P_i$ and the center of $C$.
Let $\gamma_i^\prime$ be the segment connecting $P_{i - 1}^\prime$ to $P_i^\prime$, so that $\gamma_1^\prime \rightarrow \ldots \rightarrow \gamma_k^\prime$ is a path from $P_0 = M$ to $M$ winding around the circumference of $C$, given a counterclockwise orientation. Let $\theta_1, \theta_2, \ldots \theta_{k}$ be the measures of corresponding intercepted arcs. Figure~\ref{VSP2} shows this construction for $\gamma_1$ and $\gamma_2$. 

Then, fixing the radius of $C$ at $1$, we have the following:

\[
\displaystyle \sum_{i = 1}^k l(\gamma_i) \geq \displaystyle \sum_{i = 1}^k l(\gamma_i^\prime) = 2\sum_{i = 1}^k \sin \left(\frac{\theta_i}{2}\right).
\]

Because $\gamma_1^\prime \rightarrow \ldots \rightarrow \gamma_k^\prime$ loops around the circumference of $C$ from $M$ to $M$, $\displaystyle \sum_{i = 1}^k \theta_i = 2m\pi$ for some $m \in \mathbb{Z}^+$.

If all intercepted arcs are minor, each $\theta_i < \pi$ and we may use the fact that $\sin$ is concave and increasing over $[0, \frac{\pi}{2}]$, along with $\displaystyle \sum_{i = 1}^k \theta_i = 2m\pi \Rightarrow \displaystyle \sum_{i = 1}^k \frac{\theta_i }{2} = m\pi = 2m \cdot \frac{\pi}{2}$, to yield

\[
2\sum_{i = 1}^k \sin \left(\frac{\theta_i}{2}\right) \geq 2(2m)\sin \left(\frac{\pi}{2} \right) \geq 4\sin \left(\frac{\pi}{2} \right) = 4.
\]

Note that this lower bound of $4$ is twice the diameter of $C$. We will now show that $l(g_1) + l(g_2)$ is no greater than twice this diameter, proving the result for the case where all intercepted arcs are minor. 

If the radius of the inscribed circle $C$ is $1$, we may calculate that (see Figure~\ref{VSP3}) \[l(g_1) = \sin \left(\theta\right) \left(1 + \csc\left(\theta\right)\right), \] 

where $\theta = \frac{\pi (2n - 1)}{2(2n + 1)}$.

\begin{figure}[ht]
\includegraphics[scale = 0.35]{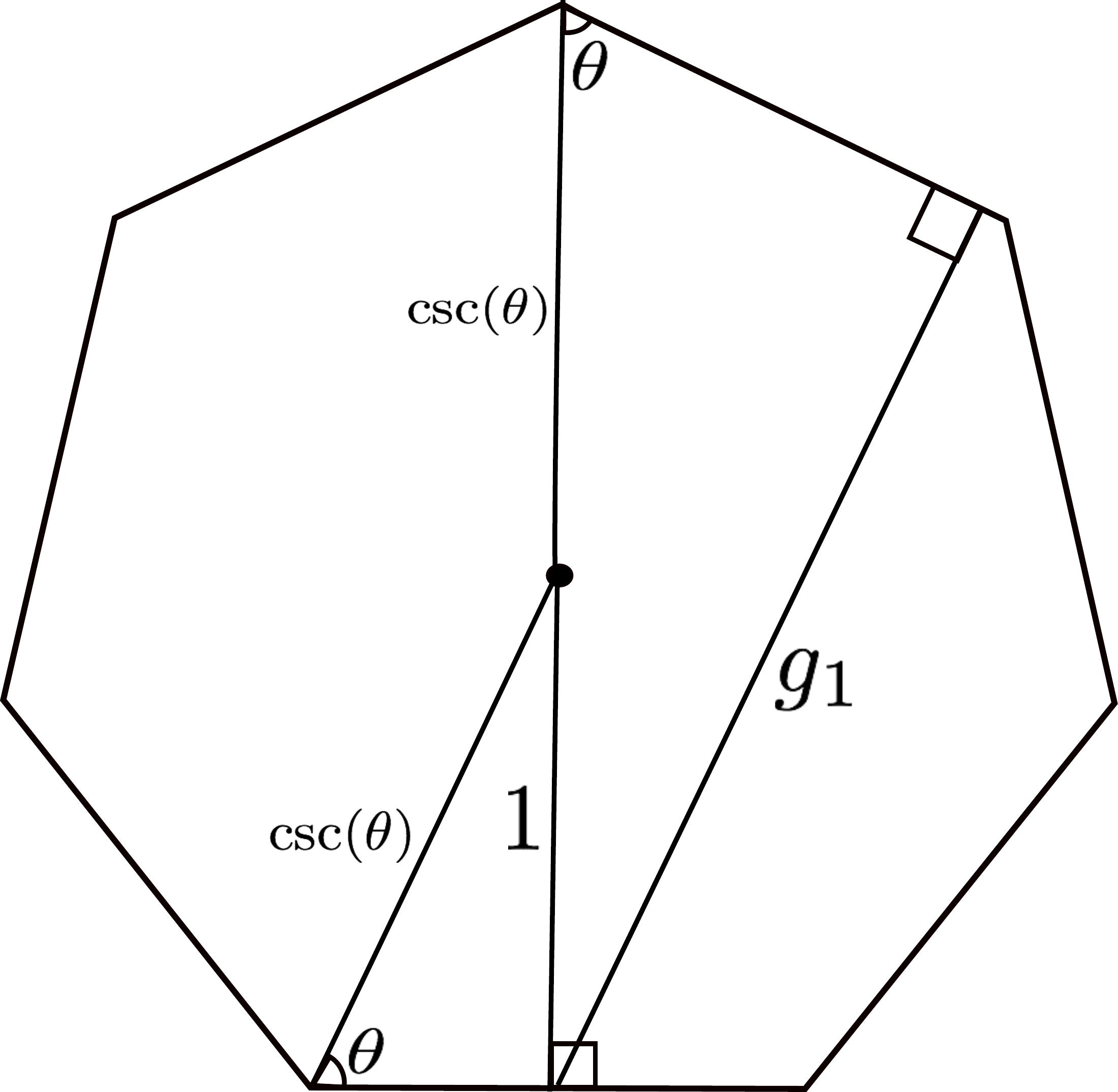}
\caption{Calculation of $l(g_1)$} 
\label{VSP3}
\end{figure}

Then, since $l(g_1) = l(g_2)$, we have
\[
 l(g_1) + l(g_2) =  2\sin\left(\theta\right) \left(1 + \csc \left(\theta\right)\right) = 
 \]
 
 \[=
 2\sin\left(\theta\right) + 2 < 2 + 2 = 4
 \]
 
as desired. 


Now we consider the case where some intercepted arc is major, so that some $\theta_i \geq \pi$. In this case, there is some skip number $s_i \geq \lfloor \frac{2n + 1}{2} \rfloor = n$. Because $s_1 \leq n$, Proposition~\ref{skips} implies that there is $1 \leq j \leq i$ with $s_j = n$. Let $j$ be the smallest such $j$ so that $s_j$ is the first occurrence of skip number $n$. If the corresponding $l(\gamma_j) \geq l(g_1)$, by the triangle inequality, $\displaystyle \sum_{i= 1,  i \neq j}^k l(\gamma_i) \geq l(\gamma_j) \geq l(g_1) = l(g_2)$, since $\gamma_{j + 1} \rightarrow \ldots \gamma_k \rightarrow \gamma_1 \rightarrow \ldots \gamma_{j - 1}$ is a path connecting the endpoints of $\gamma_j$. Then $\displaystyle \sum_{i = 1}^k l(\gamma_i) = l(\gamma_j) + \displaystyle \sum_{i= 1,  i \neq j}^k l(\gamma_i) \geq l(g_1) + l(g_2)$. 

Note that we only have equality when $l(\gamma_j) = l(g_1)$ and $\displaystyle \sum_{i= 1,  i \neq j}^k l(\gamma_i) = l(g_2)$.  Since $\gamma_j$ and the other first $k-1$ segments of $\gamma$ connect the same two points and $l(g_1) = l(g_2)$, we must have $k=2$ by the triangle inequality.  Furthremore, $\gamma_1$ and $\gamma_2$ must be the same segment.  Since $k=2$, one of these segments must hit $M$, and for the geodesic to follow the same segment twice, it must hit the other edge perpendicularly.  (Note that $\gamma_1$ cannot hit $M$ perpendicularly, or else it would pass through a vertex.)  But the only geodesic satisfying all of these restrictions is the V-shaped geodesic.  Thus are inequality is strict unless $\gamma$ is the V-shaped geodesic. 

Now, in the case where $l(\gamma_j) < l(g_1)$, we will show that there is a shorter path from $M$ to $M$ along the circumference of $C$ with only minor arcs intercepted, reducing the problem to the proven case above. First, if $l(\gamma_j) < l(g_1)$, $\theta_j < \pi$, and we have a minor arc intercepted. This is because the arc resulting from $g_1$ is a minor, and if we denote this arc measure as $\alpha$, $l(\gamma_j) < l(g_1) \Rightarrow \theta_j < \alpha < \pi$, using the fact that the skip number of $\gamma_j$ is the same as that of $g_1$ (see Figure~\ref{VSP4}).

\begin{figure}[ht]
\includegraphics[scale = 0.35]{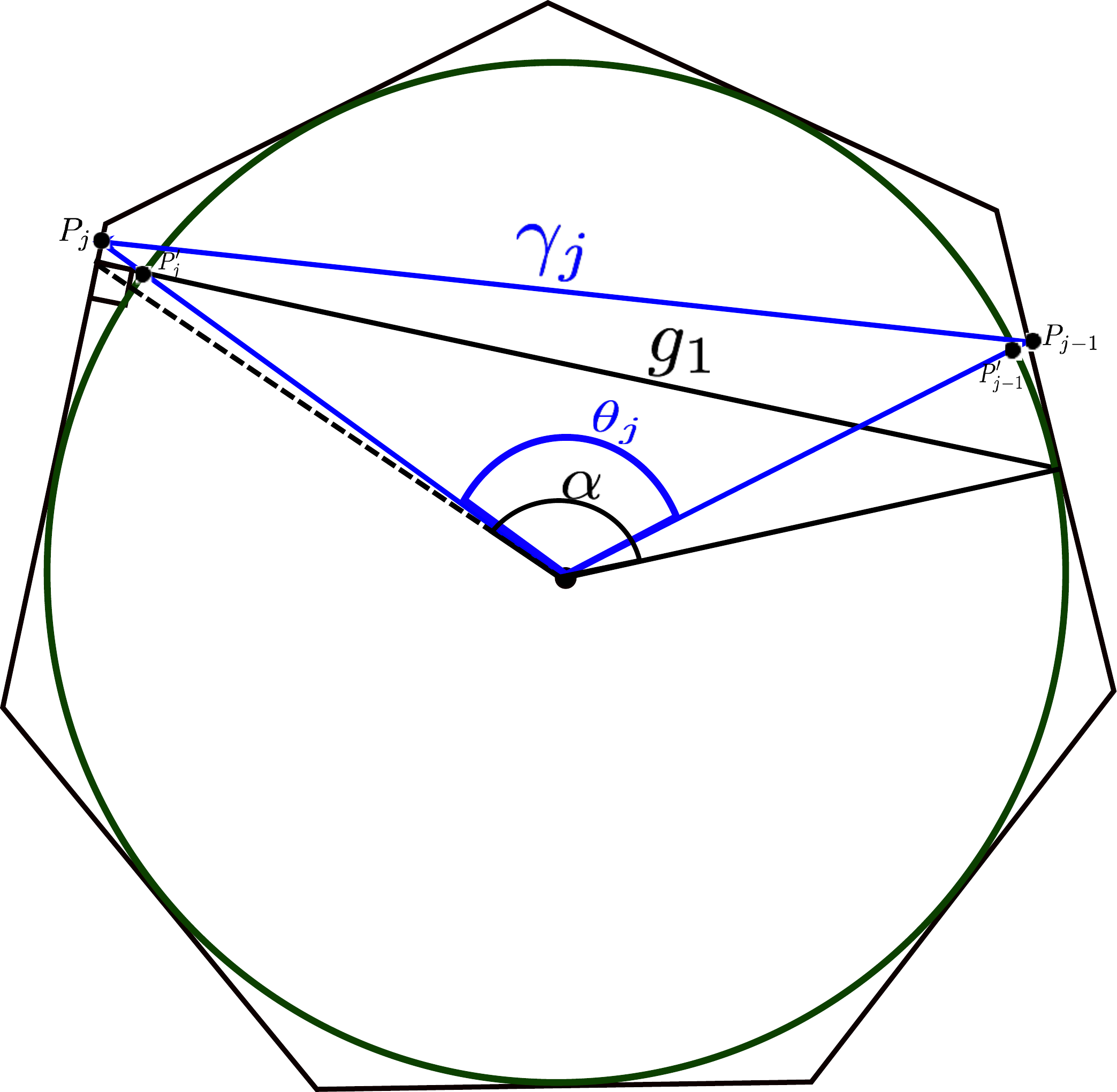}
\caption{$l(\gamma_j) < l(g_1) \Rightarrow  \theta_j < \alpha < \pi$} 
\label{VSP4}
\end{figure}

Since $s_j = n$ and since $g_1$ is the shortest path from the midpoint $M$ of an edge to the edge $n$ skip numbers away, $l(\gamma_j) < l(g_1) \Rightarrow j \neq 1$. Then $j > 1$, and using $s_j = n$ and $s_1 \leq n$, Proposition~\ref{skips} implies $s_{j - 1} = n - 1$, since $s_j$ is the first occurrence of skip number $n$. Thus, $\gamma_{j - 1}^\prime$ also intercepts a minor arc. We add one more chord intercepting a minor arc to the collection $\{\gamma_j^\prime, \gamma_{j - 1}^\prime\}$ as follows: $s_j + s_{j - 1} = 2n - 1$, so there are $2n + 1 - (2n - 1) = 2$ skip numbers between $P_j$, the end point of $\gamma_j$, and $P_{j -2}$, the start point of $\gamma_{j - 1}$. Thus, a line segment $s$ from $P_j^\prime$ to $P_{j - 2}^\prime$ intercepts a minor arc of $C$, since for the $n \geq 3$ case, $2 < n = \lfloor \frac{2n + 1}{2} \rfloor$.




Since $\gamma_{j + 1}^\prime \rightarrow \ldots \rightarrow \gamma_k^\prime \rightarrow \gamma_1^\prime \rightarrow \ldots \rightarrow \gamma_{j - 2}^\prime$ is a path from $P_j^\prime$ to $P_{j - 2}^\prime$, by the triangle inequality $\displaystyle \sum_{i = 1, i \not\in \{j, j - 1\}}^k l(\gamma_i^\prime) \geq l(s)$. Now, with the path of chords $\gamma_{j}^\prime \rightarrow \gamma_{j - 1}^\prime \rightarrow s$ along the circumference of $C$, we have reduced to the case with only minor arcs intercepted. Thus we have

\[
\sum_{i = 1}^k l(\gamma_i^\prime)  =  l(\gamma_j^\prime) + l(\gamma_{j - 1}^\prime)  + \sum_{i = 1, i \not\in \{j, j - 1\}}^k l(\gamma_i^\prime) \geq  l(\gamma_j^\prime) + l(\gamma_{j - 1}^\prime) + l(s) > l(g_1) + l(g_2)
\]

as desired. 

Note that all the arguments in this section can be easily adopted to show that the $V$-shaped geodesic is uniquely the shortest on $X_{2n + 1}$.


\subsection{Bounds in terms of area}
One can also bound the length of the shortest closed geodesic on a Riemannian 2-sphere in terms of the area. The best known bound is $L\leq 4 \sqrt{2} \sqrt{area}$ and is due to Rotman \cite{rotman2}. It is conjectured by Calabi and Croke that $L \leq \sqrt{12} \sqrt{area}$. We note that for the singular space $X_3$ we have $L\approx 1.9 \sqrt{area}$ and for $X_5$ we have $L\approx 2.7 \sqrt{area}$. As $n$ grows the doubled odd-gons converge to the doubled disk, and $L \to \frac{4\sqrt{2}}{\sqrt{\pi}} \sqrt{area} \approx 3.2 \sqrt{area}$.

\end{document}